\newtheorem{thm}{Theorem}[section]
\newtheorem{lemma}[thm]{Lemma}
\newtheorem{prop}[thm]{Proposition}
\newtheorem{cor}[thm]{Corollary}
\newtheorem{fact}[thm]{Fact}
\newtheorem{prob}[]{Problem}
\theoremstyle{remark}
\newtheorem{remark}[thm]{Remark}
\theoremstyle{definition}
\newtheorem{question}[thm]{Question}
\newtheorem{defi}[thm]{Definition}
\newtheorem{example}[thm]{Example}
\newcommand{\la}{\longrightarrow}
\newcommand{\ha}{\hookrightarrow}
\newcommand{\ov}{\overline}
\newcommand{\un}{\underline}
\newcommand{\im}{\operatorname{Im}}
\newcommand{\Div}{\operatorname{Div}}
\newcommand{\supp}{\operatorname{Supp}}
\newcommand{\Spec}{\operatorname{Spec}}
\newcommand{\Pic}{\operatorname{Pic}}
\newcommand{\Prin}{\operatorname{Prin}}
\newcommand{\Picphi}{\operatorname{Pic}_{\phi}}
\newcommand{\mdeg}{\underline{\operatorname{deg}}}
\def\L{\mathcal L}
\def\O{\mathcal O}
\def\X{\mathcal X}
\newcommand{\PP}{\mathbb{P}}
\newcommand{\Z}{\mathbb{Z}}
\def\md{\underline{d}}
\def\me{\underline{e}}
\def\mk{\underline{k}}
\def\mt{\underline{t}}
\def\mc{\underline{c}}
\newcommand{\MaG}{M^{\rm alg}(G)}
\newcommand{\Mgb}{\ov{M_g}}
\newcommand{\rmax}{r^{\rm{max}}}
\newcommand{\RX}{\rmax(X,\md)}
\newcommand{\rdel}{r(X,\delta)}
\newcommand{\ralg}{r^{\rm{alg}}}
\newcommand{\rgdel}{\ralg(G,\delta)}
\newcommand{\rGdel}{r_G(\delta)}
\newcommand{\hhG}{\widehat{G}}
\newcommand{\hhd}{\widehat{\md}}
\newcommand{\w}{\omega}
\newcommand{\meg}{\me^{\deg}}
\newcommand{\mdr}{\md_{{\operatorname{rk}}}}
\newcommand{\agd}{\ell_G (\md)}
\begin{document}
\title[Algebraic and combinatorial rank of divisors on graphs]{Algebraic and combinatorial rank of divisors on finite graphs}
  \author{Lucia Caporaso, Yoav Len, and Margarida Melo}
 \keywords{Graph, algebraic curve, divisor, line bundle, rank}
\subjclass[2000]{Primary 	14Hxx, 05Cxx }
 \maketitle

\begin{abstract}
We study the algebraic rank  of a divisor on a  graph, an invariant   defined 
using  divisors on  algebraic curves dual to the graph. We prove it satisfies  the Riemann-Roch formula, 
a specialization property, and the Clifford inequality. We prove that  it is at most equal to  the (usual) combinatorial rank,
and that equality holds in many  cases, though not in general.
\end{abstract}
 \tableofcontents

\section{Introduction}

Since recent years, a   lively   trend of research is  studying the
interplay between   the combinatorial and   algebro-geometric aspects of the theory of algebraic curves;
this has led to interesting progress both in algebraic geometry and graph theory. The goal of this paper is to contribute to this progress by investigating the connection between the notion of combinatorial rank of divisors on graphs, and the notion of rank of      Cartier divisors on an algebraic curve. 

Loosely speaking, our main result is that the combinatorial rank of a divisor on a graph
(a computer-computable quantity  bounded above by the  degree)  is a fitting uniform
upper bound on the dimension of linear series on curves (a hard to compute quantity, unbounded regardless of the  degree).
To be  more precise, we   need   some context.

 In the theory of algebraic curves, combinatorial aspects naturally appear when dealing with all curves simultaneously, as points of an algebraic variety.
Indeed,
a typical phenomenon in algebraic geometry is   that the set of equivalence classes of  varieties with given discrete invariants
 is itself an algebraic variety, whose geometric properties reflect those of the varieties it parametrizes.
 The case we should here keep  in mind is the space $\Mgb$, of all  connected  nodal curves of genus $g$ up to stable equivalence; it  is a  complete variety  containing, as a dense open subset, the space of isomorphism classes of smooth curves.
$\Mgb$ has been a central object of study for a long time, and it has been successfully used to  study
the geometry of algebraic curves. Several   topics in this field, among which many open problems,
concern projective realizations of abstract curves, i.e.  the theory of line bundles  (or  Cartier divisors)  and linear series.

 A systematic study of these matters requires    combinatorial methods to handle singular curves.
Moreover,  several   questions  are successfully answered by  degeneration techniques  (specializing a smooth curve to a singular one)
where combinatorial aspects are essential;
examples of this are the Griffiths-Harris proof of the Brill-Noether theorem, in \cite{GH}, or the Kontsevich,  and others,  recursive formulas  enumerating curves on surfaces, see \cite{KM} or \cite{CH}.

 In fact, since the first appearances of $\Mgb$, as in 
 the seminal paper
 \cite{DM}, one sees  associated to every nodal curve its {\it dual graph},
having as vertices the irreducible components of the curve, and as edges the nodes of the curve; moreover, every vertex of the dual graph is given a {\it weight}, equal to the geometric genus of the component it represents.
For any  (weighted) graph $G$ we denote by $\MaG$
the set of isomorphism classes of curves dual to $G$ (i.e. having $G$ as dual graph). Then we have
$$
\Mgb =(\sqcup \MaG)/\sim
$$
where the union is over all  connected graphs of genus $g$, and ``$\sim$" denotes stable equivalence
(which we don't  define here, see \cite{HMo}).

The dual   graph is a key tool to deal with the combinatorial aspects mentioned above, especially
in the theory of
divisors and line bundles, when studying   N\'eron models of Jacobians,  Picard functors and compactified Jacobians, or degenerations of linear series.

More recently, and independently of the picture we just described,     a purely combinatorial  theory of divisors  and linear series on  graphs was being developed   in a     different framework; see \cite{BdlHN} and \cite{BNRR}.
The discovery that this graph-theoretic  theory  fits  in  well with the algebro-geometric set-up
 came somewhat as a surprise.
To begin with, the group, $\Div(G)$,  of divisors  on a graph $G$ is the free abelian group on its vertices.
The  connection to line bundles on curves is simple:   given a curve   $X$   dual to $G$,
the multidegree of a line bundle   on $X$
is naturally a divisor on $G$, so we have a map  $\Pic(X)\to \Div(G)$.

Such  developments in graph-theory provide   a  fertile ground to extract and study the combinatorial aspects of the theory of algebraic curves;  a remarkable example of this is the recent proof of the above mentioned Brill-Noether theorem, given in \cite{CDPR}.

In this spirit, as mentioned at the outset, the goal of this paper is to interpret  the  combinatorial  rank, $r_G(\md)$,  of a divisor, $\md$,  on a graph $G$
(as defined in \cite{BNRR} and in \cite{AC})
by  the theory of algebraic curves. We do that  by studying another invariant,
the  {\it algebraic rank}
$$
\ralg (G,\md)
$$ of the divisor $\md$ ,
defined in a completely different fashion, using   algebro-geometric notions.   

 In algebraic geometry,  the notion  corresponding to the  combinatorial rank  is the
rank of a line bundle, i.e. the dimension of the space of its global sections diminished by one.
Now,  the algebraic rank of a divisor on  a graph $G$ should be thought of as a uniform  ``sensible"  upper bound
on the rank of any line bundle  on any curve in $\MaG$ having  multidegree equal to the given divisor on $G$.
The word ``sensible" signals the fact that we need the algebraic rank to be constant in   equivalence classes, hence the precise definition needs some  care
 (see section~\ref{ssalgrank} for details).
However, the following simple  case is clear and gives the right idea: let  $G$ be just one vertex of weight $g$
(no edges),
  then $\MaG$ parametrizes smooth curves of genus $g$,
and a divisor on $G$ is  an integer, $d$; if  $d$ is non negative and at most $2g-2$, the  theorem of Clifford  yields  that the algebraic rank of $d$ is equal to
$\lfloor d/2\rfloor$, and the bound  is achieved exactly by certain line bundles on hyperelliptic curves.
 
 The algebraic rank was introduced in \cite{Cjoe}, where it was conjectured to coincide with the combinatorial rank.
Although, as we will here prove,  this conjecture is true in a large number of cases,   it fails in general.
For convenience, we will now assemble together our results concerning the relation between combinatorial and algebraic rank.

\noindent
{\bf{Comparison  of combinatorial and algebraic rank: results.}}
{\it  Let $G$ be a  (finite, connected, weighted) graph of genus $g$, and let $\md$ be a divisor on $G$. Then 
 we have 
\begin{equation}
 \label{mainresult}
 \ralg (G,\md) \leq r_G(\md).\end{equation}
 Moreover, equality holds in the
 following cases.
\begin{enumerate}[(a)]
\item $r_G(\md)\leq 0$;
\item
\label{22}
$G$ is a binary graph (see Example~\ref{spex});
\item
$G$ is loopless, weightless and $\md$ is rank-explicit (see Definition~\ref{rex}).
\end{enumerate}}
Some recent results of \cite{KY2} establish the inequality opposite to  \eqref{mainresult} in some cases; hence, combining with our results, we have that  equality holds in \eqref{mainresult} for   non-hyperelliptic graphs if $g=3$, and for all hyperelliptic graphs in characteristic other than $2$; see Corollary~\ref{KY}. For completeness, we mention that equality in \eqref{mainresult}   holds
 when  $g\leq 2$ (assuming $G$ stable for $g=2$), and if    $d\geq 2g-2$ or $d\leq 0$; see \cite{Cjoe}.

As we said, we have some  counterexamples 
showing that   equality can  fail  in \eqref{mainresult}; namely
Example~\ref{wbinex} shows that the hypothesis that $G$ is weightless cannot be removed from \eqref{22}, and
Example~\ref{counterex1} shows that  equality can fail  for weightless graphs with three vertices.
We do not know how to characterize cases where we have a strict inequality in \eqref{mainresult}, but we believe that
it would be quite an interesting problem.
  
Here is an outline of  the paper. We start with a  study  of the  algebraic rank,  establishing  some basic properties:  in Section~\ref{arsec} we prove that it satisfies the Riemann-Roch formula,
 along with the appropriate version of Baker's specialization lemma; both results are proved using algebraic geometry, with no relation to the analogous facts for the combinatorial rank.

In Section~\ref{crsec} we concentrate on  the combinatorial rank   and establish ways to compute it, or  to bound it from above.
We use the theory of reduced divisors
from \cite{BNRR}, implementing it  by constructing
what we call the {\it Dhar  decomposition} of the vertex set of a graph, a  particularly useful tool for our goals.

Using the material of the previous parts,  in Section~\ref{insec} we
 prove that
 the algebraic rank is always at most equal to   the combinatorial rank (Theorem~\ref{rankthm}).
 As a   consequence, the algebraic rank   satisfies the Clifford inequality; this fact could not  be proved using algebro-geometric methods (like the Riemann-Roch formula above), as the Clifford inequality   fails for reducible curves.

The last section focusses on the  opposite inequality: when is the algebraic rank  at least equal to
the combinatorial rank? We now must  bound the algebraic rank from below, hence the new issue   is to find singular curves which play for us the role of hyperelliptic curves (as in the above example),
  i.e. curves on which line bundles of a fixed multidegree tend to have the highest possible rank.
  We find     such {\it special curves} in every set $\MaG$ when $G$ is  a weightless graph,     and  we use them to prove  that, for   so-called  {\it rank explicit}   divisors on weightless graphs, the combinatorial rank and the algebraic rank are equal (Theorem~\ref{rexthm}).

\

\noindent
 {\it Acknowledgments.}  We wish to thank the referee for helpful comments, and Sam Payne for introducing the second   author
to the first,  thus making  our collaboration   possible.

 The third author was partially supported by CMUC, funded by the European Regional Development Fund through the program COMPETE and by FCT under the project
PEst-C/MAT/UI0324/2013 and grants PTDC/MAT/111332/2009, PTDC/MAT-GEO/0675/2012 and SFRH/BPD/90323/2012.

\begin{table}[htbp]
\begin{center}
{\sc Index of non standard notations}

\

\begin{tabular}{l p{12cm}  }
$\md$ & divisor on a graph $G$.\\
$\me$ & effective divisor on $G$.\\
$\delta$ & divisor class on $G$, usually the class of $\md$.\\
$\mt_Z$ & principal divisor corresponding to a set of vertices $Z$; see \eqref{tZdef}.\\
$|\md |$ & degree of   $\md$.\\
$r_G(*)$ & combinatorial rank of $*$.\\
$\ralg(G,*)$ & algebraic rank of $*$; see Def.~\ref{ardef}.\\
$\widehat{G}$ & loopless weightless graph associated to   $G$; see Sec.~\ref{wsec}.\\
$g(v)$ &  sum of weight and number of loops at $v$; see Sec.~\ref{wsec}.\\
$R_v$ &  subgraph of $\widehat{G}$ consisting of $v$ and the $g(v)$ attached cycles; see \eqref{Rv}.\\
$e^g$ & $e+\min\{e,g\}$, where $e$ and $g$ are non-negative integers.\\
$\me^{\deg}$ & divisor on $G$ satisfying $\me^{\deg}(v) = \me(v)^{g(v)}$; see Def~\ref{defmeg}.\\ 
$d_g$ & $\max\{d-g,\lfloor\frac{d}{2}\rfloor\}$, where $d$ and $g$ are non-negative integers.\\
$\md_{\rm{rk}}$ & divisor on $G$ satisfying $\md_{\rm{rk}}(v) = \md(v)_{g(v)}$; see Def~\ref{defmdr}.\\
$\ell_G(\md)$ &  See \eqref{ad}  for $G$ weightless, and \eqref{adw} in general.  \\
\end{tabular}

\end{center}
\end{table}

 \section{Algebraic rank of divisors on graphs}
 \label{arsec}
\subsection{Preliminaries}
Throughout the  paper  $G$
will denote
 a (vertex weighted) finite graph, $V(G)$ the  set of its vertices, $E(G)$ the  set of its  edges and $\Div(G)=\Z^{V(G)}$ the group of its divisors; when no ambiguity is likely to occur we write $V=V(G)$ and $E=E(G)$.
The weight function of $G$ is written $\omega:V\to \Z_{\geq 0}$; if $\w(v)=0$ for every $v\in V$ we say that $G$ is weightless.

  The {\it genus} of  a connected graph  $G$ is
$$g(G):=\sum_{v\in V}\omega(v)+ |E|-|V|+ 1;$$
thus, if $G$ is weightless, $g(G)$ is its first Betti number.
If $G$ has $c$ connected components, $G_1,\ldots, G_c$,   we set $g(G)=\sum_1^c g(G_i)+1-c$.
Our graphs   will always  be connected,
unless otherwise specified.

Elements in $\Div(G)$ are usually denoted by underlined lower-case letters, for example we write $\md\in \Div(G)$ and  $\md=
\{\md(v),\  \forall  v\in V\}$ with $\md(v)\in \Z$. We write $|\md|:=\sum_{v\in V}\md(v)$ for the ``degree" of a divisor $\md$.

We write $\md\geq 0$ for an effective divisor (i.e. such that $\md(v)\geq 0$ for every $v\in V$); if $\md$ and $\me$ are effective,
and $\md-\me$ is also effective, we say that  ``$\md$ contains $\me$".

We will usually abuse notation  so that for a  set $Z\subset V$ of vertices, we also denote by $Z$
the divisor $\sum_{v\in Z}v\in \Div(G)$ (or, with the above notation, the divisor $\md$ such that $\md(v)=1$ for all $v\in Z$, and $\md(v)=0$
otherwise).

The group $\Div(G)$ is endowed with an intersection product associating to $\md_1,\md_2\in \Div(G)$ an integer, written $\md_1\cdot \md_2$.
If $\md_i=v_i$ with $v_i\in V$ and $v_1\neq v_2$ we set  $v_1\cdot v_2$ equal to the number of edges joining $v_1$ with $v_2$, whereas
$v_1\cdot v_1=-\sum _{v\in V\smallsetminus \{v_1\}}v\cdot v_1$.
The linear extension to  the entire $\Div(G)$ gives our intersection product.

The subgroup $\Prin (G)\subset \Div (G)$ of  {\it principal}  divisors is generated by divisors of the form
$\mt_Z$, for all $Z\subset V$; these principal divisors are defined so that
for any $v\in V$ we have

\begin{equation}
\label{tZdef}
\mt_Z(v) =\left\{ \begin{array}{ll}
v\cdot Z  \  &\text{ if }   v\not\in Z\\
-v\cdot Z^c \  &\text{ if }   v\in Z\\
\end{array}\right.
\end{equation}
where $Z^c:=V\smallsetminus Z$.
\begin{remark} \label{tZrk}
 Let $\mt\in \Prin(G)$ be non-trivial. Then there exists a partition $V=Z_0\sqcup\ldots \sqcup Z_m$,
 with $Z_0$ and $Z_m$ non-empty,
 such that $\mt=\sum_{i=0}^mi\mt_{Z_i}$.
Indeed,   by definition, 
 $\mt=\mt_{Y_1}+...+t_{Y_k},$  where each $Y_j$ is a set of vertices. For each $a\geq 0$, let $Y'_a$ be the set of vertices that are contained in $a$ different such sets ($Y'_a$ may be empty). Then the sets $Y'_a$ are a disjoint cover of $V$, and $\mt = 0\cdot \mt_{Y'_0}+\ldots+k\cdot\mt_{Y'_k}$.
Let $b$ be the first integer so that $Y'_b$ is non-empty. Since $\sum \mt_{Y'_i}=\mt_V=0$, by subtracting $b$ copies of it from $\mt$, and defining $Z_i = Y'_{i+b}$, we are done. 
 
  This implies that  we have
  $$
 \mt_{|Z_m}\leq (\mt_{Z_m})_{|Z_m}.
 $$
Indeed, pick $v\in Z_m$, we have $ \mt_{Z_m}(v)=-Z_m^c\cdot v$; on the other hand
 $$
 \mt(v)=\sum_{i=0}^{m-1}iZ_i\cdot v - mZ_m^c\cdot v\leq (m-1)\sum_{i=0}^{m-1}Z_i\cdot v -mZ_m^c\cdot v=-Z_m^c\cdot v.
 $$
\end{remark}

We set $\Pic(G)= \Div (G)/\Prin (G)$; we say that two divisors are   {\it equivalent} if their difference is in
$\Prin(G)$, and we refer to $\Pic(G)$ as
the group of  equivalence classes of divisors; for
 $\md\in \Div (G)$ we denote its class in $\Pic (G)$ by $\delta=[\md]$. Since   equivalent divisors have the same degree
 we write $|\delta|:=|\md|$.
 For an integer $k$ we write $\Div^k(G)$, and $\Pic^k(G)$, for the set of divisors, or divisor classes, of degree $k$.

For any  divisor $\md\in \Div(G)$ we have its {\it combinatorial rank}, denoted by $r_G(\md)$ and defined
as in \cite{BNRR} and \cite{AC} (see subsection~\ref{wsec} for details). For now, we just recall that
$r_G(\md)$ is an integer such that
$$
-1\leq r_G(\md) \leq \max\{-1, |\md|\}.
$$
Equivalent divisors have the same  combinatorial rank, hence we set
$$
r_{G}(\delta):=r_G(\md).
$$

 \subsection{The algebraic rank}
 \label{ssalgrank}
In this paper, unless otherwise specified, the word {\it curve} stands for   one-dimensional scheme,   reduced and projective over an algebraically closed field $k$,
and having only nodal singularities. Let $X$ be a curve; its {\it dual graph}, $G$,
is   defined as follows. $V(G)$ is  the set of   irreducible components of $X$;
$E(G)$ is the set of   nodes of $X$, with one edge joining two vertices if the corresponding node is at the intersection of the two corresponding components (a loop based at a vertex $v$ corresponds to a node of the irreducible component   corresponding to $v$);
 the weight  of $v\in V$ is   the genus of the desingularization of the corresponding component.
One easily checks that the (arithmetic) genus of $X$ is equal to the genus of its dual graph.
We recall that it is well known that the theorem of Riemann-Roch holds for any such curve $X$, but the Clifford inequality only holds  when  $X$ is irreducible; see \cite {Ccliff} for more details.

The irreducible component decomposition of $X$ is
denoted as follows
$$
X=\cup_{v\in V}C_v.
$$

$\Pic (X)$ is the group of isomorphism classes of line bundles on $X$; for $L\in \Pic(X)$
we denote by
$\mdeg L=\{\deg_{C_v}L,\  \forall v\in V\}$
its  {\it multidegree}. Now, $\mdeg L$ can be viewed as a divisor on $G$, by setting
$\mdeg L(v)=\deg_{C_v}L$;
 hence we have a surjective group homomorphism
$$
\Pic(X)\la \Div(G); \quad \quad L \mapsto \mdeg L.
$$
For $\md \in\Div(G)$ we write $\Pic^{\md}(X)=\{L\in \Pic(X):\  \mdeg L = \md\}$.

As already mentioned, $\MaG$ denotes the set of  isomorphism classes of curves dual to $G$.
Observe that $\MaG$ is never empty.

For any  $X\in \MaG$ and  any $\md \in \Div(G)$ we define
$$
\RX:=\max \{r(X,L),\  \    \forall L\in \Pic^{\md}(X)\}
$$
where $r(X,L)=\dim H^0(X,L)-1$. Note the following simple   fact:
\begin{remark}
\label{monotone}
Let  $\md'\in \Div (G)$; if $\md'\geq \md$ then $\rmax(X,\md ')\geq \RX$.
\end{remark}
Varying $\md$ in its equivalence class  $\delta\in \Pic (G)$ we can define
$$
\rdel:=\min \{\RX,\  \  \  \forall \md\in \delta\}.
$$
(By contrast,  $\max \{\RX,\   \forall \md\in \delta\}=+\infty$, for every reducible   curve $X$).
\begin{defi}
\label{ardef}
For any divisor class $\delta\in \Pic(G)$ of a graph $G$ we set
$$
\rgdel:=\max \{\rdel,\  \  \  \forall X\in \MaG\},
$$
and for every representative $\md\in \Div(G)$ for $\delta$
$$
\ralg (G,\md):=\rgdel.
$$
We refer to $\rgdel$ and   $\ralg (G,\md)$  as the {\it algebraic rank} of $\delta$ and $\md$.
 \end{defi}
\begin{example}\label{irrex}
 If $G$ has only one vertex,  any curve $X\in \MaG$ is irreducible 
 (singular if $G$ has loops); then
  every  class  in $\Pic(G)$ contains exactly one element, and  we naturally identify $\Pic(G)=\Div(G)=\Z$.
 Then, for any $d\in \Div (G)$
  by the theorems of Riemann-Roch and Clifford,
  \begin{displaymath}
\ralg(G,d)=\left\{ \begin{array}{ll}
-1  \  &\text{ if }   d<0\\
\lfloor d/2\rfloor \  &\text{ if }   0\leq d\leq 2g-2\\
d-g \  &\text{ if }    d\geq 2g-1.\\
\end{array}\right.
\end{displaymath}
\end{example}
The following natural problem arises
\begin{prob}
\label{toy2}
Let $G$ be a graph  and $\delta\in \Pic (G)$.
Is $$
\rgdel=r_{G}(\delta)?
$$
\end{prob}
In \cite{Cjoe}  the answer to this problem is shown to be positive in a series of  cases, and it is conjecture that it be always the case.
   As already  mentioned, we
  shall prove   that  we have  $\rgdel\leq r_{G}(\delta)$, but equality does fail in some cases.

\begin{remark}
 In this paper we do not consider metric graphs. Nonetheless, we believe   it would be very interesting to  have a generalization  of the  algebraic rank to divisors on metric graphs in a way that reflects the algebro-geometric nature of the graph, which we can loosely describe as follows.
To a metric graph $\Gamma=(G,\ell)$ there corresponds
 a set of   (equivalence classes of) nodal curves, $\X\to \Spec R$, where  $R$ is a valuation ring with algebraically closed residue field; the dual graph of the closed fiber   is $G$, and the metric, $\ell$, represents the  geometry of $\X$ locally at the closed fiber.
 
 We refer to  
  \cite{AB} for a   treatment of the correspondence between   algebraic and combinatorial aspects  of the theory using metric graphs
(and more generally,  ``metrized complexes") instead of finite graphs.

  \end{remark}
 
\subsection{Riemann-Roch for  the algebraic rank}\label{S:properties}
We shall now  prove that the algebraic rank, exactly as the combinatorial rank,
 satisfies a Riemann-Roch formula; the proof  will be   a  consequence of  Riemann-Roch for curves.

As we said, the Clifford inequality also holds for the algebraic rank,   but its proof requires more  work  and it is quite different as  it follows from the Clifford inequality for graphs (indeed, the Clifford inequality  fails for reducible  curves!); 
see Proposition~\ref{cliffcor}.

We denote by $\mk_G$ the canonical divisor of a graph $G$, and by $K_X$ the dualizing line bundle of a curve $X$.
Recall that $\mk_G$ is defined as follows
$$
\mk_G(v) = \text{val}(v) + 2\w(v) - 2,
$$
where $\text{val}(v)$ denotes the valency of the vertex $v$.
 We have $\mdeg K_X=\mk_G$ for every  $X\in \MaG$.
\begin{prop}[Riemann-Roch]
\label{RRalg}
Let $G$ be a graph of genus $g$, $\md$ a divisor of degree $d$ on $G$, and $X\in \MaG$. 
Then, setting  $\delta=[\md]$, the following identities hold.

\begin{enumerate}[(a)]
 \item
\label{RRX}
$\rmax(X,\md)-\rmax(X,\mk_G-\md)=d-g+1$;\\
 \item
\label{RRdelta}
$r(X,\delta)-r(X,\mk_G-\delta)=d-g+1$;\\

 \item
 \label{RRG}
$\ralg(G,[\md])-\ralg(G,[\mk_G-\md])=d-g+1.$

\end{enumerate}
\end{prop}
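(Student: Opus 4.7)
The plan is a straightforward chain of reductions: prove (a) from the classical Riemann--Roch theorem on the (possibly reducible nodal) curve $X$, then derive (b) from (a) by taking minima over the equivalence class, and finally derive (c) from (b) by taking maxima over $\MaG$. The only geometric input is Riemann--Roch for $X$; the rest is a careful bookkeeping of the minima and maxima that define $\ralg$.

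For part (a), I would first recall that for every line bundle $L$ on $X$ the classical Riemann--Roch formula gives
\[
r(X,L)-r(X,K_X\otimes L^{-1})=\deg L-g+1.
\]
Next, observe that $\mdeg K_X=\mk_G$ for every $X\in \MaG$, so the map $L\mapsto K_X\otimes L^{-1}$ is a bijection from $\Pic^{\md}(X)$ onto $\Pic^{\mk_G-\md}(X)$. Since $\deg L=|\md|=d$, this bijection shifts the rank by the constant $d-g+1$. Taking the maximum over $\Pic^{\md}(X)$ on both sides therefore yields
\[
\rmax(X,\md)=\rmax(X,\mk_G-\md)+(d-g+1),
\]
which is (a).

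For part (b), note that $\md'\mapsto \mk_G-\md'$ is a bijection between the representatives of $\delta$ and those of $[\mk_G-\md]$ (since principal divisors go to principal divisors via negation). All $\md'\in\delta$ have $|\md'|=d$, so (a) applied to each representative gives
\[
\rmax(X,\md')=\rmax(X,\mk_G-\md')+(d-g+1).
\]
Taking the minimum over $\md'\in\delta$ (which corresponds, via the bijection, to taking the minimum over representatives of $[\mk_G-\md]$) produces (b).

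For part (c), I apply (b) pointwise on $\MaG$: for each $X\in\MaG$,
\[
r(X,\delta)=r(X,[\mk_G-\md])+(d-g+1).
\]
Since $\MaG$ is nonempty and the shift is independent of $X$, taking the maximum over $X\in\MaG$ gives (c). No step is really hard; the only point that requires a little care is the consistency of the bijections between representatives of the classes $\delta$ and $[\mk_G]-\delta$ (and of $\Pic^{\md}(X)$ with $\Pic^{\mk_G-\md}(X)$) with the min/max operations — but this is immediate because the shift $d-g+1$ is constant along each class.
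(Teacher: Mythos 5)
Your proposal is correct and follows essentially the same route as the paper: use Riemann--Roch on $X$ together with the duality bijections $L\mapsto K_X\otimes L^{-1}$, $\md'\mapsto \mk_G-\md'$, and $X$-by-$X$ comparison, at the three successive levels of max/min. The only stylistic difference is that the paper verifies at each level that optimizers correspond to optimizers by a contradiction argument, whereas you invoke directly that a constant shift under a bijection commutes with taking maxima and minima --- the same content, slightly more streamlined.
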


\begin{proof}
We begin by introducing some notation.
For $L\in \Pic^{\md}(X)$, set $L^*=K_XL^{-1}$, so that $L^{**}=L.$ 
Similarly, we set $\md^*=\mdeg L^*=\mk_G-\md$ and  $d^*=\deg\md^*=2g-2-d$, next  $\delta^*:=[\md^*]$
(this is well defined, as  $\md\sim \me$  implies $\md^*\sim \me^*$ ).  We have $\delta^*\in \Pic^{\md^*}(G)$ and $\delta^{**}=\delta$ (as $\md^{**}=\md$).  

Notice that the correspondence $L\mapsto L^*$ is a bijection between $\Pic^{\md}(X)$ and $\Pic^{\md^*}(X)$. Similarly $\md\mapsto \md^*$ is a bijection between the representatives of $\delta$ and those of $\delta^*$, and   $\delta\mapsto \delta^*$ is a bijection   between  $\Pic^{d}(G)$ and $ \Pic^{d^*}(G)$.

Let $X\in \MaG$ and $L\in \Pic^{\md}(X)$. We claim  the following:
\begin{equation}
\label{rmax*}
\rmax(X,\md)=r(X,L) \  \  \Leftrightarrow \  \  \rmax(X,\md^*)=r(X,L^*).
\end{equation}
In other words, whenever $L$ is a line bundle realizing $\rmax(X,\md)$, its dual $L^*$ will   realize $\rmax(X,\md^*)$.
By the algebro-geometric Riemann-Roch  applied to $L$ on $X$ it is clear that \eqref{rmax*} implies \eqref{RRX}.

By the bijection described above, it suffices to prove only  one implication of  \eqref{rmax*}.
So assume $\rmax(X,\md)=r(X,L)$. By contradiction, suppose $r(X,L^*)<\rmax(X,\md^*)$, and let $M^*\in \Pic^{\md^*}(X)$ be such that $r(X,M^*)=\rmax(X,\md^*)$.
Now by Riemann-Roch for $X$ we have
$$
r(X,L)=r(X,L^*)+d-g+1<r(X,M^*)+d-g+1=r(X,M)
$$
hence $\rmax(X,\md)=r(X,L)<r(X,M)$, which is impossible as $M\in \Pic^{\md}(X)$.
\eqref{rmax*} is thus proved, and  \eqref{RRX} with it.

From   \eqref{RRX}, to prove \eqref{RRdelta} it suffices to show the  following:
\begin{equation}
\label{rmax*2}\rmax(X,\md)=r(X,\delta)\  \  \Leftrightarrow \  \   \rmax(X,\md^*)=r(X,\delta^*).
\end{equation}
As before, we need only prove one implication, so assume $
\rmax(X,\md)=r(X,\delta)$  and let $L\in \Pic^{\md}(X)$ be such that
$r(X,L)=\rmax(X,\md)$. By \eqref{rmax*} we have
$r(X,L^*)=\rmax(X,\md^*)$, so it suffices to
  prove that
$r(X,L^*)=r(X,\delta^*)$.
By contradiction, suppose this is not the case. Then there exists $\me^*\in \delta^*$ and $N^*\in \Pic^{\me^*}(X)$ such that
$$
r(X,L^*)>r(X,N^*)=\rmax(X,\me^*).
$$
By Riemann-Roch on $X$ we have
$$
r(X,L)=r(X,L^*)+d-g+1>r(X,N^*)+d-g+1=r(X,N).
$$
By \eqref{rmax*}   we have
$$r(X,N)=\rmax(X,\me)\geq \rmax(X,\md)=r(X,L),$$
a contradiction with the previous inequality; \eqref{rmax*2} and \eqref{RRdelta} are proved.

Finally,   let  $L\in \Pic^{\md}(X)$   be such that
$r(X,L)=\rgdel$, and let us prove that $r(X,L^*)=\ralg(G,\delta^*)$.
By Riemann-Roch on $X$ this  will imply  \eqref{RRG}.

As $r(X,L)=\rmax(X,\md)=r(X,\delta)$   by  \eqref{rmax*} and \eqref{rmax*2} we have
$r(X,L^*)=\rmax(X,\md^*)= r(X,\delta^*)$. By contradiction, suppose there exists a curve  $Y\in \MaG$ and a line bundle $P^*\in \Pic^{\me^*}(Y)$ with $\me^*\in \delta^*$ such that
$$
r(X,L^*)<\ralg(G,\delta^*) = r(Y,P^*)=\rmax(Y, \me^*).
$$
 Arguing as before we get
$$
r(X,L)=r(X,L^*)+d-g+1<r(Y,P^*)+d-g+1=r(Y,P).
$$
Now claims \eqref{rmax*} and \eqref{rmax*2}  yield, as $\me\in \delta$,
$$r(Y,P)=
r(Y,\delta)\leq
\rgdel=r(X,L),$$
 contradicting  the preceding inequality.
The proof is complete.
\end{proof}
   \subsection{Specialization for  the algebraic rank}
We shall now prove a result analogous to    Baker  Specialization Lemma, established in \cite{bakersp},  stating that the algebraic rank of divisors varying in a family of curves cannot decrease when specializing to the dual graph of the special fiber.

We need some preliminaries.
Let $X$ be a connected  curve, and let
  $\phi:\X \to (B,b_0)$ be a regular one-parameter
smoothing of    $X$, i.e.
$\X$ is  a regular  2-dimensional variety, $B$ is a regular  1-dimensional variety
with a marked point $b_0\in B$,
 $\phi$ is a fibration in curves such that the fiber over $b_0$ is  $X$ and the fibers over the other points of $B$ are smooth projective curves.
The relative Picard scheme of $\phi$ is written $\Picphi\to B$, so that the fiber of  $\Picphi$ over a point $b\in B$ is the Picard scheme
of the curve $X_b:=\phi^{-1}(b)$. The set of sections of $\Picphi\to B$  is denoted by $\Picphi(B)$;
so, an element $\L\in \Picphi(B)$ gives a line bundle $\L (b)\in \Pic(X_b)$ for every $b\in B$.

Let $L_0$ and $L'_0$ be two line bundles on $X$; we say that $L_0$ and  $L'_0$ are $\phi$-equivalent, and write
$L'_0\sim_{\phi} L_0$, if for some divisor $D$ on $\X$ entirely supported on $X$ we have
$$
L_0^{-1}\otimes L'_0= \O_\X(D)_{|X}.
$$
 For example, for $\L\in \Picphi(B)$  and $D\in \Div(X)$ as above, we can define $\L'\in \Picphi(B)$
 that assigns $\L(b)\otimes \O_\X(D)_{|X_b}$ to every $b\in B$. As  $\supp D\subset X$ we have
 $\L(b)=  \L'(b)$ for $b\neq b_0$, and $\L(b_0)\sim_{\phi}\L'(b_0)$.

 Finally, let $G$ be the dual graph of  $X$; for any $\phi:\X\to B$ as above we have a natural map (cf. subsection~\ref{ssalgrank})
 $$
\tau: \Picphi(B)\la \Div(G); \quad \quad \L\mapsto \mdeg \ \L(b_0).
 $$

\begin{lemma}[Specialization]
 \label{spe}
Let $\phi:\X \to B$ be a regular one-parameter
smoothing of  a connected  curve $X$.
Let $G$ be the  dual graph of $X$.
Then for every $\L\in \Picphi(B)$ there exists an open neighborhood $U\subset B$ of $b_0$ such that for every $b\in U\smallsetminus\{b_0\}$   we have
\begin{equation}
\label{mixed}
r(X_b, \L (b))\leq \ralg(G, \mdeg \ \L(b_0)).
\end{equation}
\end{lemma}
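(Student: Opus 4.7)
My plan is to combine upper semicontinuity of $h^0$ in the family with a well-chosen twist of $\L$ by a Cartier divisor supported on the special fiber, arranged so that the multidegree on $X$ moves to a representative of $\delta = [\mdeg \L(b_0)]$ minimizing $\rmax(X,-)$. Setting $\md_0 = \mdeg \L(b_0)$, since $X \in \MaG$ Definition~\ref{ardef} gives
$$\ralg(G,\delta) \geq r(X,\delta) = \min_{\md' \in \delta} \rmax(X,\md');$$
hence it is enough to bound $r(X_b, \L(b))$ by $\rmax(X, \md^*)$ for a chosen minimizer $\md^* \in \delta$.

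To construct the twist, I would pick $\md^* \in \delta$ with $\rmax(X,\md^*) = r(X,\delta)$, write the difference as a combination of principal generators $\md^* - \md_0 = \sum_Z n_Z \mt_Z$, and form $D = \sum_Z n_Z \bigl(\sum_{v \in Z} C_v\bigr)$, a Cartier divisor on $\X$ supported entirely on $X$ (it is Cartier because $\X$ is regular). Since $C_v \cdot C_w = v \cdot w$ on $\X$ by the very definition of the dual graph, $\O_\X(C_v)|_X$ has multidegree $\mt_{\{v\}}$, so by linearity $\mdeg(\O_\X(D)|_X) = \md^* - \md_0$; consequently $\L^* := \L \otimes \O_\X(D) \in \Picphi(B)$ satisfies $\mdeg \L^*(b_0) = \md^*$.

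The key geometric observation I would then exploit is that for $b \neq b_0$ the support of $D$ is contained in $X$, hence disjoint from $X_b$; so the canonical rational section of $\O_\X(D)$ restricts to a regular nowhere-vanishing section of $\O_\X(D)|_{X_b}$, yielding $\O_\X(D)|_{X_b} \cong \O_{X_b}$ and therefore $\L^*(b) \cong \L(b)$ for every $b \neq b_0$. By upper semicontinuity of $h^0$ applied to the family $\L^*$, there exists an open neighborhood $U \subset B$ of $b_0$ with $r(X_b, \L^*(b)) \leq r(X, \L^*(b_0)) \leq \rmax(X, \md^*)$ for every $b \in U$. Chaining inequalities, for $b \in U \setminus \{b_0\}$ one gets
$$r(X_b, \L(b)) = r(X_b, \L^*(b)) \leq r(X, \L^*(b_0)) \leq \rmax(X, \md^*) = r(X, \delta) \leq \ralg(G, \delta),$$
which is exactly \eqref{mixed}.

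The main obstacle here is not computational but conceptual: it lies in recognizing why the twist is needed at all. A direct application of semicontinuity to $\L$ itself would only yield $r(X_b, \L(b)) \leq \rmax(X, \md_0)$, which can be strictly larger than $\ralg(G, \delta)$ (in fact $\sup_{\md \in \delta}\rmax(X,\md) = +\infty$ when $X$ is reducible, as noted after Definition~\ref{ardef}). The twist by $D$ precisely remedies this by moving the special fiber multidegree to the minimizer $\md^* \in \delta$ while leaving the isomorphism class of $\L(b)$ on every other fiber untouched, an effect that relies crucially on $D$ being supported on the deleted fiber.
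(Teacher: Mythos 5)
Your proposal is correct and follows essentially the same route as the paper: the paper's proof likewise twists $\L$ fiberwise by $\O_\X(D)$ for $D$ supported on the special fiber (this is exactly its notion of $\phi$-equivalence, set up just before the lemma), notes that this leaves $\L(b)$ unchanged for $b\neq b_0$ while sweeping out all representatives of $\delta$ on $X$, and then applies upper semicontinuity of $h^0$ to conclude $r(X_b,\L(b))\leq r(X,\delta)\leq \ralg(G,\delta)$. Your version merely makes explicit the choice of a minimizing representative $\md^*$ and the construction $D=\sum_Z n_Z\bigl(\sum_{v\in Z}C_v\bigr)$, which the paper leaves implicit.
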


\begin{proof}
We write $L_b: =\L (b)$,\  $L_0: =\L (b_0)$  and $\md:=\tau (\L)=\mdeg L_0$; we set $\delta\in \Pic (G)$ to be the class of $\md$.
By uppersemicontinuity of $h^0$ we have, for every $L'_0\in \Pic(X)$ such that $L'_0\sim_{\phi} L_0$
$$
r(X_b,L_b)\leq r(X, L'_0)
$$
for every $b$ in some neighborhood $U\subset B$ of $b_0$.
Hence, by definition of $\rmax$,
$$
r(X_b,L_b)\leq \rmax(X, \mdeg L'_0).
$$
As $L'_0$ varies in its $\phi$-class  the values of   $\mdeg  L'_0$ cover all the representatives of $\delta$,  therefore
we obtain
$$
r(X_b,L_b)\leq r(X, \delta).
$$
Since by definition $r(X, \delta)\leq \ralg(G, \delta)$, we are done.
\end{proof}
\subsection{Algebraic smoothability of divisors on graphs}
Let us now show how   Problem~\ref{toy2} is related to  the ``smoothability" problem for line bundles on curves, i.e.  the following general natural problem.
\begin{prob}
\label{pb1}
Let   $L$ be a line bundle on a curve $X$  such that  $r(X,L)=r$. Do  there exist a regular one-parameter smoothing $\phi:\X \to B$
 of $X$, and a section $\L\in \Pic_{\phi}(B)$, such that  $ \L(b_0)=L$ and $r(X_b, \L(b))= r$ for every $b\in B$?
\end{prob}
Easy cases when the answer is always   positive are $r<0$ (by upper-semicontinuity) and $\deg L\geq 2g-1$ (by Riemann-Roch).
But in other cases   this   problem is well known to get   very hard, even without the regularity assumption on $\phi$. We shall now try to simplify it  by   a combinatorial version.
\begin{defi}
 Let $G$ be a graph and $\md\in \Div(G)$; set $r_G(\md )=r$.
 We say that $\md$ is {\it (algebraically) smoothable} if there exist  $X\in \MaG$, a regular one-parameter smoothing $\phi:\X \to B$
 of $X$, and a section $\L\in \Pic_{\phi}(B)$,  such that  $\mdeg \ \L(b_0)=\md$ and $r(X_b, \L(b))= r$ for every $b\in B\smallsetminus b_0$
 (hence $r(X, \L(b_0))\geq r$ by upper-semicontinuity).
\end{defi}

   The following is the   combinatorial counterpart to  Problem~\ref{pb1}.
   \begin{prob}
\label{pb2}
 Let $\md\in \Div(G)$. Is $\md $ smoothable?
\end{prob}
As for  Problem~\ref{pb1},  if $r_G(\md)=-1$, or if $|\md|\geq 2g(G)-1$, then  $\md$   is smoothable

\begin{remark}
 If $\md$ is smoothable and $\md'\sim \md$, then $\md'$ is also smoothable
(by the regularity assumption on $\phi$). We shall say that a divisor class $\delta\in \Pic(G)$ is smoothable if so are its representatives.
\end{remark}

The next simple result connects  Problem~\ref{pb2} to Problem~\ref{toy2}.
\begin{prop}
 If $r_G(\delta)>\ralg (G,\delta)$ then $\delta$ is not smoothable
 (i.e. no representative for  $\delta$ is smoothable). \end{prop}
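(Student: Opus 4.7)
The plan is to prove the contrapositive: if $\delta$ is smoothable, then $r_G(\delta) \leq \ralg(G,\delta)$. This will follow almost immediately from the definition of smoothability combined with the Specialization Lemma (Lemma~\ref{spe}).

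First I would pick any representative $\md \in \delta$ so that, by the smoothability hypothesis together with the preceding remark (which ensures smoothability is a property of the class), there exist a curve $X \in \MaG$, a regular one-parameter smoothing $\phi:\X\to B$ of $X$, and a section $\L\in \Pic_\phi(B)$ with $\mdeg \L(b_0)=\md$ and $r(X_b, \L(b)) = r_G(\md) = r_G(\delta)$ for all $b \in B\smallsetminus\{b_0\}$.

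Next I would apply the Specialization Lemma to the family $\phi$ and the section $\L$. This gives an open neighborhood $U \subset B$ of $b_0$ such that for every $b \in U \smallsetminus \{b_0\}$ we have
\[
r(X_b,\L(b)) \leq \ralg(G,\mdeg \L(b_0)) = \ralg(G,\md) = \ralg(G,\delta).
\]
Since the left-hand side equals $r_G(\delta)$ by the smoothability hypothesis, we conclude $r_G(\delta) \leq \ralg(G,\delta)$, which is the contrapositive of what we wanted. Thus no obstacle is really expected: the entire content is already encoded in Lemma~\ref{spe}, and the only care needed is to notice that smoothability gives the equality $r(X_b,\L(b)) = r_G(\delta)$ on the generic fiber, which is exactly what is needed to transfer the inequality from $\ralg$ back to $r_G$.
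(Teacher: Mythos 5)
Your proof is correct. It differs from the paper's only in packaging: the paper argues by contradiction entirely on the central fiber, using the inequality $r(X,\L(b_0))\geq r$ (which is built into the definition of smoothability via upper-semicontinuity) to get $r\leq \rmax(X,\md)$ for \emph{every} representative $\md\in\delta$ on the \emph{same} curve $X$ (this uses the remark that equivalent divisors are smoothable through the same family, by twisting $\L$ by divisors supported on $X$), and then concludes $r\leq r(X,\delta)\leq \ralg(G,\delta)$. You instead pass to the general fibre and invoke Lemma~\ref{spe} as a black box; this is slightly more economical because the quantification over all representatives of $\delta$ is already carried out inside the proof of that lemma, so you only need one $\md$. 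Both arguments rest on the same upper-semicontinuity of $h^0$; the only point you leave implicit is that $U\smallsetminus\{b_0\}$ is non-empty, which is clear since $B$ is a one-dimensional variety over an algebraically closed field.
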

\begin{proof}
 Set $r=r_G(\delta)$. By contradiction, assume that $\delta$ is smoothable.
 Pick any representative $\md$ for $\delta$;
 let $\phi:\X\to B$ and $\L$ as in the above definition, with $\md = \mdeg\ \L(b_0)$.
 Then
 $$
 r\leq r(X, \L(b_0)) \leq  \rmax(X,\mdeg\ \L(b_0))= \rmax(X,\md).
 $$
 Since the above holds for every $\md$ in $\delta$, we get  $r(X,\delta)\geq r$. Therefore
 $$
 r \leq r(X,\delta)\leq \ralg(G,\delta),
 $$
 a contradiction.
\end{proof}
As we shall see in the examples at the end of Section~\ref{spesec}, non-smoothable classes do exist. We refer to the recent preprint \cite{Ca} for a study of closely related issues and further examples.

We  conclude this section asking  about   the converse of the above proposition, namely, what can be said about the smoothing problem when the algebraic and combinatorial rank coincide
(keeping in mind that by Theorem~\ref{rankthm} below, the algebraic rank is never greater than the combinatorial rank).

\begin{question}
Assume $r_G(\delta)=\ralg (G,\delta)$; is   $\delta$ smoothable?
\end{question}
A negative answer to this question is given in \cite[Example 3.2]{Yoav2}, where a non-smoothable divisor whose algebraic and combinatorial ranks are both $1$ is found.
\section{Computing   combinatorial ranks via reduced divisors}
\label{crsec}
\subsection{Combinatorial rank of divisors on   graphs}
\label{wsec}
Let $G$ be a graph. For each $v\in V(G)$ we denote by   $l (v)$ the number of loops adjacent to $v$; we  set
$$g(v):=\w(v)+l(v).$$
Let $\hhG$ be the weightless, loopless graph obtained from $G$ by gluing  to each vertex $v\in V(G)$ a number of loops equal to $\w(v)$, and then by inserting a vertex in every loop.
Denote by $z_v^1,\dots,z_v^{g(v)}$ the vertices in $V(\hhG)\smallsetminus V(G)$ adjacent to $v$, and  by $R_v$ the complete subgraph of $\hhG$ whose vertices are $\{v,z_v^1,\dots,z_v^{g(v)}\}$:
\begin{equation}
 \label{Rv}
 R_v:=\big[v,z_v^1,\dots,z_v^{g(v)}\big]\subset \hhG ;
\end{equation}
note that $R_v$ has genus $g(v)$.

There is a natural injective homomorphism
\begin{equation}
\label{Divmap}
\Div(G)\ha \Div(\hhG); \quad \quad \md\mapsto \hhd
\end{equation}
such that $\hhd$ is defined to be zero  on  each new vertex of $\hhG$, and equal to $\md$ on the vertices of $G$.
The above  homomorphism maps $\Prin(G)$ to $\Prin(\hhG)$,
hence it  descends to an injective homomorphism
$$ \Pic(G)\ha \Pic(\hhG); \quad \quad \delta\mapsto \widehat{\delta}$$
such that if $\delta=[\md]$ then $\widehat{\delta}=[\hhd]$.
The map \eqref{Divmap}  is used to define the combinatorial rank of a divisor on a general graph  $G$ via the combinatorial rank of a divisor on the weightless, loopless graph $\hhG$; indeed, as in   \cite{AC}, the combinatorial rank of $\md$ is
$$r_G(\md):=r_{\hhG}(\hhd),$$
where $r_{\hhG}(\hhd)$ is defined,  as  in \cite{BNRR}  (for weightless and loopless graphs), as follows.
If $\hhd$ is not equivalent to any effective divisor, we set $r_{\hhG}(\hhd)= -1$;
otherwise
$r_{\hhG}(\hhd)$ is the maximum   integer $k\geq 0$ such that for every effective
$\me\in \Div(\hhG)$ of degree $k$ the divisor $\hhd-\me$ is equivalent to an effective divisor.
In particular, if  $|\hhd|<0$
then $r_{\hhG}(\hhd)= -1$.

\

Now, let $g$ and $e$ be two non-negative integers; we define
 $$
e^g=e+\min\{ e,g\}.
 $$
\begin{remark}
\label{inflated}
For every irreducible curve $C$   of genus $g$,
by the theorems of Riemann-Roch and Clifford, $e^g$ is the minimum degree of a line bundle of rank $e$; more precisely
 for  a line bundle  $L$ on $C$ we have: $$
 r(C,L)\geq e \  \Longrightarrow \  \deg L\geq e^g.
 $$
 \end{remark}
The previous remark  together with the subsequent Lemma~\ref{lemmaew} serve  as motivation for the next definition.

\begin{defi}
\label{defmeg} Let $G$ be a   graph and $\me$  an effective divisor  of $G$.
We  define the (effective) divisor $\meg$ on $G$ so that for every $v\in V$
\[
\meg(v)= \me(v)^{g(v)}   =\me(v)+\min\{ \me(v),g(v)\}.
\]
\end{defi}
The superscript ``$\deg$"   indicates that the entries of $\meg$ are ``minimum degrees" associated to the entries of $\me$, as explained in Remark~\ref{inflated}.

Notice that if $G$ is weightless and loopless then $\meg= \underline{e}$.

\begin{lemma}
\label{lemmaew}
Let $G$ be a  graph and let $\md\in \Div(G)$.
If for every effective divisor $\me$ of degree $r$ the divisor $\md - \meg$
is equivalent to an effective divisor, then $r_G(\md)\geq r$.
\end{lemma}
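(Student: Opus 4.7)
The plan is to unfold the definition $r_G(\md)=r_{\hhG}(\hhd)$ and show that for every effective $\me'\in\Div(\hhG)$ with $|\me'|=r$ the divisor $\hhd-\me'$ is equivalent on $\hhG$ to an effective divisor. Given such an $\me'$, I would form its ``contraction'' $\me\in\Div(G)$ by
\[
\me(v):=\me'(v)+\sum_{i=1}^{g(v)}\me'(z_v^i),
\]
so that $\me(v)$ records the total mass of $\me'$ on the subgraph $R_v$; this $\me$ is effective of degree $r$ on $G$.

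The next step is to use the decomposition $\hhd-\me'=(\hhd-\widehat{\meg})+(\widehat{\meg}-\me')$ and handle the two summands separately. The first summand is equivalent to an effective divisor on $\hhG$: by hypothesis $\md-\meg$ is equivalent in $\Div(G)$ to an effective divisor, and the injective map \eqref{Divmap} sends $\Prin(G)$ into $\Prin(\hhG)$ and effective divisors on $G$ to effective divisors on $\hhG$. The remaining, nontrivial step is to show that $\widehat{\meg}-\me'$ is also equivalent to an effective divisor on $\hhG$; once both pieces are established, their sum will also be equivalent to an effective divisor, completing the argument.

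For this remaining step I would work locally within each $R_v$, exploiting the fact that in $\hhG$ each $z_v^i$ has valence $2$, connected only to $v$ by two parallel edges, so that $\mt_{\{z_v^i\}}=2v-2z_v^i\in\Prin(\hhG)$. For each pair $(v,i)$ I would subtract $\lceil\me'(z_v^i)/2\rceil$ copies of $\mt_{\{z_v^i\}}$ from $\widehat{\meg}-\me'$, which pushes chips from $v$ to $z_v^i$ in pairs. A direct bookkeeping shows that the new count at $z_v^i$ becomes $2\lceil\me'(z_v^i)/2\rceil-\me'(z_v^i)\in\{0,1\}$, while the new count at $v$ becomes $\min\{\me(v),g(v)\}-t_v$, where $t_v$ is the number of indices $i$ with $\me'(z_v^i)$ odd. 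The main obstacle is verifying the inequality $t_v\leq\min\{\me(v),g(v)\}$, but this holds for two independent reasons: there are only $g(v)$ vertices $z_v^i$ so $t_v\leq g(v)$, and each odd value contributes at least $1$ so $t_v\leq\sum_i\me'(z_v^i)\leq\me(v)$. This verifies that the precise ``$\min\{e,g\}$'' inflation in Definition~\ref{defmeg} is exactly what is needed to absorb the odd remainders produced in the local firing moves on each $R_v$, echoing Remark~\ref{inflated} on the Clifford bounds for irreducible curves.
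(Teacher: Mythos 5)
Your argument is correct and is essentially the paper's own proof: the same contraction $\me(v)=\me'(v)+\sum_i\me'(z_v^i)$, the same reduction to showing $\widehat{\meg}-\me'$ is equivalent to an effective divisor, and the same firing of $\lceil\me'(z_v^i)/2\rceil$ copies of the principal divisors supported on the $z_v^i$, with the identical bookkeeping via the count of odd values $\me'(z_v^i)$. The only difference is cosmetic (you phrase it as a two-term decomposition and spell out why $t_v\leq\min\{\me(v),g(v)\}$, which the paper asserts without comment).
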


\begin{proof}
Let $\hhG$ be the weightless loopless graph  defined at the beginning of the subsection; we need to show that $r_{\hhG}(\hhd)\geq r$, where $\hhd$  is the  divisor induced by $\md$ on $ \hhG$; see     \eqref{Divmap}.

So, let $\epsilon\in\Div(\hhG)$ be any effective divisor of degree $r$, and let us prove that
$\hhd-\epsilon$ is  equivalent to an effective divisor.
 We define an effective divisor $\me\in\text{Div}(G)$ by setting, for every vertex $v$ of $G$,
\begin{equation}
 \label{evdef}
\me(v):= \epsilon(v) + \epsilon(z_v^1) + \ldots + \epsilon(z_v^{g(v)})
\end{equation}
where   $z_v^1,\ldots, z_v^{g(v)}\in V(\hhG)$ as  in \eqref{Rv}.
Note that $|\me|=|\epsilon|=r$  hence, by hypothesis, $\md-\meg$ is equivalent to an effective divisor. Therefore, it suffices to show that $\widehat{\meg}$ is equivalent to a divisor containing $\epsilon$. For every  vertex $v\in\text{Div}(G)$   consider the principal divisor $\mt^v\in \Prin(\hhG)$
$$\mt^v := -\sum_{k=1}^{g(v)}a_v^k \mt_{ z_v^k},$$  where $\mt_{ z_v^k}\in \Prin(G)$ are defined in \eqref{tZdef} and,  for every $1\leq k\leq g(v)$,
$$
a_v^k := \Big\lceil\frac{\epsilon(z_v^k)}{2}\Big\rceil.
$$
To prove the lemma   we will show that     $\widehat{\meg}+ \sum_{v\in V(G)}\mt^v$  is effective and contains $\epsilon$.
We have, for any $u\in V(G)$,
\begin{eqnarray*}
\Bigr(\widehat{\meg}+ \sum_{v\in V(G)}\mt^v\Bigr)(u)&=& \me(u)+\min\{\me(u), g(u)\}- 2 \sum_{k=1}^{g(u)}a_u^k \\
&=& \me(u)+\min\{\me(u), g(u)\}- \sum_{k=1}^{g(u)}\epsilon(z_u^k)   - o(u)\\
 \end{eqnarray*}
where
$ o (u)$ denotes the number of indices $k$ with $1\leq k\leq g(u)$ such that
$\epsilon (z_u^k)$ is odd.
We have
$
  o (u)\leq \min\{\me(u), g(u)\},
$
hence, using \eqref{evdef},
$$
\Bigr(\widehat{\meg}+ \sum_{v\in V(G)}\mt^v\Bigr)(u)=  \epsilon(u) +\min\{\me(u), g(u)\}- o(u)
 \geq  \epsilon(u),
$$
as required. Next,
$$
\Bigr(\widehat{\meg}+ \sum_{v\in V(G)}\mt^v\Bigr)(z_u^k)=2 \Big\lceil\frac{\epsilon(z_u^k)}{2}\Big\rceil\geq \epsilon(z_u^k).
$$
The lemma is proved. \end{proof}

\begin{remark} Our proof of the previous Lemma has the advantage of being   elementary  and self-contained.
But  an alternative proof   could be obtained using metric graphs, as follows.
By \cite[Lemma 1.5]{bakersp}, when computing the rank of a divisor, we may regard $G$ as a metric graph. By \cite[Proposition 2.5]{Yoav}, the vertices of $G$ are a weighted rank determining set, which, by definition, means that a divisor $\md$ has rank at least $r$ exactly when $\md - \meg$ is equivalent to an effective divisor for every effective divisor $\me$ of degree $r$ supported on the vertices. But this is true by assumption.
\end{remark}

\subsection{Weightless and loopless graphs} An important tool for computing combinatorial ranks is the notion of reduced divisors with respect to a vertex as introduced by Baker and Norine in \cite{BNRR} (see also \cite{Luo}).
In this subsection we consider weightless graphs with no loops, as in loc. cit.;
let us recall the definition and a few basic facts.

\begin{defi}\label{reduced}
Let $\md$ be a divisor on a weightless and loopless graph $G$ and fix a vertex  $u\in V$.
A divisor $\md$ is said to be {\it reduced} with respect to $u$, or $u$-{\it reduced},  if
\begin{enumerate}[(1)]
\item $\md(v)\geq 0$ for all $v\in V\smallsetminus \{u\}$;
\item  for every non-empty set $A \subset V\smallsetminus\{u\}$, there exists a vertex
$v \in A$ such that $ \md(v) < v\cdot A^c$.
\end{enumerate}
Conditions (1) and (2)  are equivalent to the following:
\begin{enumerate}[$(1')$]
\item $\md_{|V\smallsetminus \{u\}} \geq 0$;
\item  for every non-empty  $A \subset V \smallsetminus\{u\}$
we have
 $(\md +\mt_A)_{|V\smallsetminus \{u\}}\not\geq 0$.
 \end{enumerate}
 \end{defi}
\begin{remark} Let $\md$ be a divisor on $G$ and   $u$ be a vertex of $G$.
\label{basicred}
\begin{enumerate}[(a)]
 \item
 If $\md$ is $u$-reduced, then for any integer $n$ the divisor $\md-nu$ is also $u$-reduced.
 \item
 If $\md$ is $u$-reduced on the graph $G$, and   $G$ is a spanning subgraph of $G'$
(i.e. $V(G)=V(G')$), then $\md$ is $u$-reduced on $G'$.
\end{enumerate}
\end{remark}
\begin{fact} \cite[Proposition 3.1]{BNRR}\label{redfct} Let $G$ be a weightless and   loopless graph. Fix a vertex $u \in V (G)$.
Then for every $\md \in\Div(G)$ there exists a unique $u$-reduced divisor $\md' \in \Div(G)$ such that $\md' \sim \md$.
\end{fact}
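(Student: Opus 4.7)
The plan is to prove existence and uniqueness separately: existence via Dhar's burning algorithm (an explicit reduction procedure), and uniqueness via the canonical partition decomposition of principal divisors given in Remark~\ref{tZrk}.

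For existence, I would first replace $\md$ by an equivalent divisor non-negative on $V \smallsetminus \{u\}$, which can be done by iteratively firing any vertex $v \neq u$ with $\md(v) < 0$ (adding $\mt_{\{v\}}$) and showing the process terminates by a bounded-energy argument using connectedness of $G$. Then I would apply Dhar's burning algorithm from $u$: declare $u$ burning, and iteratively burn each vertex $v \neq u$ that has strictly more than $\md(v)$ burning neighbors. If the whole graph burns, the divisor is $u$-reduced by construction. Otherwise the unburnt set $A \subset V \smallsetminus \{u\}$ satisfies $v \cdot A^c \leq \md(v)$ for every $v \in A$, and firing $A$ (replacing $\md$ by $\md + \mt_A$) preserves non-negativity off $u$ while changing $\md(u)$ by $u \cdot A \geq 0$. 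Iterating this yields the desired $u$-reduced representative in the limit.

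For uniqueness, suppose $\md$ and $\md'$ are both $u$-reduced and equivalent, and set $\mt := \md - \md' \in \Prin(G)$. If $\mt \neq 0$, Remark~\ref{tZrk} yields a partition $V = Z_0 \sqcup \ldots \sqcup Z_m$ with $Z_0, Z_m$ non-empty and $\mt = \sum_{i=0}^m i \mt_{Z_i}$, satisfying in addition $\mt_{|Z_m} \leq (\mt_{Z_m})_{|Z_m}$. If $u \notin Z_m$, I apply condition~(2) of Definition~\ref{reduced} to the $u$-reduced $\md'$ with $A := Z_m$: there exists $v \in Z_m$ with $\md'(v) < v \cdot Z_m^c$, whence $\md(v) = \md'(v) + \mt(v) \leq \md'(v) - v \cdot Z_m^c < 0$, contradicting $\md(v) \geq 0$ for $v \neq u$. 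Thus $u \in Z_m$. Using $\mt_V = 0$ I can rewrite $-\mt = \sum_{j=0}^m j\, \mt_{Z_{m-j}}$, which is again a decomposition of the form in Remark~\ref{tZrk} but with top set $Z_0$; the same argument applied to $\md$ (in place of $\md'$) then forces $u \in Z_0$, contradicting $Z_0 \cap Z_m = \emptyset$. Hence $\mt = 0$.

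The main obstacle I expect is the termination of the existence procedure. Uniqueness is a clean consequence of Remark~\ref{tZrk}, but the Dhar iteration is subtle because under firing of the unburnt set $A$ the obvious invariant $\md(u)$ may stay constant (precisely when $A$ is not adjacent to $u$), so a naive monovariant does not suffice. A rigorous termination argument requires a finer well-founded quantity, for instance a lexicographic refinement weighted by BFS distance from $u$; alternatively one can bypass the issue by invoking the theory of superstable configurations and finiteness of the sandpile group $\Pic^0(G)$, as carried out in the original Baker--Norine treatment.
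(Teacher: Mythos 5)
This statement is quoted in the paper as a Fact with a citation to \cite[Proposition 3.1]{BNRR}; the paper gives no proof of its own, so there is nothing internal to compare you against --- your proposal has to stand on its own. Your uniqueness argument does: it is complete and correct, and it is a nice use of Remark~\ref{tZrk} (the decomposition $\mt=\sum i\,\mt_{Z_i}$ together with $\mt_{|Z_m}\leq(\mt_{Z_m})_{|Z_m}$, and its mirror image for $-\mt$ with top set $Z_0$, forces $u$ into both $Z_m$ and $Z_0$). This is essentially the standard Baker--Norine uniqueness proof, recast in the paper's notation.

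The existence half has two genuine problems. First, a sign error: with the paper's convention \eqref{tZdef}, $\mt_{\{v\}}(v)=-v\cdot\{v\}^c=-\operatorname{val}(v)$, so \emph{adding} $\mt_{\{v\}}$ to a divisor with $\md(v)<0$ makes $\md(v)$ strictly more negative; you need to add $\mt_{\{v\}^c}$ (i.e.\ let $v$ borrow), and even then the termination of this preliminary stage needs an argument (it does terminate, e.g.\ by bounding the number of borrowings of each vertex in terms of those of its neighbours and using $\beta(u)=0$ plus connectedness, but ``bounded energy'' is not a proof). Second, and more seriously, the termination of the main Dhar iteration is left open, and neither of your proposed fixes works as stated: a BFS-distance--weighted sum is not monotone under firing the unburnt set $A$ (chips cross $\partial A$ into burnt vertices that may be \emph{farther} from $u$), and finiteness of $\Pic^0(G)$ combined with your uniqueness only gives injectivity of the map from $u$-reduced divisors to classes, not surjectivity, unless you separately count the reduced divisors. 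The clean repair is a pigeonhole argument: every iterate $\md_k$ is non-negative off $u$, has fixed degree, and has $\md_k(u)$ non-decreasing and bounded above by $|\md|$, so the iterates lie in a finite set; a repetition $\md_k=\md_{k+p}$ would make the cumulative firing vector over the cycle a non-zero, non-negative element of the kernel of the Laplacian vanishing at $u$, which is impossible on a connected graph since that kernel is spanned by the all-ones vector. With the sign corrected and this termination argument inserted, your proof is sound.
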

The next result is probably well known to the experts, but we could not find a proof in the literature.
\begin{lemma}\label{E:reduced} Let $\delta\in \Pic(G)$.
\begin{enumerate}[(a)]
 \item
 \label{reda}
$r_G(\delta)=-1$    if and only if there exists a vertex $u$  whose $u$-reduced representative $\md$ in $\delta$ has $\md(u)<0$.
\item
\label{redb}  $r_G(\delta)=0$    if and only if there exists a vertex $u$  whose $u$-reduced representative $\md$ in $\delta$ has $\md(u)=0$.
\end{enumerate}
\end{lemma}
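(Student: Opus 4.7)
The plan is to reduce both parts to the following key claim, which is really the substantive content of the lemma: \emph{if $\md$ is a $u$-reduced divisor and $\md\sim\me$ for some effective $\me$, then $\md(u)\geq 0$} (equivalently, since $u$-reducedness forces $\md(v)\geq 0$ for $v\neq u$, the divisor $\md$ itself is effective). Granting this claim, both (a) and (b) follow by short formal arguments using only Fact~\ref{redfct} and Remark~\ref{basicred}.

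For (a), the implication $(\Rightarrow)$ is immediate: if $r_G(\delta)=-1$, then for any $u$ the $u$-reduced representative $\md$ of $\delta$ is itself not effective, and since $\md(v)\geq 0$ for all $v\neq u$, we must have $\md(u)<0$. The converse $(\Leftarrow)$ is exactly the contrapositive of the key claim applied to $\md$.

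For (b), assume the key claim and part (a). If the $u$-reduced representative $\md$ satisfies $\md(u)=0$, then $\md\geq 0$ so $r_G(\delta)\geq 0$; moreover $\md-u$ is again $u$-reduced by Remark~\ref{basicred}(a), with $(\md-u)(u)=-1<0$, so by (a) the class $\delta-[u]$ has no effective representative, forcing $r_G(\delta)\leq 0$. Conversely, if $r_G(\delta)=0$, pick a vertex $v$ such that $\delta-[v]$ has no effective representative and let $\md$ be the $v$-reduced representative of $\delta$. Then $\md-v$ is $v$-reduced and represents $\delta-[v]$, so (a) applied to $\delta-[v]$ yields $\md(v)-1<0$; on the other hand $r_G(\delta)\geq 0$ together with (a) applied to $\delta$ gives $\md(v)\geq 0$. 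Hence $\md(v)=0$ and $u:=v$ works.

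The main obstacle is thus the key claim. I would prove it as follows. Set $\mt:=\me-\md\in\Prin(G)$; the case $\mt=0$ is trivial, so assume $\mt\neq 0$ and apply Remark~\ref{tZrk} to write $\mt=\sum_{i=0}^{m}i\,\mt_{Z_i}$ for a partition $V=Z_0\sqcup\cdots\sqcup Z_m$ with $Z_0,Z_m$ non-empty. The first step is to show that $u\in Z_m$. Indeed, if $u\notin Z_m$, then $A:=Z_m\subset V\smallsetminus\{u\}$ is non-empty, and for every $v\in Z_m$ the inequality $\mt_{|Z_m}\leq (\mt_{Z_m})_{|Z_m}$ from Remark~\ref{tZrk} combined with $\me_{|Z_m}\geq 0$ gives $(\md+\mt_{Z_m})(v)\geq \me(v)\geq 0$, while for $v\in V\smallsetminus(Z_m\cup\{u\})$ we have $\md(v)\geq 0$ by $u$-reducedness and $\mt_{Z_m}(v)=v\cdot Z_m\geq 0$ by \eqref{tZdef}; together these show $(\md+\mt_{Z_m})_{|V\smallsetminus\{u\}}\geq 0$, contradicting condition $(2')$ of Definition~\ref{reduced} applied to $A=Z_m$. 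Once $u\in Z_m$, a direct computation using \eqref{tZdef} gives $\mt_{Z_i}(u)=u\cdot Z_i$ for $i<m$ and $\mt_{Z_m}(u)=-\sum_{i<m}u\cdot Z_i$, whence
\[
\mt(u)=\sum_{i=0}^{m}i\,\mt_{Z_i}(u)=-\sum_{i<m}(m-i)(u\cdot Z_i)\leq 0,
\]
so $\md(u)=\me(u)-\mt(u)\geq \me(u)\geq 0$, completing the proof.
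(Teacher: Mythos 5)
Your proposal is correct and follows essentially the same route as the paper: the ``key claim'' is just the contrapositive of the paper's argument for part (a), using the same decomposition $\mt=\sum i\,\mt_{Z_i}$ from Remark~\ref{tZrk} and the same dichotomy on whether $u$ lies in $Z_m$ (with your explicit computation of $\mt(u)\leq 0$ replacing the paper's appeal to $\mt_{|Z_m}\leq(\mt_{Z_m})_{|Z_m}$), and the deduction of part (b) via $\md-u$ being $u$-reduced is identical. No gaps.
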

\begin{proof}
 One implication of part \eqref{reda}  is clear. To prove the other,  let $\md$ be $u$-reduced and suppose $\md(u)<0$.
By contradiction, suppose  $r_G(\md)\geq 0$; let $\mt \in \Prin(G)$ be such that $\md+\mt \geq 0$, notice that $\mt$ is not trivial.
By Remark~\ref{tZrk}, there exists a non-empty $Z\subsetneq V(G)$ such that $\mt_{|Z}\leq  (\mt_Z)_{|Z}$, hence
$$
0\leq (\md+\mt)_{|Z} \leq (\md + \mt_Z)_{|Z}.
$$
In particular, $u\not\in Z$ (if $u\in Z$ then $\mt _Z(u)\leq 0$,  hence the above inequality yields $0\leq \md(u)$,
contradicting the initial assumption).
For any $v\neq u$ with $v\not\in Z$ we have $\mt_Z(v)\geq 0$, hence
$$
(\md + \mt_Z)(v)\geq \md(v)\geq 0,
$$
because $\md$ is $u$-reduced. We have thus proved that $(\md + \mt_Z)_{|V\smallsetminus \{u\}}\geq 0$,
which is impossible as   $\md$  is $u$-reduced.

For \eqref{redb}, suppose $\md$ is $u$-reduced with $\md(u)=0$; then $r_G(\md)\geq 0$. As the divisor $u\in \Div (G)$ has degree 1,  it suffices to prove that $r_G(\md -u)=-1$. Remark~\ref{basicred} yields that $\md-u$ is $u$-reduced, moreover  $(\md-u)(u)=-1$.
By the previous part we get   $r_G(\md-u)=-1$.

Conversely, assume $r_G(\delta)=0$.
By the previous part,  if $\md\in \delta$ is $u$-reduced, then $\md(u)\geq 0$.
It suffices to show that if,
for all $u\in V$, the $u$-reduced representative, $\md$,  satisfies $\md(u)>0$, then $r_G(\delta)\geq 1$.
 Indeed, let $\me$ be an effective divisor of degree $1$; hence $\me=v$ for some $v\in V$.
 Let   $\md'\in\delta$ be $v$-reduced, then $\md'-v$ is effective, and we are done.
\end{proof}

  Let $G$ be a loopless, weightless graph and pick $\md\in \Div G$.
Set
\begin{equation}
\label{ad}
\agd=\left\{ \begin{array}{ll}
\min\{ \md(v),\  \  \forall v\in V(G)\}  \  &\text{ if }   \md \geq 0\\
\  -1 \  &\text{ otherwise.}\\
\end{array}\right.
\end{equation}
\begin{remark}
\label{adrk}
It is clear that $r_G(\md)\geq \agd$ (in fact,   ``$\ell$" stands for ``lower  bound"). We now look for conditions under which equality occurs.
\end{remark}

The following result generalizes to arbitrary combinatorial rank the implications ``if" of Lemma~\ref{E:reduced}
(the implication ``only if" does not generalize; see Example~\ref{notrex}).
\begin{prop}
\label{rank}  Let $G$ be a weightless, loopless graph.
Let $\md \in \Div (G)$ be such that for some $u\in V(G)$ with $\md(u)= \agd$ we have that $\md$ is reduced with respect to $u$.
Then $r_G(\md)=\agd$.
\end{prop}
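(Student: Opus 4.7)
We already have the lower bound $r_G(\md) \geq \agd$ from Remark~\ref{adrk}, so the task is to establish $r_G(\md) \leq \agd$. The plan is to exhibit a specific effective divisor $\me$ of degree $\agd + 1$ for which $\md - \me$ is not equivalent to any effective divisor; then by the definition of combinatorial rank we get $r_G(\md) \leq \agd$, and combining with Remark~\ref{adrk} gives the desired equality.

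First I would dispatch the trivial case $\md \not\geq 0$. Here $\agd = -1$ by definition, and by hypothesis $\md(u) = -1$. Since $\md$ is $u$-reduced, Lemma~\ref{E:reduced}\eqref{reda} immediately gives $r_G(\md) = -1 = \agd$, and we are done.

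For the main case $\md \geq 0$ (so $\agd = \md(u) \geq 0$), the natural choice is $\me := (\agd + 1)\, u$, an effective divisor of degree $\agd + 1$. I would then argue that $\md - \me = \md - (\agd+1)u$ is not equivalent to any effective divisor by applying Lemma~\ref{E:reduced}\eqref{reda} to it. To do so I need two ingredients: first, that $\md - (\agd+1)u$ is still $u$-reduced, which follows immediately from Remark~\ref{basicred}(a) since subtracting a multiple of $u$ preserves $u$-reducedness; and second, that its value at $u$ is negative, which is clear because
\[
(\md - (\agd+1)u)(u) = \md(u) - (\agd + 1) = \agd - (\agd + 1) = -1 < 0.
\]
Lemma~\ref{E:reduced}\eqref{reda} then yields $r_G(\md - (\agd+1)u) = -1$, i.e.\ this divisor is not equivalent to any effective divisor, which is exactly what we wanted.

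There is really no obstacle here: the whole argument rests on having Lemma~\ref{E:reduced}\eqref{reda} as the characterization of rank $-1$ via $u$-reducedness, together with the trivial persistence of $u$-reducedness under subtraction of multiples of $u$ (Remark~\ref{basicred}(a)). The statement should be viewed as the natural extension of Lemma~\ref{E:reduced}\eqref{redb} from rank $0$ to arbitrary rank, and the proof just iterates the same idea by subtracting $\agd + 1$ copies of $u$ at once.
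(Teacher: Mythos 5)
Your proposal is correct and follows essentially the same route as the paper: the same choice $\me=(\agd+1)u$, the same use of Remark~\ref{basicred}(a) to preserve $u$-reducedness, and the same appeal to Lemma~\ref{E:reduced}\eqref{reda} to conclude that $\md-\me$ has rank $-1$. No gaps.
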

\begin{proof} For simplicty, we write  $\ell=\agd$.
Let us first suppose $\md\not\geq 0$, that is $\ell=-1$. Then the statement is a consequence of Lemma~\ref{E:reduced}.

Assume now $\ell\geq 0$.
Since $r_G(\md)\geq \ell$  it suffices to prove
\begin{equation}
\label{thr}
r_G(\md)< \ell+1.
\end{equation}
 Let $\me:=(\ell+1)u\in \Div (G)$, with $u$ as in the statement; so $\me$ is an effective
divisor of degree $\ell+1$.  Set $\mc=\md -\me$;
to prove \eqref{thr} it is enough to show that
$r_G(\mc)=-1$.
Now,   $\mc$ is reduced with respect to $u$ (see Remark~\ref{basicred}), hence   the previous case yields $r_G(\mc)=-1$.
\end{proof}

\subsection{Computing the rank for general  graphs}
We now generalize the previous set-up   to weighted   graphs.
Let $g$ and $d$ be two non-negative integers; set
 $$
d_g:=\max\left \{d-g,\left\lfloor{\frac{d}{2}}\right\rfloor\right \}.
 $$
\begin{remark}
\label{rdg}
For every irreducible curve $C$   of genus $g$,
by the theorems of Riemann-Roch and Clifford, $d_g$ is the maximum rank of a line bundle of degree $d$; more precisely
 for  a line bundle  $L$ on $C$ we have: $$
\deg L \leq d \  \Longrightarrow \   r(C,L)\leq d_g.
 $$
 Similarly,
  $d_g$ is the maximum combinatorial rank of a divisor of degree $d$ on a  graph $G$   of genus $g$; that is,  for   every $\md \in \Div(G)$ we have
 $$
|\md| \leq d \  \Longrightarrow \   r_G(\md )\leq d_g.
$$
This follows from the Riemann-Roch formula, \cite[Thm 3.8]{AC}, and the Clifford inequality,
due to Baker Norine \cite[Corollary 3.5]{BNRR} (the extension  to weighted graphs is trivial, see \cite[Proposition 1.7(4)]{Cjoe}).
 \end{remark}
 The notation $d_g$ introduced here is related  to the notation $e^g$ introduced before Remark~\ref{inflated}
 by the following trivial Lemma, of which we omit the proof.
\begin{lemma}
 \label{triv}
 Let $g,e,d$ be non-negative integers.
 Then $(e^g)_g=e$ and
\begin{displaymath}
(d_g)^g=\left\{ \begin{array}{ll}
d-1  \  &\text{ if }   d\leq 2g-1 \text{ and $d$ is odd}  \\
d \  &\text{ otherwise.}\\
\end{array}\right.
\end{displaymath}
\end{lemma}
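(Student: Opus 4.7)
The plan is to verify both identities by direct case analysis, splitting on the comparison of the input integer with $g$ (or $2g$) and, for the second identity, on the parity of $d$. Since the operations $e\mapsto e^g$ and $d\mapsto d_g$ are piecewise-linear with breakpoints dictated by these comparisons, unwinding the definitions in each subcase should give the stated equalities on the nose. I do not anticipate any real obstacle; the lemma is an arithmetic tautology, and the only thing to watch is the floor function in $\lfloor d/2\rfloor$ when $d$ is odd.

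For the first identity $(e^g)_g=e$, I would separate the cases $e\leq g$ and $e>g$. In the first case $e^g=2e$ and $\lfloor 2e/2\rfloor=e\geq 2e-g$, so $(2e)_g=e$. In the second case $e^g=e+g$, and since $e>g$ we have $\lfloor(e+g)/2\rfloor<e$, while $(e+g)-g=e$, whence $(e+g)_g=e$. Both branches collapse to $e$, giving the first claim.

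For the second identity $(d_g)^g$, I would separate according to whether $d\geq 2g$ or $d<2g$, with a further parity split in the latter range. If $d\geq 2g$ then $d-g\geq g\geq\lfloor d/2\rfloor$, so $d_g=d-g$; since $d-g\geq g$, we get $(d-g)^g=(d-g)+g=d$, landing in the ``otherwise'' branch. If $d<2g$ and $d$ is even, then $d_g=d/2<g$, so $(d/2)^g=d/2+d/2=d$, again ``otherwise''. If $d<2g$ and $d$ is odd, then $\lfloor d/2\rfloor=(d-1)/2\geq d-g$, so $d_g=(d-1)/2<g$, and $((d-1)/2)^g=(d-1)/2+(d-1)/2=d-1$, matching the exceptional branch. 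The endpoint $d=2g-1$ (odd) falls into this last case with $d_g=g-1$ and $(g-1)^g=2g-2=d-1$, consistently. Assembling these cases yields the piecewise formula in the statement.
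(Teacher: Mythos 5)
Your case analysis is correct and complete; the paper itself explicitly omits the proof of this lemma, and a direct verification of this kind is exactly what is intended. One small slip: in the case $d\geq 2g$ you justify $d_g=d-g$ via the chain $d-g\geq g\geq\lfloor d/2\rfloor$, but the second inequality is false in general (e.g.\ $d=10$, $g=2$ gives $\lfloor d/2\rfloor=5>2=g$); the inequality you actually need, $d-g\geq\lfloor d/2\rfloor$, follows directly from $d\geq 2g$ since then $d-g\geq d/2$, so the conclusion of that step is unaffected.
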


Now we define:
\begin{defi}
\label{defmdr}Let $G$ be a   graph  and let $\md\in \Div G$ be an effective  divisor.
We  define the (effective) divisor $\mdr$ such that for every $v\in V$
$$
\mdr(v)= \md(v)_{g(v)}=\max\left \{ \md(v)-g(v),\left\lfloor{\frac{\md(v)}{2}}\right\rfloor\right \}.
$$
\end{defi}

\begin{remark} The subscript ``$\operatorname{rk}$" indicates that the entries
of $\mdr$ are thought of as ``maximum ranks", in the spirit of Remark~\ref{rdg}.
\end{remark}
\begin{example}
 \label{svrk}  Suppose $G$ is a graph with no edges, made of a single vertex $v$ having weight $g(v)$. Then for any
   $\md \in \Div(G)$ we have $r_G(\md) =\mdr(v)$.
More exactly, we have $\hhG = R_v$ (where $R_v$
is the  graph   introduced in \eqref{Rv})
and
$$
r_G(\md) =r_{R_v}(\hhd) = \md(v)_{g(v)}
$$
see    \cite[Lemma 3.7 and Thm 3.8]{AC}.
\end{example}
For any divisor $\md$ on $G$ we define
\begin{equation}
\label{adw}
\agd=\left\{ \begin{array}{ll}
\min\{\mdr(v),\  \  \forall v\in V(G)\}  \  &\text{ if }   \md \geq 0\\
\  -1 \  &\text{ otherwise.}\\
\end{array}\right.
\end{equation}

This definition of $\agd$ coincides with the one in \eqref{ad} if  $G$ has no loops and no weights.

\begin{lemma}
\label{adwprop}
Let $G$ be any graph and let $\md\in \Div G$ be a divisor. Then $r_G(\md)\geq \agd$.
\end{lemma}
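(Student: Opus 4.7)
The plan is to reduce the statement to Lemma~\ref{lemmaew} by a direct pointwise comparison. If $\md$ is not effective, then $\agd=-1$ by definition, and the inequality is trivial since $r_G(\md)\geq -1$ always. So from now on assume $\md\geq 0$ and set $r:=\agd=\min\{\mdr(v):v\in V\}\geq 0$.

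By Lemma~\ref{lemmaew}, to conclude $r_G(\md)\geq r$ it is enough to show that for every effective divisor $\me\in\Div(G)$ with $|\me|=r$, the divisor $\md-\meg$ is equivalent to an effective divisor. In fact I will prove the stronger statement that $\md-\meg\geq 0$ pointwise, so no equivalence manipulation is needed.

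Fix a vertex $v\in V$. Since $\me$ is effective, $\me(v)\leq |\me|=r\leq \mdr(v)$, and recall that
\[
\mdr(v)=\max\left\{\md(v)-g(v),\ \left\lfloor\tfrac{\md(v)}{2}\right\rfloor\right\}.
\]
Hence at least one of the two cases below holds. If $\me(v)\leq \md(v)-g(v)$, then using $\min\{\me(v),g(v)\}\leq g(v)$ we get
\[
\meg(v)=\me(v)+\min\{\me(v),g(v)\}\leq \me(v)+g(v)\leq \md(v).
\]
Otherwise $\me(v)\leq \lfloor \md(v)/2\rfloor$, so using $\min\{\me(v),g(v)\}\leq \me(v)$ we obtain
\[
\meg(v)\leq 2\me(v)\leq 2\left\lfloor\tfrac{\md(v)}{2}\right\rfloor\leq \md(v).
\]
In either case $\meg(v)\leq \md(v)$. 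As this holds for every $v\in V$, we have $\md-\meg\geq 0$, and Lemma~\ref{lemmaew} yields $r_G(\md)\geq r=\agd$.

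There is no real obstacle here: the definitions of $\meg$ and $\mdr$ are tailor-made so that the pointwise estimate $\mdr(v)\geq \me(v)\Rightarrow \meg(v)\leq \md(v)$ works via a two-case split mirroring the $\max$ in the definition of $(\cdot)_{g(v)}$, and Lemma~\ref{lemmaew} bridges this effective-domination statement to the rank inequality.
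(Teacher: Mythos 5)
Your proof is correct and follows essentially the same route as the paper: both reduce to Lemma~\ref{lemmaew} by showing the pointwise domination $\md-\meg\geq 0$ for every effective $\me$ of degree $\agd$. The only cosmetic difference is that the paper verifies this by factoring through the inequality $\md(v)\geq \mdr(v)^{g(v)}\geq \me(v)^{g(v)}$ (using monotonicity of $e\mapsto e^{g}$), whereas you unpack the same two-case computation on the $\max$ defining $\mdr(v)$ directly with $\me(v)$.
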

\begin{proof}
Set $\ell= \agd$; we can clearly assume $\ell\geq 0$, hence $\md \geq 0$.
Note that we have
$$\md(v)\geq \mdr (v)+\min\{\mdr(v),g(v)\}=  \mdr(v)^{g(v)}
$$
 for every $v\in V(G)$.
Now, let $\me$ be any effective divisor of degree $\ell$.
  In particular, $\me(v)\leq \ell $  for every $v\in V(G)$, so by the definition of $\ell$, we have
$$\mdr(v)\geq \ell\geq \me(v)$$ hence
$$
\mdr(v)^{g(v)}\geq \me(v)^{g(v)}.
$$
Combining the last inequality with the first we have
$$\md(v)\geq \mdr(v)^{g(v)}\geq \meg(v).$$
We conclude  that for every effective divisor $\me$ with $|\me|=\ell$ we have
$\md-\meg \geq 0$.
By Lemma~\ref{lemmaew} we are done.
\end{proof}

Let   $G$ be any graph.
As in \cite{AC}, we denote by $G_0$ the weightless graph  obtained by removing from $G$ every loop, and disregarding all weights.
We have   natural identifications $V(G)=V({G_0})$
and  $\Div(G)=\Div(G_0).$
Now,    this identification does not preserve the combinatorial rank, but we have
 \begin{equation}
\label{rG}
r_{G_0}(\md)\geq   r_{G}(\md)
\end{equation}
 for every $\md\in \Div(G)$
(see \cite[Remark. 3.3]{AC}).
Also, we have  $\Prin ({G_0})=\Prin (G)$
and hence an identification
$
\Pic (G)=\Pic(G_0).
$
Finally, the above facts are easily seen to   imply
\begin{equation}
\label{r-1}
r_{G_0}(\md)=-1\Leftrightarrow r_{G}(\md)=-1.
\end{equation}
 The next result generalizes Proposition \ref{rank}.

\begin{prop}
\label{wrank}  Let $G$ be any graph.
Let $\md \in \Div (G)$ be such that for some $u\in V(G)$ with $\mdr(u)= \agd$ we have that $\md$ is reduced with respect to $u$.
Then
$r_G(\md)=\agd$.
\end{prop}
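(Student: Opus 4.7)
The lower bound $r_G(\md)\ge\agd=:\ell$ is already supplied by Lemma~\ref{adwprop}, so the task reduces to establishing the reverse inequality $r_G(\md)\le\ell$. The case $\ell=-1$ (equivalently, $\md$ not effective) is immediate: $u$-reducedness of $\md$ forces $\md(u)<0$, and since loops contribute nothing to $v\cdot A^c$, $\md$ remains $u$-reduced on $G_0$; hence Lemma~\ref{E:reduced}(a) on $G_0$ together with \eqref{r-1} gives $r_G(\md)=-1$. From here on I assume $\ell\ge 0$, whence $\md\ge 0$.

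Since $r_G(\md)=r_{\hhG}(\hhd)$, the strategy is to exhibit an effective divisor $\epsilon$ of degree $\ell+1$ on $\hhG$ such that $\hhd-\epsilon$ is not linearly equivalent on $\hhG$ to any effective divisor. The natural candidate arises from the local situation at $u$: by Example~\ref{svrk}, the rank on $R_u$ of $\md(u)\cdot u$ equals $\md(u)_{g(u)}=\mdr(u)=\ell$, so there exists an effective divisor $\epsilon_0$ of degree $\ell+1$ supported on $V(R_u)$ such that $\md(u)\,u-\epsilon_0$ is not equivalent on $R_u$ to any effective divisor. By Fact~\ref{redfct} and Lemma~\ref{E:reduced}(a) applied on $R_u$, its unique $u$-reduced representative $\md^*$ on $R_u$ satisfies $\md^*(u)<0$.

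A key observation is that $\Prin(R_u)\subset\Prin(\hhG)$: indeed, using $\mt_{V(R_u)}^{R_u}=0$, the group $\Prin(R_u)$ is generated by $\mt_Z^{R_u}$ with $Z\subset V(R_u)\setminus\{u\}$, and since the vertices in such a $Z$ are adjacent in $\hhG$ only to $u$, one has $\mt_Z^{R_u}=\mt_Z^{\hhG}$. Defining $\mu\in\Div(\hhG)$ to coincide with $\md^*$ on $V(R_u)$, with $\md$ on $V(G)\setminus\{u\}$, and with $0$ on the remaining new vertices $z_v^k$ ($v\ne u$), this observation yields $\mu\sim_{\hhG}\hhd-\epsilon_0$, while $\mu(u)=\md^*(u)<0$. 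Hence it suffices to show that $\mu$ is $u$-reduced on $\hhG$, for then Lemma~\ref{E:reduced}(a) on $\hhG$ forces $r_{\hhG}(\hhd-\epsilon_0)=-1$ and the proof is complete.

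The main obstacle is the verification of $u$-reducedness of $\mu$ on $\hhG$, which I would dispatch by a case split on a nonempty $A\subset V(\hhG)\setminus\{u\}$. If $A\cap(V(R_u)\setminus\{u\})$ is nonempty, apply $u$-reducedness of $\md^*$ on $R_u$ to this subset: the $w=z_u^k$ it produces satisfies $\md^*(w)<2$, and since $w$ is only adjacent in $\hhG$ to $u\notin A$, the equality $w\cdot_{\hhG}A^c=2$ yields the required strict inequality. Otherwise $A$ avoids the new vertices of $R_u$, and one either selects a ``dangling'' $z_v^k\in A$ with $v\notin A$ (for which the condition holds trivially, as $\mu(z_v^k)=0<2$), or else the set $A_G:=A\cap V(G)$ is nonempty and one applies $u$-reducedness of $\md$ on $G$ to $A_G$, observing that $w\cdot_{\hhG}A^c$ exceeds $w\cdot_G A_G^c$ by twice the number of $z_w^k$ outside $A$, so the strict inequality transports from $G$ to $\hhG$.
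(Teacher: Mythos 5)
Your proof is correct and follows essentially the same route as the paper's: both isolate the subgraph $R_u$, use the single-vertex computation of Example~\ref{svrk} to produce an effective divisor of degree $\agd+1$ supported there, pass to its $u$-reduced form via the inclusion $\Prin(R_u)\hookrightarrow\Prin(\cdot)$, and conclude from $u$-reducedness of the resulting divisor on the ambient graph that its rank is $-1$. The only difference is that the paper carries this out on the intermediate graph $G^u$ (resolving only the weight and loops at $u$) rather than on all of $\hhG$, which spares it your extra sub-case for the dangling vertices $z_v^k$ with $v\neq u$.
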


\begin{proof}
Let us write $\ell=\agd$ and let $G_0$ be the weightless, loopless graph defined above. We first suppose   $\md\ngeq 0$, i.e., $\ell=-1$. Then the fact  that $r_G(\md)=-1$ follows immediately by combining   \eqref{r-1} with Lemma~\ref{E:reduced}.

Assume now that $\ell\geq 0$. By Lemma~\ref{adwprop}  it is enough to show that
$$r_G(\md)< \ell+1.$$

Denote by
 ${G^u}$
  the graph obtained from $G$ by adding $\w(u)$ loops based at $u$ and then by inserting a vertex
 in each loop based at  $u$. 
  Notice that, using the notation at the beginning of subsection~\ref{wsec},
we have a connected subgraph $R_u\subset G^u$
of genus $g(u)$.
Denote by $\md^u$   the divisor obtained by extending $\md$ to $G^u$ with degree $0$ on the new vertices of $R_u$.

As we saw in Example~\ref{svrk},  we have
\begin{equation}
\label{Ru}
\ell=\mdr(u)=r_{R_u}( \md^u_{|R_u}).
\end{equation}
It is clear that
 $r_G(\md)=r_{G^u}(\md^u)$
(as $\widehat{{G^u}}=\hhG$ and $\widehat{\md^u}=\hhd$). It suffices to show  $r_{G^u}(\md^u)<\ell+1$.

By \eqref{Ru}, there is on $R_u$  an effective divisor, $\un e_u$,    such that   $|\me_u|=\ell+1$ and $$r_{R_u}({\md^u}_{|R_u}-\un e_u)=-1.$$
Define $\me\in \Div(G^u)$  as the extension of $\me_u$ to $G^u$ with degree $0$ on all vertices outside $R_u$ and set $\mc:=\md^u-\me$.
 Since  $|\me|=\ell+1$, to conclude  it is enough to check that $r_{G^u}(\mc)=-1$.

Let $\mc'_u\in \Div(R_u)$ be linearly equivalent to  ${\md^u}_{|R_u}-\un e_u$   and reduced with respect to $u$;  since $r_{R_u}(\mc'_u)=-1$ we have  $ \mc_u'(u)<0$.
Let $\mc'$ be the extension of $\mc'_u$ to $\Div(G^u)$ such that $\mc'(v):=\mc(v) =\md(v) $ for all $v\in V(G^u)\smallsetminus V(R_u)$; then $\mc'$ is linearly equivalent to $\mc$ (since we have an inclusion $\Prin (R_u)\hookrightarrow \Prin(G^u)$),
hence it suffices to prove that
$$
r_{G^u}(\mc')=-1.
$$
Now, the restrictions of $\mc'$ to $R_u$ and to $G$ are both $u$-reduced
(the latter  because $\mc'$ coincides with $\md$ on $V(G)\smallsetminus \{u\}$, which is  $u$-reduced   by hypothesis).
Hence
$\mc'$ is $u$-reduced, and hence $r_{G^u}(\mc')=-1.$
\end{proof}

\subsection{Dhar decomposition}
 \label{S:Dhar}
Fix any graph $G$; we are going to define a decomposition, which we call the ``Dhar decomposition", that    will come very useful with inductive arguments, and which is  independent of the weights or of the loops of $G$.

Fix a vertex $u$ of $G$ and  let $\md$ be a divisor on $G$ whose restriction to $V\smallsetminus\{u\}$ is effective;  the   Dhar  decomposition associated to $\md$    with respect to    $u$ is a decomposition of $V$   that will be   denoted as follows:
\begin{equation}
 \label{dhareq}
V=Y_{0}\sqcup Y_1\ldots\sqcup Y_{l}\sqcup W.
\end{equation}
 For the reader familiar with Dhar Burning Algorithm, $Y_j$ will be the set of vertices burned at the $j$-th  day when starting a fire from $u$, and $W$ is the set which remains unburned at the end. Hence, we shall refer to
it as the Dhar decomposition of $V$ associated
to $u$.

Denote $Y_{0}=\{u\}$ and set $W_0= V\smallsetminus \{u\}$. If $\md+\un t_{W_0}$
is effective (which implies that $\md$ is not reduced at $u$), then set $W=W_0$, and the decomposition is just
 $V= Y_0\sqcup W.$
Otherwise, define $Y_{1}$ to be the set of vertices in $W_0$ where $\underline{d}+\underline{t}_{W_0}$
is negative.

Now, repeat the process: suppose that $Y_{0},Y_{1},\ldots,Y_{j-1}$
have been defined. Denote $W_{j-1}:=V\smallsetminus Y_{0}\sqcup\ldots\sqcup Y_{j-1}$, and consider $\md+\mt_{W_{j-1}}$.
If it is effective, then $W$ will be equal to  $W_{j-1}$,
and we are done. Otherwise, define $Y_{j}$ to be the set of vertices in $W_{j-1}$
where $\md+\mt_{W_{j-1}}$
is negative. If the process eventually exhausts all the vertices of the graph, then $W$ will be the empty set
(which occurs exactly when $\underline{d}$ is $u$-reduced; 
see \cite[Lemma 2.6]{Luo}).

\begin{remark}\label{R:decomposition}
For every $\md$ and $u$, vertices in $Y_j$ or $W$  can be characterized as follows.
 For $j=1,\dots,l -1$, we have
\begin{equation}
v\in Y_j  \Leftrightarrow v\notin Y_{j-1}   \mbox{ and } \md(v)<v\cdot (Y _{0}\sqcup Y_1\ldots\sqcup Y _{j-1}).
\end{equation}
\end{remark}
 
\begin{example}
\label{Dharex}
In the picture below we illustrate an example of a graph $G$ with vertex set $\{v_0,v_1,v_2,v_3,v_4\}$ and a divisor $\md=(0,1,2,4,4)\in \Div(G)$. Then Dhar decomposition of $\md$ with respect to $u=v_0$ is as follows
$$Y_0\sqcup Y_1\sqcup Y_2\sqcup W=\{v_0\}\sqcup \{v_1\}\sqcup  \{v_2\}\sqcup   \{v_3 ,v_4\} .$$

 On the other hand, the decomposition with respect to $u=v_4$ is simply:
 $$Y_0\sqcup W=\{v_4\}\sqcup \{v_0,v_1,v_2,v_3\}.$$

\

\unitlength .8mm  
\linethickness{0.4pt}
\ifx\plotpoint\undefined\newsavebox{\plotpoint}\fi  
\begin{picture}(97.25,65.75)(-10,52)
\put(13,86){\circle*{1.8}}
\put(56,87){\circle*{1.8}}
\put(95.25,87.25){\circle*{1.8}}
\put(54.75,113){\circle*{1.8}}
\put(35.5,66.5){\circle*{1.8}}
\qbezier(13.25,86.25)(30.625,82.75)(35.5,66.25)
\qbezier(13,86)(18.875,69.875)(35.25,66.25)
\qbezier(13.25,86.5)(19.25,109.75)(54.25,113)
\qbezier(13.5,86.25)(35,97)(54.5,112.75)
\qbezier(54.5,113)(79.625,105.875)(95.25,87.25)
\qbezier(54.5,113)(55,101.375)(55.5,87.25)
\qbezier(13,86.25)(35.125,87)(55.75,86.75)
\qbezier(55.75,86.75)(76.5,97.375)(95.25,87.5)
\qbezier(95.25,87.5)(77.375,78)(56,86.5)
\qbezier(56,86.5)(41.75,82.625)(35.5,66.25)
\qbezier(35.5,66.25)(51.375,71.875)(55.75,87)
\qbezier(35.5,66.75)(64.375,69.875)(94.75,87.5)
\qbezier(94.75,87.5)(72.25,59.5)(35.75,66.5)
\qbezier(13,86.5)(26.625,12.5)(95.25,87.5)
\put(35,72.75){\makebox(0,0)[cc]{$v_0$}}
\put(52.25,90.75){\makebox(0,0)[cc]{$v_1$}}
\put(96.5,92){\makebox(0,0)[cc]{$v_2$}}
\put(53.25,116.75){\makebox(0,0)[cc]{$v_4$}}
\put(9.5,90.25){\makebox(0,0)[cc]{$v_3$}}
\put(10,84.5){\makebox(0,0)[cc]{$4$}}
\put(35,63){\makebox(0,0)[cc]{$0$}}
\put(58.25,91.5){\makebox(0,0)[cc]{$1$}}
\put(97.25,83.5){\makebox(0,0)[cc]{$2$}}
\put(59.5,116){\makebox(0,0)[cc]{$4$}}
\end{picture}
\end{example}

Let $\md\in \Div(G)$ be an effective  divisor   which is non-reduced with respect to a fixed vertex $u$ of $G$.
Then, by adding a suitable set of edges to $G$, one can construct a graph on which $\md$ is $u$ reduced.
\begin{defi}
\label{Dhardef}
Let  $u$ be a vertex of a graph   $G$. Let $\md\in \Div(G)$ be such that $\md(v)\geq 0$ for every $v\in V(G)\smallsetminus \{u\}$.
A {\it saturation} of $G$ (or a $G$-{\it saturation}) with respect to $u$ and $\md$ is a graph $G'$ satisfying the following requirements:
\begin{enumerate}[(a)]
 \item
 $G$ is a spanning subgraph of $G'$;
\item
$\md$ is $u$-reduced as a divisor on $G'$;
\item
every edge in $E(G')\smallsetminus E(G)$ is adjacent to $u$.
\end{enumerate}
\end{defi}
For instance, in   Example~\ref{Dharex},
 a $G$-saturation     with respect to $\md$  and $v_0$
can be obtained by adding one edge  between $v_0$ and $v_3$ and  one edge  between $v_0$ and $v_4$.
And a $G$-saturation     with respect to $\md$  and $v_4$    is given adding one edge between $v_0$ and $v_4$, or also one edge between
between $v_1$ and $v_4$.

\begin{remark} \label{Dharrk}
It is trivial to check that such saturations always exist.

More precisely, consider the Dhar decomposition \eqref{dhareq} with respect to $\md$ and $u$.
Then, as $G$ is connected,  by adding $\md(w)$ edges between $w$ and $u$  for every $w\in W$, we get a $G$-saturation with respect to $\md$ and $u$.
\end{remark}

 \begin{prop}
\label{GG'}
Let $\md$ be an effective divisor on a    loopless, weightless graph $G$, and
let $u\in V(G)$ be such that $\md(u)=\agd$.
Let $G'$ be a  $G$-saturation with respect to $u$ and $\md$. Set $m=|E(G')\smallsetminus E(G)|$; then
 $$
r_G(\md)\leq   \agd+m
 $$
 (equivalently:   $r_G(\md)\leq   r_{G'}(\md)+m$).
  \end{prop}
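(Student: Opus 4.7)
The plan is first to reformulate via Proposition~\ref{rank}. Since $G$ is loopless and weightless and every edge in $E(G')\setminus E(G)$ joins $u$ to a distinct vertex, $G'$ is also loopless and weightless. Proposition~\ref{rank} applied to $G'$, together with the hypotheses $\md(u)=\ell_{G'}(\md)=\agd$ and $\md$ being $u$-reduced on $G'$, gives $r_{G'}(\md)=\agd$. Hence the proposition is equivalent to the inequality
\[
r_G(\md)\le r_{G'}(\md)+m.
\]

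I would prove the stronger statement obtained by dropping the reducedness hypothesis: for any graph $G'$ obtained from $G$ by adding $m$ edges each incident to $u$, and for any $\md\in\Div(G)$, $r_G(\md)\le r_{G'}(\md)+m$. The proof is by induction on $m$. The case $m=0$ is trivial; for the inductive step, after removing one of the added edges to obtain an intermediate graph $G''$ with $G\subset G''\subset G'$, the inductive hypothesis yields $r_G(\md)\le r_{G''}(\md)+(m-1)$, so it suffices to establish the single-edge case $r_{G''}(\md)\le r_{G'}(\md)+1$.

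The single-edge case rests on the algebraic identity
\[
\Prin(G'')+\Z(u-w)=\Prin(G')+\Z(u-w),
\]
where $uw$ is the removed edge. This follows from the direct computation that $\mt^{G'}_{\{v\}}-\mt^{G''}_{\{v\}}$ lies in $\Z(u-w)$ for every $v\in V$. Consequently, every $T\in\Prin(G'')$ admits a decomposition $T=T'+c(u-w)$ with $T'\in\Prin(G')$ and $c\in\Z$. Setting $r=r_{G''}(\md)$ and choosing an arbitrary effective $\me'$ of degree $r-1$, the hypothesis $r_{G''}(\md)\ge r$ applied to $\me'+u$ and to $\me'+w$ yields effective $F_u,F_w$ and principal $T_u,T_w\in\Prin(G'')$ with $\md-\me'=F_u+u-T_u=F_w+w-T_w$. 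Writing $T_u=T'_u+c_u(u-w)$ produces
\[
\md-\me'+T'_u=F_u+(1-c_u)u+c_u w,
\]
which is $\sim_{G'}\md-\me'$ and is effective precisely when $-F_u(w)\le c_u\le F_u(u)+1$; analogously, $\md-\me'+T'_w=F_w-c_w u+(c_w+1)w$ is effective precisely when $-F_w(w)-1\le c_w\le F_w(u)$.

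The main obstacle is the case analysis showing that at least one of these two ranges is satisfied. The linking equation $F_u-F_w-(u-w)=T_u-T_w$ produces the equivalence $F_u\sim_{G'}F_w+(c_u-c_w-1)(u-w)$, which couples $c_u$ and $c_w$; combined with the non-negativity of $F_u$ and $F_w$, this should exclude the bad configurations where neither range holds. Overcoming this delicate combinatorial step—possibly by exploiting the stronger fact that $\md-\me'-v\sim_{G''}\text{effective}$ for \emph{every} vertex $v$, not only $v\in\{u,w\}$—is the technical crux of the proof.
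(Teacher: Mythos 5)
Your reformulation is correct: since $\md$ is $u$-reduced on $G'$ and $\md(u)=\agd=\ell_{G'}(\md)$, Proposition~\ref{rank} gives $r_{G'}(\md)=\agd$, and the identity $\Prin(G'')+\Z(u-w)=\Prin(G')+\Z(u-w)$ for a single added edge $uw$ is also right. But the proof is not complete, and the gap is exactly where you locate it. The two windows you derive are only guaranteed to contain $\{0,1\}$ (for $c_u$) and $\{-1,0\}$ (for $c_w$) when $F_u,F_w$ vanish at $u$ and $w$, and nothing in your argument bounds $c_u$ or $c_w$: the constant $c$ in $T=T'+c(u-w)$ is determined only modulo the order of $[u-w]$ in $\Jac(G')$, which can be arbitrarily large, and the linking relation $F_u\sim_{G'}F_w+(c_u-c_w-1)(u-w)$ constrains only the difference $c_u-c_w$, so it does not exclude both constants being simultaneously large. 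More seriously, your edge-by-edge induction forces you to prove the fully general claim ``$r_{G''}(\md)\le r_{G'}(\md)+1$ whenever one edge is added,'' with no reducedness hypothesis, because the intermediate graphs $G\subset G''\subset G'$ do not inherit the saturation property. This discards the one hypothesis the proposition actually supplies — that $\md$ is $u$-reduced on the \emph{final} graph $G'$ — and I do not know whether your hypothesis-free statement is even true; it is certainly not what the paper proves, and it would be a nontrivial general fact about Baker--Norine rank under edge deletion.

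The paper's proof avoids all of this by working with a single, explicitly chosen effective divisor and never comparing ranks on the two graphs. Writing $\ell=\agd$, letting $w_1,\dots,w_k$ be the endpoints other than $u$ of the added edges with multiplicities $m_i$ (so $\sum m_i=m$), one sets $\mc=\md-(\ell+1)u-\sum_i m_i w_i$, which is $\md$ minus an effective divisor of degree $\ell+1+m$; it then suffices to show $r_G(\mc)=-1$. If $\mc+\mt\ge 0$ for some $\mt\in\Prin(G)$, Remark~\ref{tZrk} produces a set $Z_k$ with $0\le(\mc+\mt)_{|Z_k}\le(\mc+\mt_{Z_k})_{|Z_k}\le\mc_{|Z_k}$, forcing $u\notin Z_k$ since $\mc(u)=-1$; and then, because the added edges all run from $u$ to the $w_i$, one checks $(\md+\mt'_{Z_k})_{|Z_k}=(\mc+\mt_{Z_k})_{|Z_k}\ge 0$ where $\mt'_{Z_k}$ is the principal divisor of $Z_k$ on $G'$ — contradicting the $u$-reducedness of $\md$ on $G'$. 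If you want to salvage your approach, you would either have to prove the general one-edge deletion inequality (a separate project) or reorganize the induction so that reducedness on $G'$ is actually used at each step; as written, the crux you flag is a genuine missing ingredient, not a routine verification.
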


\begin{proof}
Write $\ell =\agd$.
Recall that the edges in $E(G')\smallsetminus E(G)$ are all adjacent to $u$;
denote by $\{w_1,\ldots,w_k\}\subset V(G)\smallsetminus \{u\}$
the vertices  adjacent to the different edges in $E(G')\smallsetminus E(G)$. Set
\begin{equation}
 \label{mm'}
 m_i=(u\cdot w_i)_{G'}-(u\cdot w_i)_{G}
\end{equation}
(the subscripts $G$ and $G'$ indicate  the graph where the intersection product is computed).
We have  $\sum_{i=1}^k m_i=m$.

Write $$\mc=\md -(\ell+1)u-\sum_{i=1}^km_iw_i;$$ we have $\mc(u)=-1$.
It suffices to prove  that the rank of $\mc$ is $-1$.  By contradiction, suppose this is not the case;
then there exists $\mt\in \Prin(G)$ such that
$\mc+\mt\geq 0$. According to
 Remark~\ref{tZrk} we have
 $\mt=\sum_{i=1}^k i\cdot \mt_{Z_i}$ where
  $Z_1,\ldots Z_k$ are disjoint   sets of vertices with $Z_k\neq \emptyset$; by the same remark we have $\mt _{|Z_k} \leq (\mt_{Z_k})_{|Z_k}$. Summarizing:
\begin{equation}
 \label{summ}
0\leq  (\mc + \mt)_{|Z_k} =\mc_{|Z_k} + \mt _{|Z_k} \leq \mc_{|Z_k} + (\mt_{Z_k})_{|Z_k} \leq \mc_{|Z_k}
  \end{equation}
 (as $ (\mt_{Z_k})_{|Z_k}\leq 0$).  Hence  $u\notin Z_k$.  Write $\mt'_{Z_k}$ for the principal divisor corresponding to $Z_k$ in $G'$.
Pick $v\in Z_k$; if $v=w_i$ for some $i$, then $\mt'_{Z_k}(v) = \mt_{Z_k}(v) - m_i$ by \eqref{mm'}; otherwise we have  $\mt'_{Z_k}(v) = \mt_{Z_k}(v)$.
In either case for every $v\in Z_k$ we have
$$
(\md  + \mt'_{Z_k})(v) = (\mc + \mt_{Z_k})(v) \geq 0
$$
by \eqref{summ}; a contradiction to the reducedness of $\md$ on $G'$.
\end{proof}

\noindent
{\bf Example~\ref{Dharex} continued.}
Let us show that in Example \ref{Dharex} we have $r_G(\md)=2$.
Of course, $\ell_G(\md)=\md(v_0)=0$ and, as we already observed, a $G$-saturation with respect to $v_0$ can be obtained by adding two edges on $G$. Hence Proposition \ref{GG'}  applies with $m=2$, giving $r_G(\md)\leq 2$.
Let us now  check that $r_G(\md)\geq 2$. We have
$$
\md+\mt_{\{v_3,v_4\}} =(0,1,2,4,4)+(2,2,2,-4,-2)=(2,3,4,0,2)=:\md'.
$$
Hence it remains to check that $\md-(v_0+v_3)$ is equivalent to an effective divisor, which we do as follows:
$$
\md'+\mt_{\{v_1,v_2,v_4\}}=(2,3,4,0,2)+(4,-3,-3,4,-2)=(6,0,1,4,0).
$$

\section{The inequality $r_G(\delta)\geq \rgdel$}
\label{insec}
\subsection{Proof of the inequality}

Suppose $L$ is a line bundle on a  curve $X$ such that its multidegree
corresponds to a $u$-reduced divisor for some vertex $u$ of the dual graph of $X$. Then a  non-zero section of $L$   cannot vanish identically  on the component of $X$ corresponding to $u$. This is a special case of the next
lemma (namely the case when   $\md$ is $u$-reduced), which is similar to Lemma 4.9 in \cite{AC}.
Recall that for an effective divisor $\me$, we   introduced the effective divisor $\meg$  in Definition~\ref{defmeg}.
\begin{lemma}
\label{P:sections}Let $X$ be a nodal curve whose
dual graph is $G$. Let $L$ be a line bundle on $X$, and denote $\underline{d} = \underline{\deg}L$.
Suppose that for some   $u\in V(G)$, and  effective  divisor  $\underline{e}\in \Div(G)$,
the divisor $\underline{d}-\meg$ is $u$-reduced.
Then the space of global sections of $L$ vanishing identically on
$C_{u}$ has dimension at most $|\underline{e}|-\me(u)$.\end{lemma}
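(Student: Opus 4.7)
The plan is to filter the space $H := \{s \in H^0(X,L) : s|_{C_u}\equiv 0\}$ using the Dhar burning algorithm applied to the $u$-reduced divisor $\md-\meg$, and bound each graded piece by Clifford/Riemann-Roch on a single irreducible component of $X$.

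Since $\md-\meg$ is $u$-reduced, its Dhar decomposition (Subsection~\ref{S:Dhar}) exhausts $V(G)$, so $V(G)=\{u\}\sqcup Y_1\sqcup\cdots\sqcup Y_m$. I will choose any total ordering $v_1,\ldots,v_n$ of $V(G)\smallsetminus\{u\}$ compatible with this decomposition (vertices in $Y_j$ precede those in $Y_{j+1}$), set $B_i := \{u,v_1,\ldots,v_i\}$, and filter $H$ by $H_i := \{s \in H : s|_{C_{v_j}}\equiv 0 \text{ for every } j\leq i\}$. Restriction to $C_{v_{i+1}}$ yields a map $H_i \to H^0(C_{v_{i+1}},L|_{C_{v_{i+1}}})$ with kernel $H_{i+1}$, and since every $s\in H_i$ vanishes on all $C_w$ with $w\in B_i$, its image lies in the subspace of sections vanishing at the $v_{i+1}\cdot B_i$ nodes linking $C_{v_{i+1}}$ to those components. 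Thus $H_i/H_{i+1}$ injects into $H^0(C_{v_{i+1}},M_i)$, where $M_i$ is a line bundle on the irreducible curve $C_{v_{i+1}}$ of degree $d_i := \md(v_{i+1})-v_{i+1}\cdot B_i$.

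The Dhar burning rule for $v_{i+1}\in Y_j$, read on $\md-\meg$, states $\md(v_{i+1})-v_{i+1}\cdot(Y_0\cup\cdots\cup Y_{j-1})<\meg(v_{i+1})$; since $B_i\supseteq Y_0\cup\cdots\cup Y_{j-1}$, this forces $d_i\leq \meg(v_{i+1})-1=\me(v_{i+1})^{g(v_{i+1})}-1$. Because Clifford and Riemann-Roch hold on the irreducible (nodal) curve $C_{v_{i+1}}$ of arithmetic genus $g(v_{i+1})$, one has $\dim H^0(C_{v_{i+1}},M_i)\leq 1+(d_i)_{g(v_{i+1})}$ when $d_i\geq 0$, and zero otherwise. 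A brief check from Lemma~\ref{triv} gives $(e^g-1)_g=e-1$ for $e\geq 1$, so by monotonicity of $d\mapsto d_g$ one obtains $\dim H_i/H_{i+1}\leq \me(v_{i+1})$ in every case (when $\me(v_{i+1})=0$, we have $\meg(v_{i+1})=0$, which forces $d_i<0$ and makes the bound trivial). Summing over $i$ collapses the filtration to $\dim H\leq\sum_{v\neq u}\me(v)=|\me|-\me(u)$.

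The main obstacle is bridging the combinatorial Dhar inequality with the analytic Clifford/Riemann-Roch bound on a single component: this works precisely because Clifford's inequality is available on the irreducible curves $C_v$ (even when they carry internal nodes coming from loops of $G$), although it fails on the reducible $X$. Some careful bookkeeping is still needed for the degenerate cases ($\me(v_{i+1})=0$, $d_i<0$, and the two regimes $e\leq g$ versus $e>g$ in the identity $(e^g-1)_g=e-1$), but these checks are routine once the Dhar-compatible filtration is in place.
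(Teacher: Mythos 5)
Your proof is correct and follows essentially the same route as the paper's: the paper also filters the space of sections vanishing on $C_u$ along the Dhar decomposition of $\md-\meg$ (grouping the vertices day-by-day via a descending induction rather than one vertex at a time) and bounds each graded piece by the same Clifford/Riemann--Roch estimate on the irreducible components, phrased there as Remark~\ref{inflated} (namely $\deg M < e^{g}$ forces $h^0(C_v,M)\le e$) instead of your equivalent computation $(e^g-1)_g=e-1$. The differences are purely cosmetic.
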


\begin{proof}
Consider   the Dhar decomposition associated to
$\underline{d}-\meg$ with respect to  $u$   as in \ref{S:Dhar}; since
 $\underline{d}-\meg$ is $u$-reduced, $W$ is
empty and $V=Y_{0}\sqcup\ldots\sqcup Y_{l}$.

For each $0\leq j\leq l$, denote by $\Lambda_{j}\subset H^0(X,L)$ the space of
sections of $L$ vanishing on the components of $X$ corresponding
to the vertices of $Y_{0}\sqcup\ldots\sqcup Y_{j}$.  We must prove that $\dim \Lambda_0 \leq  |\underline{e}|-\me(u)$.
We will proceed  by
descending induction on $0\leq j\leq l$, showing that
\begin{equation}
 \label{Lambdaj}
 \dim\Lambda_{j}\leq \sum_{i=j+1}^{l}|\underline{e}_{|Y_{i}}|.
\end{equation}
For $j=l$, the claim is obvious, since $\Lambda_{l}$ is the space of
sections vanishing on the entire curve, and its dimension is $0$.
Now, assume   that \eqref{Lambdaj} holds and let us
consider $\Lambda_{j-1}$.

Let $v$ be any vertex of $Y_{j}$, and let $D_v$ be
the divisor on $C_{v}$ consisting exactly of the intersection points
of $C_{v}$ with the components of $X$ corresponding to $Y_{0}\sqcup\ldots\sqcup Y_{j-1}$. By Remark~\ref{R:decomposition}, we have
$$
(\underline{d}- \meg)(v) - \deg(D_v) < 0.
$$
Hence
$$
\deg_{C_v}L(-D_v)<\meg(v),
$$
and therefore, by Remark~\ref{inflated},   we get
$$h^{0}(C_{v},L(-D_{v}))\leq\underline{e}(v).$$
We obtain
\begin{equation}
 \label{Bigoplus}
\dim \Bigl(\underset{v\in Y_{j}}{\bigoplus}H^{0}(C_{v},L(-D_{v}))\Bigr)\leq \sum_{v\in Y_{j}}\underline{e}(v)=|\underline{e}_{|{Y_{j}}}|.
\end{equation}
Now, consider the exact sequence
\[
0\to\Lambda_{j}\rightarrow\Lambda_{j-1}\overset{\alpha}{\rightarrow}\underset{v\in Y_{j}}{\bigoplus}H^{0}(C_{v},L(-D_{v}))\mbox{,}
\]
where $\alpha$ is the map restricting a section to each component. Then
$$\dim\Lambda_{j-1} \leq  \dim\Lambda_{j}+\dim \Bigl(\underset{v\in Y_{j}}{\bigoplus}H^{0}(C_{v},L(-D_{v}))\Bigr)\leq\sum_{i=j}^{l}|\underline{e} _{|Y_{i}}|,$$
 where the last inequality follows from the induction hypothesis and \eqref{Bigoplus}. The proof is complete.
\end{proof}

\begin{thm}\label{rankthm}
Let $\delta$ be a divisor class on a graph $G$. Then
\[
\ralg(G,\delta)\leq r_{G}(\delta)\mbox{.}
\]
\end{thm}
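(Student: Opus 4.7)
Fix a curve $X\in\MaG$ and set $r:=r_G(\delta)$. By the definition of the algebraic rank it suffices to produce, for this $X$, a single representative $\md\in\delta$ with $\rmax(X,\md)\leq r$. The plan is to choose an effective $\me\in\Div(G)$ of degree $r+1$ and a vertex $u\in V(G)$ so that $\md-\meg$ is $u$-reduced with $(\md-\meg)(u)<0$, and then combine Lemma~\ref{P:sections} with the Clifford--Riemann--Roch bound on the irreducible component $C_u$ to bound $h^0(X,L)$ uniformly in $L\in\Pic^{\md}(X)$.

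\textbf{Producing $\me$, $\md$, and $u$.} The identity $r_G(\delta)=r_{\hhG}(\widehat{\delta})=r$, together with the definition of combinatorial rank on the loopless weightless graph $\hhG$, yields an effective $\hhe\in\Div(\hhG)$ of degree $r+1$ such that $\widehat{\delta}-[\hhe]$ contains no effective divisor. I push $\hhe$ down to $G$ by setting $\me(v):=\hhe(v)+\sum_{k=1}^{g(v)}\hhe(z_v^k)$, so $|\me|=r+1$; the calculation inside the proof of Lemma~\ref{lemmaew} shows $\widehat{\meg}$ is equivalent on $\hhG$ to a divisor dominating $\hhe$, and therefore $[\md-\meg]$ on $G$ also contains no effective divisor. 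Since $\Div(G)=\Div(G_0)$ and $\Prin(G)=\Prin(G_0)$, this translates to $r_{G_0}([\md-\meg])=-1$; Lemma~\ref{E:reduced}(a) applied to the loopless weightless $G_0$ then produces $u\in V(G)$ together with a $u$-reduced $G_0$-representative $\md_1$ of $[\md-\meg]$ satisfying $\md_1(u)<0$. Setting $\md:=\md_1+\meg$, which lies in $\delta$, the difference $\md-\meg=\md_1$ is $u$-reduced on $G_0$ and, by Remark~\ref{basicred}(b), on $G$ as well.

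\textbf{Bounding $h^0$.} For any $L\in\Pic^{\md}(X)$, Lemma~\ref{P:sections} gives
\[
\dim\{\sigma\in H^0(X,L):\sigma|_{C_u}=0\}\leq |\me|-\me(u)=r+1-\me(u),
\]
and the restriction map to $C_u$ embeds the quotient into $H^0(C_u,L|_{C_u})$, so $h^0(X,L)\leq (r+1-\me(u))+h^0(C_u,L|_{C_u})$. The component $C_u$ is irreducible of arithmetic genus $g(u)$, and $\deg L|_{C_u}=\md(u)<\meg(u)$. A short case analysis separating $\me(u)\geq g(u)$ from $\me(u)<g(u)$, working from the definition of the inflation in Definition~\ref{defmeg}, shows $\md(u)_{g(u)}\leq \me(u)-1$; the Clifford--Riemann--Roch bound in Remark~\ref{rdg} on $C_u$ then gives $h^0(C_u,L|_{C_u})\leq \me(u)$ (the sub-case $\md(u)<0$ being trivial). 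Adding, $h^0(X,L)\leq r+1$, hence $r(X,L)\leq r$ for every $L\in\Pic^{\md}(X)$, giving $\rmax(X,\md)\leq r$ as needed. The technical heart is this last inequality $\md(u)_{g(u)}\leq \me(u)-1$, where the specific form of $\meg$ (trading off $\me(v)$ against $g(v)$) is essential; the remainder of the argument is a careful matching between the combinatorial witness $\hhe$ on $\hhG$ and the geometric bound coming from Lemma~\ref{P:sections}.
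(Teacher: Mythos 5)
Your argument is correct and is essentially the paper's proof run in the contrapositive direction: both hinge on Lemma~\ref{P:sections} applied to a $u$-reduced representative of $\delta-[\meg]$, the restriction sequence to $C_u$, and the Clifford--Riemann--Roch bound of Remark~\ref{inflated} on the component $C_u$; the paper fixes an arbitrary effective $\me$ of degree $s=\ralg(G,\delta)$ and uses a line bundle with $h^0(X,L)\geq s+1$ to force $\md(u)\geq \meg(u)$, concluding with Lemma~\ref{lemmaew}, whereas you start from a degree-$(r+1)$ witness of $r_G(\delta)=r$ and run the same estimate backwards to get $h^0(X,L)\leq r+1$. The one step you need that the paper does not --- transporting the witness from $\hhG$ down to a divisor $\me$ on $G$ so that $\delta-[\meg]$ has no effective representative, via the computation inside the proof of Lemma~\ref{lemmaew} --- is handled correctly, and your construction has the minor added feature that the bad multidegree $\md$ works uniformly for every $X\in\MaG$.
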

\begin{proof}
Denote $s=\ralg(G,\delta)$. If $s=-1$ then the claim is obvious, since the combinatorial rank is always bounded below by $-1$. Hence we may assume that $s\geq 0$.

In order to prove that $r_G(\delta) \geq s$, by Lemma~\ref{lemmaew} it suffices  to show that for any effective divisor $\underline{e}$
with $|\me|=s$, the divisor class $\delta$ admits a representative $\md$ such that  $\md -\meg\geq 0$.

Let  $\underline{e}$ be such a divisor, and let $u\in V$ be a fixed vertex.
  Using Fact~\ref{redfct} we have that there exists a representative $\md$ for $ \delta$
 such that $\underline{d}-\meg$
is $u$-reduced. Now,  by definition, $\underline{d}-\meg$
is effective on $V\smallsetminus \{u\}$, so it remains to show that
\begin{equation}
 \label{34thesis}
\underline{d}(u)-\meg(u)\geq 0.
\end{equation}
Since $\ralg(G,\delta)=s$,
there exists a curve $X\in \MaG$ and
 a line bundle $L\in \Pic^{\md}(X)$ such that  $h^0(X,L)\geq s+1$.
Consider the exact sequence
\[
0\to\mbox{ker}(\pi)\to H^{0}(X,L)\overset{\pi}{\to}H^{0}(C_{u},L_{C_u})\mbox{,}
\]
where $\pi$ is the restriction of sections to $ C_{u} $.
The space $\ker(\pi)$ is exactly the set of global sections of $L$ vanishing on $C_{u}$, so by Lemma~\ref{P:sections},
$$\dim \ker(\pi)\leq s- \me(u).$$ From the above exact sequence we obtain
$$
h^{0}(C_u,L_{C_{u}})\geq
h^{0}(X,L)-\dim(\ker\pi)\geq s+1-s+\me(u)=\me(u)+1.
$$
Now Remark~\ref{inflated} yields
$$\deg _{C_{u}} L\geq \me(u)^{g(C_u)},$$
in other words $\underline{d}(u)\geq\meg(u)$, which proves \eqref{34thesis} and the theorem.
\end{proof}
\begin{cor}
\label{cor0-1} Let $\delta$ be a divisor class on  $G$ such that  $r_G(\delta)\leq 0$. Then  
 for every $X\in \MaG$ we have $r(X,\delta)=r_G(\delta).$
In particular, 
$\ralg(G,\delta)=r_G(\delta).$
\end{cor}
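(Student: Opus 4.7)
The plan is to dispose of the two subcases $r_G(\delta)=-1$ and $r_G(\delta)=0$ separately. In both, one direction is immediate from Theorem~\ref{rankthm} combined with the definition $\ralg(G,\delta) = \max_{X\in\MaG} r(X,\delta)$, yielding
\[
r(X,\delta) \le \ralg(G,\delta) \le r_G(\delta)
\]
for every $X\in \MaG$. When $r_G(\delta) = -1$, this forces $r(X,\delta)\le -1$, and the always-valid bound $r(X,\delta)\ge -1$ settles that subcase.

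When $r_G(\delta)=0$, the same argument gives $r(X,\delta)\le 0$, and the real work is to show $r(X,\delta)\ge 0$, that is, that for \emph{every} representative $\md\in\delta$ there is $L\in\Pic^{\md}(X)$ with $h^0(X,L)\ge 1$. The starting point is the effective representative $\md_0\in\delta$ guaranteed by $r_G(\delta)\ge 0$: choosing a smooth point $p_v\in C_v$ for each $v$, the line bundle $L_0:=\O_X(\sum_v\md_0(v)\,p_v)$ lies in $\Pic^{\md_0}(X)$ and carries a tautological section. The obstruction is that arbitrary $\md\in\delta$ may have negative entries, so they cannot directly be written as the multidegree of an effective divisor supported on smooth points.

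To treat such $\md$, I would introduce for each $v\in V$ a ``twist'' line bundle $T_v\in\Pic(X)$ of multidegree $\mt_{\{v\}}$ that carries a nonzero global section. Fix a regular one-parameter smoothing $\X\to B$ of $X$, which exists by standard deformation theory of nodal curves, and set $T_v:=\O_{\X}(C_v)|_X$; a short intersection-theoretic computation, using that $X$ is a fiber of $\phi$, gives $\mdeg T_v = \mt_{\{v\}}$, while the defining section of $C_v\subset\X$ restricts to a nonzero section of $T_v$ vanishing identically on $C_v$. Given $\md\in\delta$, chip-firing lets me write $\md-\md_0 = \sum_v f(v)\,\mt_{\{v\}}$ with $f:V\to\Z$, and the identity $\sum_v \mt_{\{v\}} = 0$ in $\Prin(G)$ permits adding a constant to $f$ so that $f(v)\ge 0$ for every $v$. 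Then $L:=L_0\otimes\bigotimes_v T_v^{\otimes f(v)}$ satisfies $\mdeg L = \md$ and inherits a nonzero global section as a product of those of its factors, so $\rmax(X,\md)\ge 0$. Since this holds for all $\md\in\delta$, we conclude $r(X,\delta) = \min_{\md\in\delta}\rmax(X,\md)\ge 0$.

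The ``in particular'' statement is then immediate from $\ralg(G,\delta)=\max_X r(X,\delta)$. The essential subtlety is the twist: since the components $C_v$ are not Cartier divisors on the singular curve $X$, the line bundle $\O_X(C_v)$ is not intrinsically defined on $X$, and one must pass to a regular smoothing $\X$ in which each $C_v$ becomes an honest Cartier divisor in order to construct the $T_v$. Once this is in place, the rest reduces to routine bookkeeping.
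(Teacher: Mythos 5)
Your reduction to showing $r(X,\delta)\ge 0$ for every $X\in\MaG$ when $r_G(\delta)=0$ (via Theorem~\ref{rankthm} and the definitions) matches the paper, and producing a section in every multidegree of $\delta$ is a viable strategy; but one step, as written, is false and needs repair. On a reducible curve the product of nonzero global sections can be the zero section: if $s$ vanishes identically on $C_v$ and $t$ vanishes identically on $C_w$, $v\ne w$, and these are the only components, then $st\equiv 0$. Your candidate section of $L=L_0\otimes\bigotimes_v T_v^{\otimes f(v)}$ is the product of the tautological section of $L_0$ (not identically zero on any component) with $f(v)$ copies of $s_{C_v}|_X$ for each $v$, and $s_{C_v}|_X$ vanishes identically on $C_v$; hence the product is nonzero on $C_w$ precisely when $f(w)=0$, and is the zero section whenever $f(v)\ge 1$ for all $v$. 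Merely arranging $f\ge 0$ does not exclude this. The fix is cheap: normalize $f$ by subtracting $\min_v f(v)$, so that $f\ge 0$ \emph{and} $f(w)=0$ for some $w$; then the product does not vanish identically on $C_w$ and is therefore a nonzero section. With that observation added, your argument goes through (the existence of a regular one-parameter smoothing of a nodal curve, which you need in order to define $T_v=\O_{\X}(C_v)|_X$, is standard and is in any case the ambient setting the paper itself uses).

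Granting the fix, your route is genuinely different from the paper's. The paper never leaves the curve $X$: writing $\mc+\mt=\md$ with $\md$ effective and $\mt$ principal, it invokes Remark~\ref{tZrk} to find a subcurve $Z$ with $\mt_{|Z}\le(\mt_Z)_{|Z}$, chooses $M\in\Pic^{\mc}(X)$ whose restriction to $Z$ is $L_Z(Z\cdot Z^c+E)$ with $E$ effective, and extends sections of $M_Z(-Z\cdot Z^c)$ by zero across $Z^c$ to get $r(X,M)\ge r(Z,L_Z)\ge 0$. Your twister construction buys a uniform, explicit line bundle with a section in every multidegree of the class, at the price of choosing a smoothing and of the vanishing bookkeeping above; the paper's localization to $Z$ is intrinsic to $X$, its only combinatorial input is Remark~\ref{tZrk}, and it is the template reused verbatim in the proof of Proposition~\ref{RRthm}.
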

\begin{proof}
The case  $r_G(\delta)= -1$  is obvious. Suppose  $r_G(\delta)= 0$; by the previous theorem it suffices to show that for any
$X\in \MaG$ we have $r(X,\delta)\geq 0$ .
Let  $\md\in\delta$ be such that $\md\geq 0$, now let $L=\mathcal O_X(D)$ where $D$ is an effective Cartier divisor of multidegree $\md$; then
 $r(X,L)\geq 0$ and hence
  $\rmax(X,\md)\geq 0$.

Now, let $\mc\in\delta$ be a different representative for $\delta$ and write $\mc+\mt=\md$,
for some  non-trivial $\mt\in \Prin(G)$. By Remark~\ref{tZrk},  we have
\begin{equation}
 \label{mtZ0}
\mt_{|Z}\leq (\mt_{Z})_{|Z},
\end{equation}
for some non-empty $Z\subsetneq V$.  We shall abuse notation and denote by $Z$ and $Z^c$ the subcurves of $X$  whose components correspond to the vertices in $Z$
and $Z^c$ respectively.
We have $r(Z, L_Z)\geq 0$, of course.

Let
  $Z\cdot Z^c\in \Div(Z)$ be the  (effective, Cartier) divisor cut on $Z$ by $Z^c$;
 its multidegree, $ \mdeg_ZZ\cdot Z^c$, satisfies
\begin{equation}
 \label{mZZ0}
  \mdeg_ZZ\cdot Z^c= (\mt_{Z^c})_{|Z}=-(\mt_{Z})_{|Z}.
\end{equation}
  Now,   for every  $M\in \Pic ^{\mc}(X)$  we have
\begin{equation}
 \label{MMc}
r(X,M)\geq r(Z, M_Z(-Z\cdot Z^c))
\end{equation}
 (any section of $M_Z(-Z\cdot Z^c)$ vanishes on $Z\cap Z^c$ and hence can be glued to the zero section on $Z^c$).
Moreover, using \eqref{mZZ0} and \eqref{mtZ0}
$$
\mdeg _ZM(-Z\cdot Z^c)=\mc_{|Z}-\mdeg_ZZ^c= \mc_{|Z} + (\mt_{Z})_{|Z}\geq \mc_{|Z}+\mt_{|Z}=\md_{|Z}.
$$
The above inequality implies that we can  pick $M\in \Pic ^{\mc}(X)$   such that its restriction to $Z$ satisfies
$M_Z=L_Z(Z\cdot Z^c+E)$ where $E$ is some effective Cartier divisor on $Z$. By \eqref{MMc} we have
$$
r(X,M)\geq r(Z,L_Z(Z\cdot Z^c+E-Z\cdot Z^c)) \geq r(Z,L_Z)\geq 0.$$
Hence $\rmax(X,\mc)\geq 0$, and hence    $r(X,\delta)\geq 0$, as required.
\end{proof}

 \begin{cor}
Let $G$ be a  graph of genus $g$ and $\delta\in \Pic^d(G)$ with $d\geq 0$.
 If $r_G(\delta)=\min\{0,d-g\}$ then
$\rgdel= \rGdel$.
\end{cor}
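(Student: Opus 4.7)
The plan is to observe that the hypothesis $r_G(\delta) = \min\{0, d-g\}$ forces $r_G(\delta) \leq 0$, after which the conclusion is an immediate application of Corollary \ref{cor0-1}.

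More precisely, I would split into cases according to the sign of $d-g$. If $d \geq g$, then $\min\{0,d-g\} = 0$, and the hypothesis reads $r_G(\delta) = 0$. If instead $d < g$, then $\min\{0,d-g\} = d-g < 0$; since the combinatorial rank always satisfies $r_G(\delta) \geq -1$, the equality $r_G(\delta) = d - g$ is only consistent when $d = g - 1$, in which case $r_G(\delta) = -1$. When $d < g - 1$ the hypothesis is vacuous (no class $\delta$ can satisfy it), so the statement holds trivially. In every non-vacuous case we therefore have $r_G(\delta) \in \{-1, 0\}$, and in particular $r_G(\delta) \leq 0$.

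With this in hand, Corollary \ref{cor0-1} applies directly and gives $\ralg(G,\delta) = r_G(\delta)$, which is precisely the desired equality $\rgdel = \rGdel$. There is no real obstacle: the entire content of the statement is a packaging of Corollary \ref{cor0-1} together with the elementary observation that the hypothesis pins $r_G(\delta)$ to the bottom range $\{-1, 0\}$ where the algebraic and combinatorial ranks are already known to coincide.
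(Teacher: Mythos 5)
Your proof is correct, but it takes a different route from the paper's. You reduce the statement to Corollary~\ref{cor0-1} by the elementary observation that the hypothesis $r_G(\delta)=\min\{0,d-g\}$, together with the universal bound $r_G(\delta)\geq -1$, pins the combinatorial rank to $\{-1,0\}$ (the case $d<g-1$ being vacuous); your case analysis here is accurate. The paper instead argues directly: Riemann--Roch on any curve $X\in\MaG$ gives $r(X,L)\geq \min\{0,d-g\}$ for \emph{every} line bundle $L$ of total degree $d$, hence $r(X,\delta)\geq r_G(\delta)$ for every $X$, and Theorem~\ref{rankthm} supplies the reverse inequality. The trade-off is this: the paper's argument is a self-contained two-line application of curve Riemann--Roch and yields the stronger intermediate fact that \emph{every} line bundle of degree $d$ already meets the bound, with no need to choose representatives or multidegrees carefully; your argument is shorter to state but leans on Corollary~\ref{cor0-1}, whose rank-zero case required a nontrivial construction (producing, for each representative $\underline{c}\in\delta$, a line bundle of multidegree $\underline{c}$ with a section via the $\underline{t}_Z$ manipulation). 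Both are legitimate, and both ultimately invoke Theorem~\ref{rankthm} for the inequality $\ralg(G,\delta)\leq r_G(\delta)$.
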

\begin{proof}
By Riemann-Roch, for all $X\in \MaG$ and all $L\in \Pic^d(X)$ we have $r(X,L)\geq \min\{0,d-g\}$. By Theorem~\ref{rankthm} we are done.
\end{proof}
In \cite{KY1} and \cite[Thm 1.1 and Thm 1.2]{KY2} the authors prove the inequality $\rgdel\geq  \rGdel$   in certain cases;  combining with Theorem~\ref{rankthm}, we have the following partial answer to Problem~\ref{toy2}.
\begin{cor}
 \label{KY}
Let $G$ be a graph.
We  have $\rgdel= \rGdel$ for every   $\delta\in \Pic^d(G)$ in the following cases.
\begin{enumerate}[(a)]
\item ${\rm{char}} (k) \neq 2$ and $G$ is hyperelliptic.
\item  $G$ has genus 3 and it is not  hyperelliptic.
\end{enumerate}
\end{cor}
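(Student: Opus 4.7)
The plan is straightforward since this corollary is meant to be a combination of two inequalities going in opposite directions. First, I would invoke Theorem~\ref{rankthm}, which has just been proved, to get the universal inequality
\[
\ralg(G,\delta)\leq r_G(\delta)
\]
for every graph $G$ and every divisor class $\delta\in\Pic(G)$; in particular, this holds in both cases~(a) and~(b) without any further work.

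For the reverse inequality, I would cite the results from the companion papers \cite{KY1} and \cite{KY2}. Specifically, Theorem~1.1 of \cite{KY2} asserts the inequality $\ralg(G,\delta)\geq r_G(\delta)$ when $G$ is hyperelliptic, provided the characteristic of the base field $k$ is different from $2$; this takes care of case~(a). Theorem~1.2 of \cite{KY2} gives the analogous inequality for non-hyperelliptic graphs of genus~$3$; this covers case~(b). In both cases, combining with Theorem~\ref{rankthm} yields the equality $\ralg(G,\delta)=r_G(\delta)$.

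Since the non-trivial part (the reverse inequality, which requires finding, for each $\delta$, a curve $X\in\MaG$ and a line bundle of multidegree in $\delta$ whose algebraic rank matches $r_G(\delta)$) is entirely handled by the cited work, there is no additional obstacle inside this excerpt; the corollary is a clean assembly. The main point of the proof is therefore to verify that the hypotheses of Theorems 1.1 and 1.2 of \cite{KY2} are exactly those appearing in~(a) and~(b), and that the notion of algebraic rank used there coincides with our Definition~\ref{ardef} (which it does, as the authors work within the same framework set up in \cite{Cjoe}).
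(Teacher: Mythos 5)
Your proposal is correct and matches the paper's argument exactly: the corollary is stated immediately after the remark that \cite{KY1} and \cite[Thm 1.1 and Thm 1.2]{KY2} establish $\ralg(G,\delta)\geq r_G(\delta)$ in precisely these two cases, and the paper likewise obtains equality by combining those results with Theorem~\ref{rankthm}. Nothing further is needed.
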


\subsection{Clifford inequality for the algebraic rank}
It  is well known that Clifford inequality fails trivially  for reducible curves; in fact for any
reducible curve $X$ of genus $g$ and any integer $d$ with $0\leq d\leq 2g-2$,
there exist infinitely many $L\in \Pic^d (X)$ such that $r(X,L)>\lfloor d/2\rfloor$
(see \cite[Proposition 1.7 (4)]{Cjoe}).
But consider the following question.

{\it Pick a graph $G$ of genus $g$ and $\delta\in \Pic^d(G)$ with
$0\leq d \leq 2g-2$.

\noindent
Does there exist a multidegree  $\md\in \delta$ such that for every  $X\in \MaG$ and every $L\in \Pic^{\md}(X)$ we have
$r(X,L)\leq \lfloor d/2\rfloor$?}

Apart from some special cases (see \cite{Ccliff}), the answer to this question
was not known;
we can now answer it affirmatively in full generality.

\begin{prop}[Clifford inequality]
\label{cliffcor}Let $G$ be a graph of genus $g$ and $\delta\in \Pic^d(G)$ with $0\leq d\leq 2g-2$.
Then
$$
\ralg(G,\delta)\leq \lfloor d/2\rfloor
$$
(that is $r(X,\delta)\leq \lfloor d/2\rfloor$ for every $X\in \MaG$).
\end{prop}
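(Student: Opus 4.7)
The plan is to reduce the statement to the combinatorial Clifford inequality, which has already been recorded in the paper, via the comparison theorem just proved.

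First, I would invoke Theorem~\ref{rankthm}, which gives $\ralg(G,\delta) \leq r_G(\delta)$ for every divisor class $\delta$. This immediately converts the algebro-geometric Clifford problem into a purely combinatorial one, which is the point of the remark before the statement: the Clifford inequality is false for reducible curves individually, but passing through the combinatorial rank tames the pathology because the minimum over representatives eliminates the ``bad'' multidegrees.

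Next, I would apply Remark~\ref{rdg}, which records that for any $\md \in \Div(G)$ on a graph of genus $g$, one has $r_G(\md) \leq d_g = \max\{d - g, \lfloor d/2 \rfloor\}$ (a consequence of the Baker--Norine Clifford inequality together with Riemann--Roch for graphs, extended to the weighted case). Under the hypothesis $0 \leq d \leq 2g - 2$, one checks at once that $d - g \leq \lfloor d/2 \rfloor$, so $d_g = \lfloor d/2 \rfloor$. Chaining the two inequalities yields
\[
\ralg(G,\delta) \leq r_G(\delta) \leq \lfloor d/2 \rfloor,
\]
which is the desired bound. The parenthetical reformulation $r(X,\delta) \leq \lfloor d/2 \rfloor$ for every $X \in \MaG$ is automatic from the definition $\ralg(G,\delta) = \max_X r(X,\delta)$.

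There is essentially no obstacle here: all the work has been done in Theorem~\ref{rankthm}, which is the hard part that ``could not be proved using algebro-geometric methods'' (as the authors note), since one must leave the world of individual curves and pass through the graph-theoretic rank in order to recover the Clifford bound.
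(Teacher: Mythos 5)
Your proof is correct and follows exactly the paper's route: the paper's proof is the one-line "immediate consequence of Theorem~\ref{rankthm} and the Clifford inequality for graphs," and you have merely spelled out the (correct) verification that $d_g=\lfloor d/2\rfloor$ when $0\leq d\leq 2g-2$. Nothing to change.
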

\begin{proof}
Immediate consequence of  Theorem~\ref{rankthm} and Clifford inequality for graphs \cite[Corollary 3.5]{BNRR}
(which, as we already mentioned, extends trivially to graphs with loops and weights).
\end{proof}

In other words, for every $X$ there exists a multidegree $\md\in \delta$ such that every $L\in \Pic^{\md}(X)$  satisfies
Clifford inequality. The following problem naturally arises.
\begin{question}
For which multidegrees does Clifford inequality hold? Do these multidegrees depend on the curve $X$, or can they be combinatorially characterized (i.e. depend only on $G$ and $d$)?
 \end{question}

A few special cases of this question are answered in \cite{Ccliff}, namely $|\md|\leq 4$, or $|V(G)|=2$. A general answer is not known.

\subsection{Reduction to loopless graphs}
Let $G$ be a graph with a loop $e$ based at the vertex $v$.
Denote by $G^{\bullet}$ the graph obtained from $G$ by inserting a weight-zero vertex, $u$,  in $e$.
There is a natural map
$$
\Div(G)\la \Div (G^{\bullet});\quad \quad \md \mapsto \md^{\bullet}
$$
such that $\md^{\bullet}(u)=0$ and $\md^{\bullet}$ is equal to $\md$ on the remaining vertices of $\md$.
The above map is a group homomorphism and  sends $\Prin (G)$ into $\Prin (G^{\bullet})$, hence
we also have  a map
$$
\Pic(G)\la \Pic (G^{\bullet});\quad \quad \delta \mapsto \delta^{\bullet}.
$$
By the definition of combinatorial rank, every $\md \in \Div(G)$ satisfies
$$
r_G(\md)=r_{G^{\bullet}}(\md^{\bullet}).
$$
Let now $X\in \MaG$ and let $X^{\bullet}\in M^{\operatorname {alg}}(G^{\bullet})$ be the curve obtained by ``blowing-up" $X$ at  the node $N_e$ corresponding to the loop $e$, i.e.
$$
X^{\bullet}=Y\cup E
$$
 where $Y$ is the desingularization of $X$ at $N_e$ and $E\cong \PP^1$ is attached to $Y$ at the branches over $N_e$.
 This process is invertible, i.e. given $X^{\bullet}$ one reconstructs $X$ by contracting the component $E$ to a node.
In conclusion, we have a bijection
$$
\MaG  \leftrightarrow M^{\operatorname {alg}}(G^{\bullet});  \quad \quad X \leftrightarrow X^{\bullet}
$$

\begin{prop}
 \label{proploopless} With the above notation, for any graph $G$, any
divisor  $\md\in \Div (G)$, any class
 $\delta \in \Pic(G)$, and any curve $X\in \MaG$  we have
 \begin{enumerate}[(a)]
\item
\label{mddelta=}
$\rmax(X,\md)=  \rmax(X^{\bullet},\md^{\bullet});$
\item
\label{deltageq}
 $r(X,\delta)\geq r(X^{\bullet}, \delta^{\bullet});$
\item
\label{alggeq}
$ \ralg (G,\delta)\geq \ralg (G^{\bullet},\delta^{\bullet}).
 $
 \end{enumerate}
\end{prop}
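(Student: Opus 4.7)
The plan is to prove the three parts in order, with (a) being the main geometric input from which (b) and (c) follow by purely formal manipulations.

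For part \eqref{mddelta=}, I would work with the contraction morphism $\pi\colon X^\bullet\to X$ that collapses the rational component $E$ to the node $N_e$. The key observation is that every $L^\bullet\in\Pic^{\md^\bullet}(X^\bullet)$ satisfies $L^\bullet|_E\cong\O_E$, since $\md^\bullet(u)=0$ and $E\cong\PP^1$. A direct local computation around $N_e$ (writing a section on $\pi^{-1}(U)$ as a triple $(s_E,s_1,s_2)$ with compatibility at $p_1,p_2$) shows two things: first, $\pi_*\O_{X^\bullet}=\O_X$; and second, for any $L^\bullet$ with degree $0$ on $E$, the sheaf $\pi_*L^\bullet$ is an invertible $\O_X$-module of multidegree $\md$. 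Combined with $R^1\pi_*L^\bullet=0$ (whose stalk at $N_e$ is $H^1(E,\O_E)=0$), the projection formula then gives
\[
H^0(X^\bullet,L^\bullet)=H^0(X,\pi_*L^\bullet),\qquad H^0(X,L)=H^0(X^\bullet,\pi^*L),
\]
where the second identity comes from $\pi_*\pi^*L=L$. Thus every $L\in\Pic^{\md}(X)$ yields $\pi^*L\in\Pic^{\md^\bullet}(X^\bullet)$ with the same rank, and every $L^\bullet\in\Pic^{\md^\bullet}(X^\bullet)$ yields $\pi_*L^\bullet\in\Pic^{\md}(X)$ with the same rank. Taking maxima on both sides proves \eqref{mddelta=}.

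For part \eqref{deltageq}, I would observe that the assignment $\md\mapsto\md^\bullet$ sends $\Prin(G)$ into $\Prin(G^\bullet)$, hence injects representatives of $\delta$ into representatives of $\delta^\bullet$ (but it is not surjective, as $\md^\bullet(u)=0$ while general representatives of $\delta^\bullet$ may assign nonzero value at $u$). Therefore, using \eqref{mddelta=},
\[
r(X^\bullet,\delta^\bullet)=\min_{\me\in\delta^\bullet}\rmax(X^\bullet,\me)\leq\min_{\md\in\delta}\rmax(X^\bullet,\md^\bullet)=\min_{\md\in\delta}\rmax(X,\md)=r(X,\delta),
\]
which is exactly \eqref{deltageq}. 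Part \eqref{alggeq} then follows immediately by taking the maximum over the bijection $\MaG\leftrightarrow M^{\operatorname{alg}}(G^\bullet)$: for each pair $(X,X^\bullet)$ we have $r(X,\delta)\geq r(X^\bullet,\delta^\bullet)$ by \eqref{deltageq}, so the same inequality holds for the maxima.

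The main obstacle will be the verification in \eqref{mddelta=} that $\pi_*L^\bullet$ is actually a line bundle on $X$ of multidegree $\md$; this is a local calculation at $N_e$ and does require some care in bookkeeping the transition functions on $E$, $D_1$, $D_2$. Once this is set up, everything else is formal: \eqref{deltageq} is just the observation that minimizing over a larger set gives a smaller value, and \eqref{alggeq} is just monotonicity of the maximum. The strict inequality in \eqref{deltageq}--\eqref{alggeq} is possible precisely because $\delta^\bullet$ admits more representatives than $\delta$, which is why the statements are inequalities rather than equalities.
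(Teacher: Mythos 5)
Your proposal is correct and follows essentially the same route as the paper: the key point in (a) is that a line bundle of degree $0$ on $E\cong\PP^1$ has constant sections along $E$, so $H^0$ is unchanged under the contraction $X^\bullet\to X$, and then (b) and (c) are the same formal min/max manipulations (minimizing over the strictly larger set of representatives of $\delta^\bullet$, then taking maxima over the bijection $\MaG\leftrightarrow M^{\operatorname{alg}}(G^\bullet)$). The only cosmetic difference is that you realize the correspondence $\Pic^{\md}(X)\leftrightarrow\Pic^{\md^\bullet}(X^\bullet)$ via $\pi_*$ applied to an arbitrary $L^\bullet$, whereas the paper invokes surjectivity of $\sigma^*$ and lets sections descend; these are two phrasings of the same fact.
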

\begin{proof}
We begin with  \eqref{mddelta=}.
Let $\sigma:X^{\bullet}\to X$ be the morphism contracting $E$, so that its restriction to $Y$ is birational onto $X$.
Then $\sigma$ induces an isomorphism
$$
\Pic^{\md}(X)\stackrel{\cong}{\la} \Pic^{\md^{\bullet}}(X^{\bullet});\quad \quad L\mapsto \sigma^*L.
$$
For any $L\in \Pic^{\md}(X)$ we have an injection $H^0(X,L)\ha H^0(X^{\bullet}, \sigma^*L)$, therefore
$\rmax(X,\md)\leq  \rmax (X^{\bullet},\md^{\bullet})$. For the opposite inequality, pick $L^{\bullet}=\sigma^*L\in  \Pic^{\md^{\bullet}}(X^{\bullet})$ and notice that    the sections of $L^{\bullet}$ are constant along $E$ (as $\deg_EL^{\bullet}=0$), hence they descend to sections of $L$.
\eqref{mddelta=} is proved.

For \eqref{deltageq} it is enough to show that
for every  $\md\in \Div (G)$ with $[\md]=\delta$ we have
\begin{equation}
\label{mddelta}
\rmax(X,\md)\geq r(X^{\bullet},\delta^{\bullet}).
\end{equation}
Fix such a $\md$; consider $\md^{\bullet}\in \Div(G^{\bullet})$.
Recall that $r(X^{\bullet},\delta^{\bullet})$ is the minimum   of all $\rmax(X^{\bullet}, \md')$ as $\md'$ varies in $\delta^{\bullet}$.
Hence  \eqref{mddelta} follows from  \eqref{mddelta=}.
We have thus proved  \eqref{deltageq}, and, since \eqref{alggeq} follows trivially from it, we are done.
\end{proof}

Let $G$ be a graph admitting some loops and, by abusing  notation, let $G^{\bullet}$ be the loopless graph obtained by inserting
a vertex in the interior of every loop. From the previous result  we derive the following:

\begin{prop}
\label{redloopless}
If $r_{G^{\bullet}}(\delta^{\bullet})=\ralg (G^{\bullet},\delta^{\bullet})$, then  $ r_G(\delta)=\ralg (G,\delta)$.
\end{prop}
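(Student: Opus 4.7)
The plan is to assemble a chain of (in)equalities forcing everything to collapse to the desired equality. The ingredients are already on the table: combinatorial rank is invariant under the blow-up-at-loops operation, algebraic rank can only go down under this operation, and algebraic rank is always bounded above by combinatorial rank.

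First I would record the behavior of combinatorial rank under the passage from $G$ to $G^{\bullet}$. The discussion preceding Proposition~\ref{proploopless} gives $r_G(\md) = r_{G^{\bullet}}(\md^{\bullet})$ for every $\md \in \Div(G)$, and since $\md \mapsto \md^{\bullet}$ descends to a well-defined homomorphism $\Pic(G) \to \Pic(G^{\bullet})$, picking any representative of $\delta$ yields
\[
r_G(\delta) = r_{G^{\bullet}}(\delta^{\bullet}).
\]
Next, I would invoke Proposition~\ref{proploopless}\eqref{alggeq}, which gives $\ralg(G, \delta) \geq \ralg(G^{\bullet}, \delta^{\bullet})$, and Theorem~\ref{rankthm}, which gives $\ralg(G, \delta) \leq r_G(\delta)$.

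Chaining these together with the hypothesis $r_{G^{\bullet}}(\delta^{\bullet}) = \ralg(G^{\bullet}, \delta^{\bullet})$ produces
\[
r_G(\delta) \;=\; r_{G^{\bullet}}(\delta^{\bullet}) \;=\; \ralg(G^{\bullet}, \delta^{\bullet}) \;\leq\; \ralg(G, \delta) \;\leq\; r_G(\delta),
\]
so every term is equal and in particular $r_G(\delta) = \ralg(G, \delta)$, as required.

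There is really no obstacle here: all three ingredients have already been established in the paper, and the argument is a two-line chase. The only thing to be a little careful about is that the identity $r_G(\md) = r_{G^{\bullet}}(\md^{\bullet})$ — which holds at the divisor level — passes to classes; this is immediate from the fact that $\md \mapsto \md^{\bullet}$ sends $\Prin(G)$ into $\Prin(G^{\bullet})$, as noted in the text.
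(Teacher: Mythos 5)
Your proof is correct and is essentially identical to the paper's: the same chain $r_G(\delta)=r_{G^{\bullet}}(\delta^{\bullet})=\ralg(G^{\bullet},\delta^{\bullet})\leq\ralg(G,\delta)\leq r_G(\delta)$ using the invariance of combinatorial rank, Proposition~\ref{proploopless}\eqref{alggeq}, and Theorem~\ref{rankthm}. The only cosmetic difference is that the paper explicitly notes that Proposition~\ref{proploopless} (stated for a single loop) must be iterated over all loops to reach $G^{\bullet}$, a point worth making explicit but entirely routine.
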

\begin{proof}

By iterating the construction described  at the beginning of the subsection, we have
a natural  map
$$
\Pic(G) \to \Pic (G^{\bullet}); \quad \quad \delta \mapsto \delta^{\bullet}
$$
such that $r_G(\delta) =r_{G^{\bullet}}(\delta^{\bullet})$.
By hypothesis  we have
$$
r_G(\delta) =r_{G^{\bullet}}(\delta^{\bullet})=\ralg(G^{\bullet},\delta^{\bullet}) \leq \ralg (G,\delta),
$$
where the last inequality follows from Proposition~\ref{proploopless}. By Theorem~\ref{rankthm} the statement follows.
\end{proof}

\section{When is $r_G(\delta)=  \rgdel$?}
\label{spesec}
The purpose of this section is to find cases   the answer to Problem~\ref{toy2} is affirmative.
Throughout the section we shall restrict our attention to weightless, loopless graphs, unless we specify otherwise.
 \subsection{Special algebraic curves}
Let $G$ be a weightless, loopless graph; we   now look for curves $X\in \MaG$ which are likely to realize
the  inequality $\RX\geq r_{G}(\md)$
for      $\md\in \Div(G)$.
 We shall  explicitly describe   some    such curves,  after  recalling some  notation
 (see \cite{gac}).

Let  $V$, $E$ and $H$ be,  respectively,  the set of  vertices, edges and half-edges of $G$.
We have the following structure maps: the {\it endpoint} map
$$
\epsilon: H\to V;
$$
the {\it gluing} map, which is surjective and two-to-one
$$
\gamma:H\to E.
$$
The gluing map $\gamma$ induces   a fixed-point-free involution on $H$, denoted by $\iota$,   whose orbits, written
 $[h,\ov{h}]$, are identified  with the edges of $G$.

For any $v\in V$, we denote by $H_v=\epsilon^{-1}(v)$ and $E_v=\gamma(\epsilon^{-1}(v))$  the sets of  half-edges and edges
adjacent to $v$.
We denote by
$$
H_{v,w}=H_v\cap \gamma^{-1}(E_w)
$$
(the set of half-edges adjacent to $v$ and glued to a  half-edge adjacent to $w$).
We obviously have $\iota(H_{v,w})=H_{w,v}$.

Let $X$ be a curve  dual to $G$;  we write $
X=\cup_{v\in V}C_v
$ as usual.
We have a set $P_v\subset C_v$  of labeled  distinct points of  $C_v$ mapping to smooth points of $C_v$:
$$
P_v:=\{p_h, \  \forall h\in H_v\} =\sqcup_{w\in V} P_{v,w},
$$
where
$$
P_{v,w}:=\{p_h, \  \forall h\in H_{v,w}\}\subset C_v.
$$
We will use the following explicit description
of $X$
\begin{equation}
\label{Xdec}
X=\frac{\sqcup_{v\in V}C_v}{\{p_h=p_{\ov{h}},\  \forall h\in H\}}.
\end{equation}

\begin{defi}
\label{specdef}
 Let $G$ be a weightless, loopless graph and  $X\in \MaG$.
$X$ is     {\it special}  if
there exists a collection   $$\{\phi_{v,w}: (C_v; P_{v,w})\la (C_w; P_{w,v}), \  \  \forall v,w\in V\},$$ where
$\phi_{v,w}$ is an isomorphism of pointed curves
such that for every $u,v,w\in V$ and $h\in H_{v,w}$ the following properties hold:
\begin{enumerate}[(a)]
\item
$\phi_{v,w}(p_h)=p_{\ov{h}}$;
\item
$\phi_{v,w}^{-1}=\phi_{w,v}$;
\item
$\phi_{v,u}=\phi_{w,u}\circ \phi_{v,w}$.
\end{enumerate}
If $G$ is  not connected, $X\in  \MaG$   is defined to be special if so is every connected component.
\end{defi}
\begin{example} If   $G$ has only vertices of valency at most $3$ then every curve $X\in \MaG$ is special.
 \end{example}
\begin{example}
\label{spex}
We say that  $G$ is a   {\it binary  graph} of genus $g$
if  $G$ consists two vertices joined by $g+1$ edges. If $G$ is a binary graph of genus $g\geq 2$, then
$\dim  \MaG = 2(g-2)$, and the locus of special curves in it has dimension $g-2$.
 \end{example}
\begin{remark}
Let $X$ be a special curve. Then every subcurve of $X$, and every partial normalization of $X$, is special.
Moreover, let $x\in X$ be a nonsingular point of $X$ lying in the irreducible component $C_u$;
then for every component $C_v$ of $X$ the curve
$$
X':=\frac{X}{x=\phi_{u,v}(x)}
$$
is also special. The quotient map $\pi:X\to X'$ describes $X$ as a partial normalization of $X'$.
We say that $X$ is dominates $X'$.
\end{remark}
\begin{lemma}
\label{existspecial}
For every weightless, loopless  graph $G$, the set $\MaG$ contains a special curve.
\end{lemma}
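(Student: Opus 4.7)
My plan is to build a special curve $X \in \MaG$ by hand, taking all components to be a fixed rational curve and choosing the marked points in a globally compatible way, so that every $\phi_{v,w}$ can be taken to be the identity map.

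First, since $G$ is weightless, every curve in $\MaG$ has all components of geometric genus $0$; so I would fix $C := \PP^1_k$ and, using that $k$ is algebraically closed (hence infinite) while $E=E(G)$ is finite, pick a family of pairwise distinct points $\{p_e : e \in E\} \subset C$, one for each edge of $G$.

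Next, for each $v \in V$ I would set $C_v := C$, and for each half-edge $h \in H_v$ I would define $p_h := p_{\gamma(h)} \in C_v$. Because $G$ is loopless, the restriction $\gamma_{|H_v} : H_v \to E_v$ is a bijection, so the points $\{p_h : h \in H_v\}$ are exactly the distinct smooth points $\{p_e : e \in E_v\}$ of $C_v$. I then define $X$ by the gluing formula \eqref{Xdec}; by construction, $X$ is a nodal curve whose dual graph is $G$, and hence $X \in \MaG$. Finally, I would take $\phi_{v,w} : (C_v; P_{v,w}) \to (C_w; P_{w,v})$ to be the identity map under the tautological identification $C_v = C = C_w$.

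It remains to check the three axioms of Definition~\ref{specdef}. Conditions (b) and (c) are immediate from $\mathrm{id}^{-1}=\mathrm{id}$ and $\mathrm{id} \circ \mathrm{id}=\mathrm{id}$. For (a), given $h \in H_{v,w}$ with $\iota(h)=\bar h$, we have $\gamma(h)=\gamma(\bar h)=e$ for some edge $e$, so in the identified copy $C$ we find $p_h = p_e = p_{\bar h}$, giving $\phi_{v,w}(p_h)=p_{\bar h}$. There is no real obstacle here: the only subtlety is bookkeeping, and both hypotheses are used exactly once, weightlessness to guarantee that components of curves in $\MaG$ are $\PP^1$ (matching our choice of $C$), and looplessness to guarantee that one point per edge yields distinct marked points on each $C_v$.
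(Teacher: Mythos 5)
Your proof is correct, but it takes a genuinely different route from the paper's. The paper argues by induction on $|V(G)|$: it removes a vertex $u$ with $G-u$ connected, takes a special curve $X'$ dual to $G-u$, attaches a new copy of $\PP^1$ as $C_u$, defines the new isomorphisms by composing a single chosen $\phi_{v,u}$ with the old ones, and must then choose the new marked points carefully so that on each component the full set $P_w$ stays disjoint and the images $\phi_{w,u}(P_{w,u})$ on $C_u$ do not collide. Your construction avoids all of that bookkeeping: by taking every $C_v$ to be the \emph{same} $\PP^1$ with one fixed point $p_e$ per edge and setting $p_h:=p_{\gamma(h)}$, every $\phi_{v,w}$ is the identity, so conditions (b) and (c) of Definition~\ref{specdef} are automatic and (a) reduces to $\gamma(h)=\gamma(\ov{h})$. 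You correctly use looplessness (so $\gamma|_{H_v}$ is injective and the points of $P_v$ are distinct) and weightlessness (so rational components are the right ones), and the resulting nodal curve visibly has dual graph $G$. What your approach buys is a shorter, induction-free proof with the compatibility conditions holding for trivial reasons; what the paper's approach buys is essentially nothing extra here, though its one-component-at-a-time attachment is in the same spirit as the domination/partial-normalization discussion that follows the definition of special curves.
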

\begin{proof}
The proof is by induction on the number of vertices of $G$; if  $ |V(G)|=1$  there is nothing to prove.

Suppose $|V(G)|\geq 2$. Let $u\in V(G)$ and let $G'=G-u$ be the graph obtained by removing $u$ and all the edges adjacent to it;
we  choose $u$ so that $G'$ is connected (it is well known that such a vertex $u$ exists for any connected graph $G$). Let $C_u$ be a copy of $\PP^1$.
By induction there exists a special curve $X'$ having $G'$ as dual graph;
for every $w,v\in V(G')$ let $\phi'_{w,v}:C_w\to C_v $ be the isomorphisms associated to $X'$.

We now pick $v\in V(G')$ and
  fix an  isomorphism $\phi_{v,u}:C_v\to C_u$.
Now, for any other  vertex $w\in V(G')$ we set
$$\phi_{w,u}:=\phi_{v,u}\circ \phi_{w,v}';$$
  we also set $\phi_{w,v}= \phi'_{w,v}$.
 If
  $H_{u,w}(G)$ is not empty we
pick a set  of distinct points $P_{w,u}\subset C_w\subset X'$ labeled by $H_{w,u}(G)$, such that $P_{w,u}$ does not intersect any $P_{w,w'}$ with $w'\neq u$, 
and such that   $\phi_{w,u}(P_{w,u})$ does not intersect any $ \phi_{w',u}(P_{w',u})$ with $w'\neq w$;
we
  set $P_{u,w}:=\phi_{w,u}(P_{w,u})$.
  Now  let  $X$ be obtained by gluing $C_u$ to $X'$ by identifying $p\in P_{u,w}$ with
  $\phi_{u,w}(p)$ for every $p\in P_{u,w}$ and every $w\in V(G')$.
  It is clear that  $X$ is a  special curve.
  \end{proof}

\subsection{Binary curves} A {\it binary curve} is a curve whose dual graph is binary, as defined in    Example~\ref{spex}.
For such a curve we write $V(G)=\{v_1,v_2\}$ and $X=C_1\cup C_2$, so that $C_i=C_{v_i}$ and $C_i\cong \PP^1$;
recall that $v_1\cdot v_2=g+1$ where $g$ is the genus of $X$.

Let us show that for binary curves the answer to Problem~\ref{toy2} is ``yes".

\begin{prop}
\label{binary}
Let $G$ be
a binary graph   of genus $g$.
Then $r_G(\delta)=\rgdel$ for every $\delta \in \Pic(G)$.
\end{prop}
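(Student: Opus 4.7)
By Theorem~\ref{rankthm} we already have $\ralg(G,\delta)\leq r_G(\delta)$, so the content of the proposition is the reverse inequality. The plan is to fix a single curve $X\in\MaG$ and show $r(X,\delta)\geq r_G(\delta)$, which by Definition~\ref{ardef} yields $\ralg(G,\delta)\geq r_G(\delta)$. The natural choice is a special binary curve in the sense of Definition~\ref{specdef}, whose existence is guaranteed by Lemma~\ref{existspecial}; writing $X=C_1\cup C_2$ with $C_1,C_2\cong\PP^1$ meeting at $g+1$ nodes, the special structure provides an isomorphism $\phi\colon C_1\to C_2$ carrying the nodes of $C_1$ to those of $C_2$, and in particular a natural degree-two cover $\sigma\colon X\to\PP^1$ whose pullbacks supply line bundles with many sections.

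Since $r(X,\delta)=\min_{\md\in\delta}\rmax(X,\md)$ and $r_G(\md)$ is constant on $\delta$, it suffices to prove the pointwise inequality $\rmax(X,\md)\geq r_G(\delta)$ for every representative $\md=(d_1,d_2)\in\delta$. The basic tool is the short exact sequence
\[
0\to H^0(X,L)\to H^0(C_1,L_{C_1})\oplus H^0(C_2,L_{C_2})\xrightarrow{\rho}\bigoplus_{j=1}^{g+1}\mathbb{C},
\]
where $\rho$ records the $g+1$ gluing conditions at the nodes. The point is that on a special curve one can choose the gluing scalars so that, under the identification $\phi$, the two evaluation images in $\mathbb{C}^{g+1}$ become nested subspaces, dropping the rank of $\rho$ and producing many more sections than the generic count $|\delta|-g+1$.

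The verification is a case analysis based on the $v_1$-reduced representative $(d_1',d_2')$ of $\delta$ (with $0\leq d_2'\leq g$, by Fact~\ref{redfct}) and on the signs of the coordinates of $\md$. In the effective range one computes $r_G(\delta)$ via Proposition~\ref{rank} (when $d_1'\leq g$, giving $r_G(\delta)=\min(d_1',d_2')$) or via Riemann-Roch combined with Lemma~\ref{E:reduced} (when $d_1'>g$, giving $r_G(\delta)=|\delta|-g$), and matches this against the section count produced by $\rho$ on the special curve. Representatives with a negative coordinate are handled directly: if $d_1<0$, every section of $L$ vanishes on $C_1$ and hence at all $g+1$ branches on $C_2$, so $\rmax(X,\md)=\max\{-1,d_2-g-1\}$, with the case $d_2<0$ handled symmetrically.

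I expect the main obstacle to be the unbalanced effective case $d_1'>g$: there the generic section count already matches $r_G(\delta)=|\delta|-g$ on the reduced form, but the estimate must also be verified for representatives far from the reduced form, where one coordinate turns negative. The crucial arithmetic point is that the shift index needed to force $d_1<0$ strictly exceeds $d_1'/(g+1)$ (except when $d_1'$ is an exact multiple of $g+1$, which requires a small separate check), and this is precisely what makes the resulting bound $d_2-g-1\geq|\delta|-g$ hold.
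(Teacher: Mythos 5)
Your proposal is correct and follows essentially the same route as the paper: reduce to a special binary curve, split into the ``both degrees at most $g$'' case (where the special structure forces the two evaluation images at the nodes to be nested, yielding $\min(d_1',d_2')+1$ sections) and the unbalanced case (where the crude count $h^0(L_1)+h^0(L_2)-(g+1)$ already gives $|\delta|-g$), and handle representatives with a negative coordinate by noting every section vanishes on that component. The only differences are organizational --- you index the cases by the $v_1$-reduced representative and use the Mayer--Vietoris sequence where the paper uses the balanced representative together with monotonicity and the normalization --- and your flagged ``main obstacle'' dissolves immediately, since $d_1\leq -1$ is literally equivalent to $d_2-g-1\geq|\delta|-g$.
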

\begin{proof}
By Theorem~\ref{rankthm} it suffices to prove $\rgdel\geq r_G(\delta)$.
In other words, it suffices to prove that there exists $X\in \MaG$ such that
for every $\md \in \delta$ there exists $L\in \Pic^{\md}(X)$ for which
$r(X,L)\geq r_G(\delta)$.

We can assume $r_G(\delta)\geq 0$ and $0\leq |\delta|\leq 2g-2$, see \cite[Theorem  2.9 and Lemma 2.4]{Cjoe}.

 Since $r_G(\delta)\geq 0$ we can choose $\md=(a,b)\in \delta$ such that $0\leq a\leq b$.
There are two cases.

$\bullet$ Case 1: $b\leq g$.

Then  $r_G(\md)=a$, by Proposition~\ref{rank}.
Let $X=C_1\cup C_2$ be a   special binary curve. Then there clearly exists $L\in \Pic^{(a,a)}(X)$ such that
$r(X,L)=a$, hence $\rmax(X, (a,a))\geq a$; Remark~\ref{monotone} yields
$\RX\geq \rmax(X, (a,a))\geq a$.

Now let $\md' \in \delta$ be a different representative, so that
$\md'=(a-n(g+1),b+n(g+1))$ for $n\in \Z$ with $n\neq 0$.
Then for any $L'\in \Pic^{\md'}(X)$,
using the simple estimate
$$
r(X,L')\geq h^0(C_1,L'_{|C_1})+ h^0(C_2,L'_{|C_2})-1-(g+1)
$$
 and recalling that $a\leq b\leq g$,
 we easily get
$$
r(X,L')\geq a+|n|(g+1)-(g+1)\geq a.
$$
So we are done.

$\bullet$ Case 2: $b\geq g+1$.
We claim that
\begin{equation}
\label{RRab}
r_G(\md)=a+b-g.
\end{equation}
In fact from  Proposition~\ref{rank} and the fact that  $b\geq g+1$ we obtain
$$
r_G(K_G-(a,b))=r_G(g-1-a,g-1-b)=-1.
$$
Therefore \eqref{RRab} follows from
  Riemann-Roch.

   On the other hand pick any $X\in \MaG$ and any representative
   $\md'=(d_1,d_2)\in \delta$, so that  $d_1+d_2=a+b$.
Let $L\in \Pic^{\md'}(X)$.

Denote by $X^{\nu}$ the normalization of $X$ and by $L^{\nu}$ the pull back of $L$ to it.
If $d_i\geq 0$ for $i=1,2$ we have
$$
r(X,L)\geq r(X^{\nu},L^{\nu}) -(g+1)=d_1+d_2+1-g-1=a+b-g=r_G(\md).
$$
If $d_1\leq -1$ then $d_2\geq 1+a+b$ and we have
$$
r(X,L)\geq r(X^{\nu},L^{\nu}) -(g+1)=d_2-g-1\geq a+b-g=r_G(\md),
$$
so we are done.
\end{proof}

\begin{remark}
The proof gives a slightly stronger statement. Indeed   in case 1, we proved that for every special curve $X\in \MaG$ we have $\rdel= r_G(\delta)$.
In case 2
we have
$r(X,L)\geq r_G(\delta)$
 for every $X \in \MaG$, every $\md\in \delta$ and every $L\in \Pic^{\md}(X)$.
\end{remark}

\subsection{The rank-explicit case}
For  a weightless loopless graph $G$, in Proposition~\ref{rank}   we proved  that if a divisor $\md$ is reduced with respect to a vertex $u$ of minimal degree,
then $r_G(\md)=\md(u)$, unless $\md(u)<-1$ in which case $r_G(\md)=-1$. We think of such a divisor  as
 ``explicitly exhibiting"  its rank (equal to its minimal entry); this motivates the  terminology below.
\begin{defi}
\label{rex} 
Let $G$ be a weightless  loopless
graph.
We say that a divisor $\md\in \Div(G)$ is {\emph {rank-explicit}}     if
$\md$ is  $u$-reduced for some   vertex  $u$  such that $\md(u)=\agd$.

More generally, we say that a divisor $\md$ on any graph   $G$   is {\emph {rank-explicit}}     if
$\md$ is  $u$-reduced for some   vertex  $u$  such that $\mdr(u)=\agd$.

We say that a divisor class $\delta\in \Pic(G)$ is {\emph {rank-explicit}} if it admits a rank-explicit representative.
\end{defi}

\begin{remark}
\label{rexrank}
 If $\md$ is rank-explicit, then $r_G(\md)=\agd$, by Prop.~\ref{wrank}.
\end{remark}

\begin{remark}
\label{exrex}
 By Lemma~\ref{E:reduced}, if $r_G(\delta)\leq 0$ then $\delta$ is rank-explicit.
\end{remark}
 \begin{example}
\label{notrex}
Not all divisor classes are rank-explicit. For example, on a binary graph of genus 1 the divisor class $\delta=[(0,2)]$
 has rank 1 and is not rank-explicit.
 \end{example}

By Lemma~\ref{P:sections}, if $\md$ is a $u$-reduced divisor on a graph $G$, then for every $X\in \MaG$ and every $L\in \Pic^{\md}(X)$,
  the restriction map
 $H^0(X,L)\to H^0(C_u,L_{C_u})$   is  injective  (where $C_u\subset X$ is the component corresponding to $u$).
 The following lemma tells us under which conditions the restriction map is an isomorphism, using
 rank-explicit divisors and special curves.

 \begin{lemma} \label{explicitLemma} Let $\md$ be a   rank-explicit divisor on a weightless loopless graph $G$.
Then, for every special curve $X\in\MaG$, there exists a line bundle $L\in\Pic^{\md}(X)$, such that $H^0(X,L) \cong H^0(\PP^1,\O(\agd))$.
 \end{lemma}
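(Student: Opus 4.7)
The plan is to construct an explicit $L \in \Pic^{\md}(X)$ for which the restriction map $H^0(X, L) \to H^0(C_u, L_{C_u})$ is an isomorphism. Setting $\ell := \agd = \md(u)$, since $C_u \cong \PP^1$ and $\deg L_{C_u} = \ell$, this will directly yield $H^0(X, L) \cong H^0(\PP^1, \O(\ell))$. Injectivity of the restriction is free: Lemma~\ref{P:sections} applied with $\me = \mo$ (so $\meg = \mo$), combined with the $u$-reducedness of $\md$, forces any section of any $L \in \Pic^{\md}(X)$ vanishing on $C_u$ to be zero. Hence the entire work consists in producing a specific $L$ for which every section on $C_u$ extends to $X$; the degenerate case $\ell = -1$ then needs nothing further since $H^0(\PP^1, \O(-1)) = 0$.

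Assume $\ell \geq 0$. The construction enters speciality through the following observation: since $\phi_{v,u} = \phi_{w,u} \circ \phi_{v,w}$ and $\phi_{v,w}(p_h) = p_{\ov{h}}$ for every $h \in H_{v,w}$, the two endpoints of each edge $e = \{h,\ov{h}\}$ map to a single common point $q_e := \phi_{v,u}(p_h) = \phi_{w,u}(p_{\ov{h}})$ of $C_u$ under the identifications of the components with $C_u$. I would fix a point $\infty \in C_u$ distinct from every $q_e$ (possible since $k$ is infinite), set $\infty_v := \phi_{u,v}(\infty) \in C_v$, and define
\[
L := \O_X(D), \qquad D := \sum_{v \in V} \md(v) \cdot \infty_v,
\]
a line bundle on $X$ of multidegree $\md$, since $D$ is a Cartier divisor supported on smooth points of $X$.

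Next I would identify sections of $L$ with tuples $(f_v)_{v \in V}$, where $f_v$ is a rational function on $C_v \cong \PP^1$ with at most an order-$\md(v)$ pole at $\infty_v$ and regular elsewhere, subject to the matching conditions $f_v(p_h) = f_w(p_{\ov{h}})$ at every node. Transporting via $\phi_{v,u}$ to a common affine coordinate on $C_u$, each $f_v$ becomes a polynomial of degree at most $\md(v)$, and the matching condition becomes $f_v(q_e) = f_w(q_e)$ for every edge $e$. Given any section $f_u$ of $L_{C_u}$ — a polynomial of degree at most $\ell$ — I would simply set $f_v := f_u$ for every $v$: the rank-explicit hypothesis $\md(u) = \agd$ combined with $u$-reducedness gives $\md(v) \geq \ell$ for all $v$, so each $f_v$ has admissible degree, and the matching conditions hold trivially. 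This produces the required preimage in $H^0(X, L)$ and concludes the argument.

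The main point to highlight is that speciality is precisely what makes this uniform choice possible: without the identifications collapsing $\phi_{v,u}(p_h)$ and $\phi_{w,u}(p_{\ov{h}})$ to a common $q_e \in C_u$, the two endpoints of an edge would impose independent linear constraints on the pair $(f_v, f_w)$, which polynomials of degree $\ell$ (potentially strictly less than $\md(v)$) could not in general satisfy simultaneously. Speciality collapses these to a single equality at $q_e$, which is automatic once all components carry the same polynomial.
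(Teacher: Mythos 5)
Your proof is correct, and it reaches the conclusion by a more direct route than the paper's. Both arguments exploit speciality in the same essential way: the compatible isomorphisms $\phi_{v,w}$ identify every component with $C_u\cong\PP^1$ in such a way that the two branches of each node are identified with a single point of $C_u$, so ``diagonal'' sections glue. The paper, however, first reduces to $\md=(\ell,\dots,\ell)$, chooses $L$ with all gluing constants equal to $1$, and then runs a descending induction over the filtration of subcurves coming from the Dhar decomposition of the $u$-reduced divisor, using a Mayer--Vietoris sequence at each step; the inequality $|C_m\cap Z_{m-1}|>\deg_{C_m}L$ supplied by the Dhar decomposition gives injectivity, and the compatibility $\chi_{v,u}=\chi_{w,u}\circ\chi_{v,w}$ gives surjectivity of each step. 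You instead pin down the line bundle explicitly as $\O_X(D)$ with $D=\sum_v\md(v)\,\phi_{u,v}(\infty)$ (your choice of $\infty$ away from the finitely many points $q_e$ correctly keeps $D$ supported on smooth points, so the multidegree and the node-matching description of sections are both as you state), describe $H^0(X,L)$ concretely as tuples of polynomials of degrees $\md(v)\geq\ell$ agreeing at the $q_e$, and observe that the constant tuple $f_v=f_u$ satisfies all constraints; injectivity you correctly outsource to Lemma~\ref{P:sections} with $\me=\mo$, which only needs the $u$-reducedness hypothesis. Your version buys a shorter, induction-free argument and needs no reduction to constant multidegree (only $\md(v)\geq\ell$ for all $v$, which follows from $\ell=\agd\geq 0$); the paper's inductive scheme is what generalizes to the bookkeeping needed elsewhere (it parallels the proof of Lemma~\ref{P:sections} itself), but for this particular statement your argument is complete as written.
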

 \begin{proof}

Set $\ell=\agd$; let $X\in\MaG$ be a special curve.
By Remark \ref{monotone}, we may assume that $\md=(\ell,\ell,\ldots, \ell)$. If $\ell=-1$ there is nothing to prove, so we will  assume $\ell\geq 0$.

We write $X=\cup C_i$, and fix isomorphisms of pointed curves
$\phi_{i,j}:(C_i,P_{i,j})\to (C_j,P_{j,i})$ as in Definition~\ref{specdef}.
Recall that if $x\in X$ is a node whose branches are $x^i\in C_i$ and $x^j\in C_j$,
then $\phi_{i,j}(x^i)=x^j$.

For $L\in \Pic ^{\md}(X)$,  we write $L_i:=L_{C_i}$  for every $i$, and fix an isomorphism  $L_i\cong \O_{\PP^1}(\ell)$;  now, each $\phi_{i,j}$ induces an isomorphism
\begin{equation}
\label{chi}
\chi_{i,j}:H^0(C_i,L_i)\la H^0(C_j,L_j);\quad \quad s\mapsto s\circ \phi_{j,i}.
\end{equation}
By hypothesis, $\md$ is $u$-reduced for some $u\in V$; consider the corresponding    Dhar decomposition  $$
V=Y_{0}\sqcup Y_1\ldots\sqcup Y_{l},
$$
(see subsection~\ref{S:Dhar}).  We can fix, with no loss of generality, an ordering $V=\{v_0=u, v_1, \ldots, v_t  \} $
 compatible with the decomposition  (i.e.   if $v_i\in Y_h$ and $v_{i'}\in Y_{h'}$ with $h<h'$, then $i<i'$),
 and such that   it   induces a filtered sequence of connected subcurves
 $$
C_0=Z_0\subset   \ldots \subset Z_m\subset \ldots \subset Z_t=X
$$
with $Z_m= \cup_{i=0}^mC_{i}$ and $C_i=C_{v_i}$.

We pick $L\in \Pic^{\md}(X)$
such that the gluing constants over the nodes are all equal to $1$;
we will prove that for every $m= 0, \ldots, t$,  there is an isomorphism
\begin{equation}
\label{Hm1}
H^0(Z_m, L_{Z_m})\cong H^0( C_0,L_{C_0}),
\end{equation}
which, as $H^0(C_0,L_{C_0}) \cong H^0(\PP^1,\O(\ell))$, implies the Lemma.

The proof of \eqref{Hm1} will go by induction on $m$. The case
 $m=0$ is obvious.
Assume the statement holds for $ m-1$. Write
$
Z_{m}=Z_{m-1}\cup C_{m}
$; for every node $x_{\alpha}\in  Z_{m-1}\cap C_{m}$,
we denote its branches by $x_{\alpha}^m\in C_m$ and $x_{\alpha}^{i(\alpha)}\in C_{i(\alpha)}\subset Z_{m-1}$.
Consider the exact sequence
$$
0\to H^0(Z_{m},L_{Z_{m}})\stackrel{\rho}{\to} H^0(Z_{m-1},L_{Z_{m-1}})\oplus
H^0(C_{m},L_{m})
 \stackrel{\pi}{\la} \bigoplus_{\alpha}k(x_{\alpha}),
$$
where $\rho$ is the  restriction map
(i.e. $\rho(s)=s_{|Z_{m-1}}\oplus s_{|C_{m}}$), $k(x_{\alpha})$ is the skyscraper sheaf
supported on $x_{\alpha}$ (equal to $k$ on $x_{\alpha}$),   the point $x_{\alpha}$ ranges in $Z_{m-1}\cap C_{m}$,
 and
$$
\pi(s\oplus t):=\oplus_{x_{\alpha}\in C_m\cap Z_{m-1}}(s(x_{\alpha}^{i(\alpha)})-t(x_{\alpha}^m)).
$$

By our ordering of the vertices of $G$, and
since $\md$ is $u$-reduced,
we have $|C_m\cap Z_{m-1}|>\deg_{C_m}L$, so  the restriction of $\pi$ to $ H^0(C_{m},L_{m})$ is injective (a section of $L_{m}$ cannot have more zeroes than its degree).

We now claim that $\pi$ induces an isomorphism between $H^0(C_{m},L_{m})$ and
$ \im \pi. $
It suffices to prove that for any $s\in H^0(Z_{m-1},L_{Z_{m-1}})$, we have $\pi(s)\in \pi (H^0(C_{m},L_{m}))$. 
Recall   that $x_{\alpha}^{i(\alpha)}=\phi_{m, i(\alpha)}(x_{\alpha}^m)$, hence

$$
\pi(s)=\sum _{\alpha}s(x_{\alpha}^{i(\alpha)})=\sum _{\alpha}s(\phi_{m, i(\alpha)}(x_{\alpha}^m))=
\sum _{\alpha}\chi_{ i(\alpha),m}(s_{|C_{ i(\alpha)}})(x_{\alpha}^m)
$$
by \eqref{chi}.
Since $\chi_{i(\alpha),m}(s_{|C_{ i(\alpha)}})\in H^0(C_{m},L_{m})$
the proof of the claim is complete.
 Therefore
 $\im \pi \cong H^0(C_{m},L_{m})$,
  hence
  $$H^0(Z_m,L_{Z_m})\cong H^0(Z_{m-1},L_{Z_{m-1}})\cong H^0(C_u,L_u)$$ by induction;
 \eqref{Hm1} is proved, and the Lemma with it.
 \end{proof}

\begin{thm}
\label{rexthm}
 Let $G$ be    weightless and  loopless;   let $\delta\in \Pic (G)$ be rank-explicit.
 Then $\rgdel=r_G(\delta)$.
\end{thm}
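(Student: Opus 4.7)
The plan is to combine Theorem~\ref{rankthm}, which already gives $\ralg(G,\delta) \le r_G(\delta)$, with a constructive lower bound on a carefully chosen curve. Since $\delta$ is rank-explicit, Remark~\ref{rexrank} identifies $r_G(\delta) = \ell_G(\md) =: \ell$ for any rank-explicit representative $\md$, so the task reduces to exhibiting an $X \in \MaG$ for which $r(X,\delta) \ge \ell$.

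First I would invoke Lemma~\ref{existspecial} to pick a special curve $X \in \MaG$, and then apply Lemma~\ref{explicitLemma} to the rank-explicit $\md$ to produce a line bundle $L \in \Pic^{\md}(X)$ with $r(X,L) = \ell$. This shows $\rmax(X,\md) \ge \ell$. The hard part comes next: since $r(X,\delta)$ is the \emph{minimum} of $\rmax(X,\md')$ over all representatives $\md' \in \delta$, one must upgrade the bound to $\rmax(X,\md') \ge \ell$ for every such $\md'$.

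The key idea is a twisting argument. Given $\md' = \md + \mt$ with $\mt \in \Prin(G)$, first decompose $\mt$ as a non-negative integer combination $\mt = \sum_{i=1}^k \mt_{Z_i}$; this is possible because $\mt_Z + \mt_{Z^c} = \mt_V = 0$, so any generator appearing with negative coefficient may be rewritten via $-\mt_Z = \mt_{Z^c}$. Setting $C_Z := \sum_{v\in Z} C_v$ (a sum of components of $X$), a direct computation yields $\mdeg \O_X(C_Z) = \mt_Z$, so the effective Cartier divisor $D := \sum_i C_{Z_i}$ satisfies $\mdeg \O_X(D) = \mt$, and $L' := L \otimes \O_X(D)$ lies in $\Pic^{\md'}(X)$. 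Because $D$ is effective, the tautological inclusion $\O_X \hookrightarrow \O_X(D)$ induces an inclusion $L \hookrightarrow L'$ of sheaves, hence $h^0(X,L') \ge h^0(X,L) = \ell+1$ and so $r(X,L') \ge \ell$. This gives $\rmax(X,\md') \ge \ell$ for every $\md' \in \delta$, hence $r(X,\delta) \ge \ell$, which finishes the proof.

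The main obstacle to anticipate is precisely the passage from a single multidegree to the entire equivalence class: Lemma~\ref{explicitLemma} controls only line bundles of multidegree $\md$, whereas $r(X,\delta)$ demands a uniform rank bound across the whole $\sim$-orbit. The twist $L \mapsto L(D)$ circumvents this; its only nontrivial arithmetic input is the non-negative decomposition of elements of $\Prin(G)$ into the $\mt_Z$'s, which follows at once from the vanishing of $\mt_V$.
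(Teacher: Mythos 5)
Your first half follows the paper: reduce to showing $\rmax(X,\md)\geq\ell$ on a special curve $X$ (Lemma~\ref{existspecial}) via Lemma~\ref{explicitLemma} applied to a rank-explicit representative. The gap is in the twisting step meant to propagate the bound to every representative $\md'=\md+\mt$. Your $D=\sum_i C_{Z_i}$ is a sum of \emph{irreducible components} of $X$, not of points: it is not a Cartier divisor on the curve $X$ itself, so ``$\O_X(D)$'' only makes sense after embedding $X$ in a smooth surface $\X$ (as in the specialization section) and restricting $\O_{\X}(D)$ back to $X$. That restriction does have multidegree $\mt$, but the canonical section of $\O_{\X}(D)$ vanishes identically along the subcurve of $X$ supported on $\bigcup_i Z_i$, so multiplication by it is \emph{not} an injection $\O_X\hookrightarrow\O_{\X}(D)|_X$, and $h^0$ can genuinely drop under such a twist. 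Indeed, if your inequality $h^0(X,L')\geq h^0(X,L)$ held for every principal twist, then by symmetry (replacing $\mt$ by $-\mt$, which also admits a non-negative decomposition into the $\mt_Z$) all the values $\rmax(X,\md')$ for $\md'\in\delta$ would coincide; but the paper notes that $\max\{\rmax(X,\md'):\ \md'\in\delta\}=+\infty$ for every reducible curve while $r(X,\delta)$ is finite, so this cannot happen.

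The paper's actual argument, Proposition~\ref{RRthm}\eqref{RR2}, circumvents this as follows: by Remark~\ref{tZrk} there is $Z\subsetneq V$ with $\mt_{|Z}\leq(\mt_Z)_{|Z}$; Lemma~\ref{explicitLemma} is applied to the subcurve $Z$ (again special) to produce $L_Z$ of multidegree $\md_{|Z}$ with $r(Z,L_Z)\geq\ell$; one then chooses $M\in\Pic^{\mc}(X)$ whose restriction to $Z$ equals $L_Z(Z\cdot Z^c+E)$ for some effective $E$ on $Z$ --- possible precisely because $\mdeg_Z M(-Z\cdot Z^c)\geq\md_{|Z}$ --- and finally sections of $M_Z(-Z\cdot Z^c)$ are extended by zero across $Z^c$. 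You would need to replace your global effective twist with an argument of this kind (restriction to a well-chosen subcurve plus extension by zero), since no sheaf inclusion $L\hookrightarrow L'$ is available.
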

Set $r=r_G(\delta)$;
by Theorem~\ref{rankthm} we can assume $r\geq 0$.
Let $\md$ be a rank-explicit representative for $\delta$;
then $r_G(\delta) = \agd$ by Remark~\ref{rexrank}. Therefore  Theorem \ref{rexthm}
 is a special case of the following more precise result.

\begin{prop}
\label{RRthm}
Let $G$ be  weightless and loopless; pick
  $\md\in \Div(G)$
and set
$\delta=[\md]$. Let  $X\in \MaG$ be a special curve. Then
\begin{enumerate}[(a)]
\item
\label{RR1}
$\rmax(X,\md)\geq \agd$.
\item
\label{RR2}
$r(X,\delta)\geq\agd$.
\item
\label{RR3}
If $r_G(\md)=\agd$ then $\rgdel=r_G(\delta)$.
\end{enumerate}
\end{prop}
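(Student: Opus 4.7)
The plan is to prove the three parts in order, with part (c) being an easy consequence of (b) together with Theorem~\ref{rankthm}, and with part (b) being the main technical obstacle. Throughout, set $\ell:=\ell_G(\md)$. If $\ell=-1$ each claim is trivial, so assume $\ell\geq 0$, i.e.\ $\md\geq \ell\cdot\mo$ where I write $\mo$ for the divisor with value $1$ on every vertex.

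\textbf{Part (a).} By Remark~\ref{monotone} and the inequality $\md\geq \ell\cdot \mo$, it suffices to prove $\rmax(X,\ell\cdot\mo)\geq \ell$. Fix any vertex $u\in V$ and choose a smooth point $p_u\in C_u$ generically, so that for every $v\in V$ the point $p_v:=\phi_{u,v}(p_u)$ lies in the smooth locus of $X$ (only finitely many points are excluded). Let $D:=\ell\sum_{v\in V} p_v$ and $L:=\O_X(D)$, which has multidegree $\ell\cdot\mo$. I would now exhibit $\ell+1$ linearly independent sections of $L$ by pulling back functions on $C_u\cong\PP^1$: define
\[
\Psi:H^0\bigl(C_u,\O_{C_u}(\ell p_u)\bigr)\la H^0(X,L),\qquad f\mapsto (f\circ \phi_{v,u})_{v\in V}.
\]
Each component $f\circ\phi_{v,u}$ is a rational function with pole of order $\leq \ell$ at $p_v$, hence a section of $L|_{C_v}$. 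At a node $x\in C_v\cap C_w$ with branches $x_v=p_h$, $x_w=p_{\bar h}$ for some $h\in H_{v,w}$, properties (a) and (c) of Definition~\ref{specdef} give $\phi_{v,u}(x_v)=\phi_{w,u}(\phi_{v,w}(x_v))=\phi_{w,u}(x_w)$, so the pulled-back functions agree at $x$. Thus $\Psi$ is well-defined and injective, giving $h^0(X,L)\geq \ell+1$ and $\rmax(X,\ell\cdot\mo)\geq \ell$.

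\textbf{Part (b).} This is the main step, since now we must show $\rmax(X,\md')\geq \ell$ for \emph{every} representative $\md'\in \delta$, including those with $\ell_G(\md')<\ell$ (in particular, non-effective $\md'$). The strategy is to construct, for each $\md'\in \delta$, a line bundle $L'\in \Pic^{\md'}(X)$ with $r(X,L')\geq \ell$ by imitating the construction of (a) but allowing the pole orders to vary and compensating with forced vanishing at appropriate nodes. Concretely, I would define $L'$ so that $L'|_{C_v}=\O_{C_v}(\md'(v)\cdot p_v)$, glued along the nodes using the isomorphisms $\phi_{v,w}$, and then exhibit sections via the same compatibility: a function on $C_u$ with pole $\leq\ell$ at $p_u$ pulls back on each $C_v$ to something that, once corrected by prescribed vanishings at the $|\md'(v)|$ shortfall nodes on components where $\md'(v)<\ell$, still lies in $H^0(C_v,L'|_{C_v})$. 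The section count is preserved because $\sum_v\md'(v)=\sum_v\md(v)$ forces the ``excess'' on components with $\md'(v)>\ell$ to absorb the ``deficit'' elsewhere. An alternative (backup) route would be to pick a regular one-parameter smoothing $\ph:\mathcal X\to B$ of $X$, extend $L$ from (a) to a line bundle $\mathcal L$ on $\mathcal X$ whose generic fiber has $h^0\geq \ell+1$, and then realise each $\md'\in\delta$ as the special-fiber multidegree of a suitable twist $\mathcal L\otimes \O_{\mathcal X}(\sum a_Z X_Z)$; upper-semicontinuity of $h^0$ along $\ph$ then forces the special-fiber rank to be at least $\ell$ for that twist.

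\textbf{Part (c).} Assuming $r_G(\md)=\ell_G(\md)=\ell$, part (b) gives $r(X,\delta)\geq\ell=r_G(\md)=r_G(\delta)$; taking the maximum over special $X\in \MaG$ yields $\ralg(G,\delta)\geq r_G(\delta)$, and the reverse inequality is Theorem~\ref{rankthm}, so $\ralg(G,\delta)=r_G(\delta)$.

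The main obstacle is part (b), in particular the case $\ell_G(\md')<\ell$: direct application of (a) to $\md'$ is insufficient, and one needs a genuinely global construction that exploits the compatible isomorphisms of the special structure to keep $(\ell+1)$ sections alive after forced vanishings at nodes. The subtle point is verifying that the pullback construction (or, equivalently, the smoothing/twist approach) continues to produce $\ell+1$ linearly independent sections even when some components $C_v$ with $\md'(v)$ very negative contribute only the zero section.
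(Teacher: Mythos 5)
Your part (a) is correct, and it is actually more direct than the paper's route: the paper obtains (a) from Lemma~\ref{explicitLemma} (an induction along the Dhar decomposition showing $H^0(X,L)\cong H^0(\PP^1,\O(\ell))$ for rank-explicit divisors), combined with a $G$-saturation and a birational map $X\to X'$ to handle non-rank-explicit $\md$; your pullback map $f\mapsto(f\circ\phi_{v,u})_{v\in V}$ produces the $\ell+1$ sections in one stroke. Part (c) is fine and identical to the paper's.

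The genuine gap is part (b), which is the heart of the proposition, and neither of your proposed routes works as stated. In the first route, if $\md'(v)<\ell$ on some component then $f\circ\phi_{v,u}$ has a pole of order up to $\ell$ at $p_v$ and is simply not a section of $\O_{C_v}(\md'(v)\,p_v)$; ``correcting by prescribed vanishings at shortfall nodes'' is not an operation you can perform on an already-determined function without multiplying it by something, and any such multiplication destroys the compatibility at the remaining nodes. The global identity $\sum_v\md'(v)=\sum_v\md(v)$ does not repair this, since the excess and the deficit sit on different components. The backup route misapplies semicontinuity: along a smoothing $\phi$ one has $h^0(X,\L(b_0))\geq h^0(X_b,\L(b))$, so information flows from the generic fiber to the special fiber, not the other way; the assertion that an extension of your $L$ has $h^0\geq\ell+1$ on the generic fiber is precisely the smoothability Problem~\ref{pb2}, which the paper shows can fail. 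The missing idea is combinatorial and is the same one used in Corollary~\ref{cor0-1}: writing $\mc+\mt=\md$ with $\mt\in\Prin(G)$ nontrivial, Remark~\ref{tZrk} produces a nonempty $Z\subsetneq V$ with $\mt_{|Z}\leq(\mt_Z)_{|Z}$, whence $\mc_{|Z}-\mdeg_Z(Z\cdot Z^c)=\mc_{|Z}+(\mt_Z)_{|Z}\geq\md_{|Z}$. One then applies part (a) to the special subcurve $Z$ (noting $\min_{v\in Z}\md(v)\geq\ell$) to get $L_Z$ with $r(Z,L_Z)\geq\ell$, chooses $M\in\Pic^{\mc}(X)$ with $M_{|Z}=L_Z(Z\cdot Z^c+E)$ for some effective $E$, and extends sections of $M_{|Z}(-Z\cdot Z^c)$ by zero across $Z^c$. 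Without locating such a subcurve $Z$ on which the new multidegree dominates the old one after the twist, there is no mechanism for keeping $\ell+1$ sections alive for an arbitrary representative of $\delta$.
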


\begin{proof}
Set $\ell=\agd$.
Let $X\in\MaG$ be a special curve. If $\md$ is rank explicit, then part \eqref{RR1} is an immediate consequence of Lemma \ref{explicitLemma}.

To treat the general case, fix a vertex $u$ such that $\md(u)=\ell$, and suppose $\md$ is not reduced with respect to $u$. We choose a $G$-saturation, $G'$, with respect to $\md$ and $u$ (see Definition~\ref{Dhardef});
notice that $\md$ is rank-explicit on $G'$.
Now,
 we  construct a   special curve, $X'$, having $G'$ as dual graph, by gluing together some   points in  $C_u$ to points on other components of $X$; hence $X$ dominates $X'$ by a birational map
$$
\pi:X\la X'.
$$
By Lemma \ref{explicitLemma}, there exists $L'\in \Pic ^{\md}(X')$ satisfying
$r(X',L')= \ell$.
Consider $L= \pi^*L'$; then $L\in  \Pic ^{\md}(X)$, and, of course,
$$
r(X,L)\geq r(X',L')\geq \ell,
$$
as  required.

For part \eqref{RR2} we must prove that for every $\mc\sim  \md$ with $\md\neq \mc$ there exists $M\in \Pic^{\mc}(X)$
such that $r(X,M)\geq \ell$.
We have
$ 
\mc+\mt=\md,
$ 
for some   $\mt\in \Prin(G)$. We argue as for Corollary~\ref{cor0-1}, with same notation (repeating some things for convenience).
 By Remark~\ref{tZrk}   we have
\begin{equation}
 \label{mtZ}
\mt_{|Z}\leq (\mt_{Z})_{|Z},
\end{equation}
with    $Z\subsetneq V(G)$.  We denote by $Z$ and $Z^c$ the subcurves of $X$   corresponding to the vertices in $Z$
and $Z^c$. By Lemma \ref{explicitLemma} applied to the special curve $Z$, 
or to a connected component of $Z$ if $Z$ is not connected, 
there exists a line bundle $L_Z\in \Pic^{\md_{|Z}}(Z)$ satisfying
\begin{equation}
 \label{mZZl}
 r(Z,L_Z)\geq \ell.
 \end{equation}
Let
  $Z\cdot Z^c\in \Div(Z)$ be the    divisor cut on $Z$ by $Z^c$;
 we have
\begin{equation}
 \label{mZZ}
  \mdeg_ZZ\cdot Z^c= (\mt_{Z^c})_{|Z}=-(\mt_{Z})_{|Z}.
\end{equation}
  Now,   for every  $M\in \Pic ^{\mc}(X)$  we have
\begin{equation}
 \label{MMcc}
r(X,M)\geq r(Z, M_Z(-Z\cdot Z^c)).
\end{equation}
Moreover, using \eqref{mZZ} and \eqref{mtZ}
$$
\mdeg _ZM(-Z\cdot Z^c)=\mc_{|Z}-\mdeg_ZZ\cdot Z^c= \mc_{|Z} + (\mt_{Z})_{|Z}\geq \mc_{|Z}+\mt_{|Z}=\md_{|Z}.
$$
We can therefore  pick $M\in \Pic ^{\mc}(X)$   such that its restriction to $Z$ satisfies
$M_Z=L_Z(Z\cdot Z^c+E)$ for some effective divisor  $E$  on $Z$. By \eqref{MMcc} and \eqref{mZZl} we have
$$
r(X,M)\geq r(Z,L_Z(Z\cdot Z^c+E-Z\cdot Z^c)) \geq r(Z,L_Z)\geq \ell,$$
and part~\eqref{RR2} is proved.
Part~\eqref{RR3} follows from \eqref{RR2} and Theorem~\ref{rankthm}.
\end{proof}

By the following Example~\ref{wbinex} we have that neither Theorem~\ref{rexthm}, nor Proposition~\ref{binary},
 extend to weighted graphs.

\begin{example}
\label{wbinex}
Let $G$ be a   graph with two vertices $v_1$ and $v_2$, whose weights are $\w(v_1)=1$, $\w(v_2)=2$,
and such that $v_1\cdot v_2>12$. Consider the divisor  $\md =(3,4)$.
\begin{center}
\unitlength .9mm 
\linethickness{0.4pt}
\ifx\plotpoint\undefined\newsavebox{\plotpoint}\fi  
\begin{picture}(64.781,38.625)(10,62)
\qbezier(19,85.5)(42,113.625)(65,85.25)
\qbezier(64,85.25)(41.375,100.25)(19.25,85.25)
\qbezier(19.25,85.25)(42.5,55.625)(64.75,85.5)
\qbezier(64.75,85.5)(42.375,67.25)(19.5,85)
\put(19.5,85){\circle*{2}}
\put(64.75,85.75){\circle*{2.062}}
\put(41,87.75){\makebox(0,0)[cc]{$\vdots$}}
\put(18.25,88.5){\makebox(0,0)[cc]{$v_1$}}
\put(15,80.5){\makebox(0,0)[cc]{$d_1=3$}}
\put(64.25,90){\makebox(0,0)[cc]{$v_2$}}
\put(70.75,82.25){\makebox(0,0)[cc]{$d_2=4$}}
\put(43.25,65.75){\makebox(0,0)[cc]{$v_1\cdot v_2>12$}}
\end{picture}
\end{center}

We claim that  every
curve $X\in \MaG$ satisfies $\rmax(X,\underline{d})<r_{G}(\underline{d})$.
Notice that $\mdr=(2,2)$ and $\md$ is reduced with respect to both vertices.
So by Proposition~\ref{wrank}
we have  $r_{G}(\underline{d})=2$, and $\md$ is rank-explicit.

Now let
$L$ be a line bundle on $X$ with $\underline{\deg} \ L =\md$, where we write $X=C_1\cup C_2$ and $L_i=L_{|C_i}$, with $C_i$ corresponding to $v_i$.
Let
us see that $r(X,L)< 2$. Since this is
independent of the choice of $X$ and $L$, the claim will follow.

Assume by contradiction that   $r(X,L)= 2$. Since
$\underline{\deg} \ L =(3,4)$, we have  that
$r(C_1,L_{1})=r(C_2,L_{2})=2$. Note that if a section of $L_{1}$ or $L_{2}$
can be extended to all of $X$, then the extension is unique, since the number $|C_1\cap C_2|$ is large. Hence, the map $\phi_{L}:X\to\mathbb{P}^{2}$ determined
by $L$ restricts to non-degenerate maps $\phi_{1}:C_{1}\to\mathbb{P}^{2}$ and $\phi_{2}:C_{2}\to\mathbb{P}^{2}$.

The image of $\phi_{1}$ is an irreducible curve of degree $3$, so it is either a cubic or a line with multiplicity
3; since $\phi_{1}$ is non-degenerate, the image is a cubic. Similarly,
the image of $\phi_{2}$ is a non-degenerate irreducible curve of degree $4$, so it is either a (singular) quartic, or a conic
of multiplicity $2$.

Hence, $\phi_L(X)$ consists of two distinct irreducible curves of degrees
$3,4$. By B\'ezout Theorem, they intersect in at most $12$ points,
which is a contradiction.
\end{example}

The next example shows that the hypothesis that $\delta$ be rank-explicit is really needed
in Theorem~\ref{rexthm}.

 \begin{example}
 \label{counterex1} Let $X=C_1\cup C_2 \cup C_3$ be a curve with $3$ rational components meeting as follow (see the picture below).
$$C_1\cdot C_3=3\mbox{ and }C_2\cdot C_3 >6.$$

\begin{center}
\unitlength .9mm 
\linethickness{0.4pt}
\ifx\plotpoint\undefined\newsavebox{\plotpoint}\fi  
\begin{picture}(111.5,35.75)(0,80)
\qbezier(41.5,111.5)(47.25,92.875)(53,105.75)
\qbezier(53,105.75)(56.5,115.5)(62,105.25)
\qbezier(62,105.25)(67.5,93.75)(72,105.25)
\qbezier(72,105.25)(76.375,116.625)(81.25,105.5)
\qbezier(81.25,105.5)(86.625,94.125)(91.5,106.25)
\qbezier(91.5,106.25)(95.5,117.625)(101.5,106.5)
\qbezier(101.5,106.5)(106.5,93.625)(111.5,106.25)
\qbezier(111.5,108.25)(105,119.125)(101.5,106.5)
\qbezier(101.5,106.5)(96.625,93.375)(91.25,105.75)
\qbezier(91.25,105.75)(85.125,117.5)(81.5,105.25)
\qbezier(81.5,105.25)(76.875,93.25)(71.75,105.25)
\qbezier(71.75,105.25)(66.25,115.625)(61.75,105.5)
\qbezier(61.75,105.5)(57.25,92.25)(52.75,105)
\qbezier(52.75,105.25)(46.75,115.625)(43.75,105.5)
\qbezier(43.75,104.75)(32.5,80.75)(1.25,106.75)
\qbezier(5.75,91)(12.625,124.75)(24,86.5)
\qbezier(24,86.5)(30.375,70.375)(32.25,106.75)
\put(4.25,87.5){\makebox(0,0)[cc]{$C_1$}}
\put(4,110){\makebox(0,0)[cc]{$C_3$}}
\put(40.75,115){\makebox(0,0)[cc]{$C_2$}}
\end{picture}
\end{center}
Let $G$ be the dual graph of $X$ and let $\md= (1,2,3)$ be the  divisor in $\Div (G)$ such that the degree of $\md$ on $C_i$ is $i$.
We claim that
$$
2=r_G(\md)>\ralg(G,\md).
$$
It is easy to see that $r_G(\md)=2$.
To prove the claim
we will prove that
for every $L\in \Pic^{\md}(X)$ we have $r(X,L)<2$.

Assume, by contradiction, that $r(X,L)=2$. Then $L$ defines a non-degenerate map $\phi :X\to \mathbb P^2$.
We will treat all possible cases, getting a contradiction in each of them.

Case 0: $\phi(C_3)$ is point. But then $\phi(X)$ is a point (for otherwise $\phi(C_1)$ or $\phi (C_2)$ would have a singular point of too high multiplicity), which is not possible as $\phi$ is  non-degenerate.

Case 1: $\phi(C_3)$ is a line. Then one sees easily that $\phi(C_2)=\phi(C_3)$. Hence $\phi(C_1)$ must be a different  line  (for   the map $\phi$ is non-degenerate). But then the restriction of $\phi$ to $C_1$ is an isomorphism, so $\phi$ maps the three points of $C_1\cap C_3$ in different points, which is impossible as $\phi(C_3)$ is a line other than $\phi(C_1)$.

Case 2: $\phi(C_3)$ is a conic. Hence $\phi$ maps $C_3$ isomorphically to its image and  $L$ has a base point $p\in C_3$.
We claim that $\phi(C_2)=\phi(C_3)$. Indeed,
 $\phi(C_2)$ cannot be a point (for it would be a singular  point of $\phi(C_3)$);  hence $\phi(C_2)$ is a curve of degree at most 2, which, if different from  $\phi(C_3)$, would meet  $\phi(C_3)$ in degree greater than 4, contradicting B\'ezout  Theorem.

 Let us now consider $C_1$; if the base point $p$ is one of the three points
where $C_3$ meets $C_1$, then $L$ has a base point  on $C_1$ and hence $\phi(C_1)$ is a point. This is impossible, since $C_1$ meets $C_3$ in two more points which, as we said, have distinct images via $\phi$. If $p\not\in C_1$, arguing as before we get that $\phi(C_1)$ cannot be a point, and hence it is a line, which intersects $\phi(C_3)$ in three points. By B\'ezout Theorem this is impossible.

Case 3: $\phi(C_3)$ is a cubic. Then $\phi$ maps $C_3$ birationally onto its image, and $\phi(C_2)$ cannot be a point (for it would be a   point of too high multiplicity on a cubic).
Hence $\phi(C_2)$ is a curve of degree at most 2, which meets $\phi(C_3)$ in degree greater than 6. This contradicts B\'ezout  Theorem.

\end{example}
\begin{remark}
By slightly modifying the example above, we have an example where   strict inequality between the algebraic and combinatorial rank holds  for a
{\it  simple graph} (a graph with at most one edge between any two  vertices).
Indeed, keeping the notation of the  example, consider the graph $G'$ obtained from $G$ by adding a vertex in the interior of each edge, and let
$\md'\in\text{Div}(G')$ be the divisor obtained by extending $\md$ by zero. Then by \cite[Theorem 1.4]{bakersp},
$ 
r_{G'}(\md') = r_{G}(\md) = 2. 
$ 

Let $X'$ be any curve  dual to $G'$, let $L'$ be a line bundle on $X'$ such that $\mdeg  L' = \md'$, and let $u\in V(G')\smallsetminus V(G)$. Since $\md'(u)=0$, the map $\phi':X'\to \PP^2$ corresponding to $L'$ maps the  component $C_u$ to a point.
Therefore  the number of intersection points of $\phi'(C_1), \phi'(C_2)$, and $\phi'(C_3)$ is the same as for $\phi$, and the exact argument as in the example above yields
$
\ralg(G',\md') < 2 = r_{G'}(\md').
$
\end{remark}

We know that if the combinatorial rank is $-1$ or $0$, then the combinatorial and algebraic rank are equal.
Notice that in all our  examples  where we do not have equality,   the combinatorial rank is  equal
to 2.
So the following case is  open:
\begin{question}
Suppose $r_G(\delta)=1$. Is $r_G(\delta)=\ralg(G,\delta)$?
\end{question}

Finally, we may consider the following
 variant of Problem~\ref{toy2}:
\begin{question}\label{Q:weakConj}
Let $G$ be a graph and $\delta\in \Pic(G)$. Does there exist a representative $\md_0\in\delta$ and $X\in \MaG$ such that
$$\rmax(X,\md_0)=r_G(\delta)?$$
\end{question}
Notice that the answer to question \ref{Q:weakConj} is   positive in both our counterexamples.  For Example \ref{wbinex} we take $\md_0=(3,4)+\mt_{v_1}=(3-k,4+k)$, where $k:=v_1\cdot v_2>12$;  for Example \ref{counterex1} we can take  $\md_0=(1,2,3)+\mt_{v_1}=(-2,6,2)$.

\
 
\noindent
 Lucia Caporaso - caporaso@mat.uniroma3.it

 \noindent
 Dipartimento di Matematica e Fisica, Universit\`a Roma Tre  
 
  \noindent
  Largo San L. Murialdo 1 - 00146 Rome (Italy)

 \
 
 \noindent
 Yoav Len - yoav.len@yale.edu 
 
 \noindent
 Mathematics Department, Yale University
 
  \noindent
10 Hillhouse Ave, New Haven, CT, 06511 (USA)

 \

\noindent
 Margarida Melo -  melo@mat.uniroma3.it
 
  \noindent
 Dipartimento di Matematica e Fisica, Universit\`a Roma Tre  
 
  \noindent
  Largo San L. Murialdo 1 - 00146 Rome (Italy) 
 
and 

 \noindent
 CMUC and Mathematics Department of the University of Coimbra
 
  \noindent Apartado 3008, EC Santa Cruz
3001 -  501 Coimbra (Portugal)

\end{document}